\newcommand{\R}{\mathbb{R}}
\newcommand{\F}{\mathscr{F}}
\newcommand{\G}{\mathcal{G}}
\newcommand{\Q}{\mathbb{Q}}
\newcommand{\Z}{\mathbb{Z}}
\newcommand{\T}{\mathscr{T}}
\newcommand{\LP}{\operatorname{L}}
\declaretheorem{theorem}
\declaretheorem[sibling=theorem]{lemma}
\declaretheorem[sibling=theorem]{proposition}
\declaretheorem[sibling=theorem]{corollary}
\newcommand{\Image}{\operatorname{Im}}
\renewcommand{\ker}{\operatorname{Ker}}
\newcommand{\Id}{\operatorname{Id}}
\newcommand{\tr}{\operatorname{tr}}
\newcommand{\E}{\mathbb{E}}
\newcommand{\prob}{\mathbb{P}}
\renewcommand{\Pr}{\prob}
\DeclareDocumentCommand \one { o }
{%
\IfNoValueTF {#1}
{\mathbf{1}  }
{\mathbf{1}\{{#1}\} }%
}
\newcommand{\rank}{\operatorname{rank}}
\newcommand{\Bernoulli}{\operatorname{Bernoulli}}
\newcommand{\Binomial}{\operatorname{Binom}}
\newcommand{\lawequals}{\overset{\mathscr{L}}{=}}
\DeclareDocumentCommand{\Prto} {o} {
\IfNoValueTF {#1}
 {\overset{\Pr}{\longrightarrow}}
 { \xrightarrow[ #1 \to \infty]{\Pr }}
}
\DeclareDocumentCommand{\Asto} {o} {
\IfNoValueTF {#1}
 {\overset{\operatorname{a.s.}}{\longrightarrow}}
 {
 \xrightarrow[ #1 \to \infty]{\operatorname{a.s.} }
 }
}
\DeclareDocumentCommand{\Mgfto} {o} {
\IfNoValueTF {#1}
{\overset{\operatorname{mgf}}{\longrightarrow}}
{ \xrightarrow[ #1 \to \infty]{\operatorname{mgf} }}
}
\DeclareDocumentCommand{\Wkto} {o} {
\IfNoValueTF {#1}
 {\overset{(d)}{\longrightarrow}}
 { \xrightarrow[ #1 \to \infty]{(d) }}
}
\DeclareDocumentCommand \LPto { O{1} }
{\overset{\operatorname{\LP^{#1}}}{\longrightarrow}} 
\title{Simplex links in determinantal hypertrees} 
\author{Andrew Vander Werf \thanks{The author gratefully acknowledges partial support from NSF-DMS \# 1547357.}
}
\date{\today}
\begin{document}

\maketitle

\begin{abstract}
We deduce a structurally inductive description of the determinantal probability measure associated with Kalai's celebrated enumeration result for higher--dimensional spanning trees of the $n-1$--simplex. As a consequence, we derive the marginal distributions of the simplex links in such random trees. Along the way, we also characterize the higher--dimensional spanning trees of every other simplicial cone in terms of the higher--dimensional rooted forests of the underlying simplicial complex. We also apply these new results to random topology, the spectral analysis of random graphs, and the theory of high dimensional expanders. One particularly interesting corollary of these results is that the fundamental group of a union of $o(\log n)$ determinantal 2--trees has Kazhdan's property (T) with high probability. 
\end{abstract}

\section{Introduction}
 
Forty years ago, Kalai \cite{Kalai} introduced, to spectacular effect, a generalization of the graph--theoretic notion of a tree to higher--dimensional simplicial complexes, called \emph{$\Q$--acyclic simplicial complexes} for the triviality of their rational reduced homology groups in every dimension. However, recent authors \cite{LP}, \cite{KN}, \cite{Meszaros} appear to have settled on simply calling these \emph{hypertrees}. For $0\leq k<n$, let $\mathscr{T}_{n,k}$ denote the set of $k$--dimensional hypertrees on the vertex set $[n]:=\{1,2,\dots n\}$. Kalai noticed, among other things, that $\tilde{H}_{k-1}(T)$ (assume integer coefficients throughout) is a finite group for all $T\in\T_{n,k}$ and moreover that $$\sum_{T\in\mathscr{T}_{n,k}}|\tilde{H}_{k-1}(T)|^2=n^{{n-2}\choose k},$$
which is seen to be a generalization of Caley's formula by recalling that $|\tilde H_0(T)|=1$ for all $T\in\mathscr{T}_{n,1}$, due to trees being connected. This suggests a natural probability measure \cite{Lyons}, \cite{Lyons2} $\nu=\nu_{n,k}$, on $\mathscr{T}_{n,k}$ defined on atoms by $\nu_{n,k}(T)=n^{-{{n-2}\choose k}}|\tilde H_{k-1}(T)|^2$. 

Seemingly unrelated to this measure, consider the $k$--dimensional \emph{Linial--Meshulam complex}, denoted $\mathcal{Y}_k(n,p)$ and defined \cite{HomCon}, \cite{MeshWall} to be the random $k$--dimensional simplicial complex on $[n]$ with full $k-1$--skeleton wherein each $k$--face is included independently and with probability $p$. Let $\mu_{n,k}$ denote the probability density for $\mathcal{Y}_k(n,(n+1)^{-1})$. 

Let $\mathcal{T}_{n,k}$ denote a random complex distributed according to $\nu_{n,k}$, and let $\mathcal{Y}_{n,k}$ denote a random complex distributed according to $\mu_{n,k}$. Our main result is the following structure theorem for random hypertrees distributed according to $\nu_{n,k}$ which, for the special case of $k$--dimensional spanning trees of the $n-1$--simplex, answers in the affirmative a question posed by Lyons (\cite{Lyons}, Question 10.1) concerning the existence of natural disjoint--union couplings of certain determinantal measures:  \begin{theorem}\label{main}
Assume that $1\leq k<n-1$. There exists a coupling of $\mathcal{T}_{n,k}$, $\mathcal{T}_{n-1,k}$, $\mathcal{T}_{n-1,k-1}$, $\mathcal{Y}_{n-1,k}$, $\mathcal{Y}_{n-1,k-1}$ such that $\mathcal{T}_{n-1,k}$ and $\mathcal{T}_{n-1,k-1}$ are independent of $\mathcal{Y}_{n-1,k}$ and $\mathcal{Y}_{n-1,k-1}$ respectively, $\mathcal{T}_{n-1,k}$ and $\mathcal{T}_{n-1,k-1}$ are conditionally independent given $\mathcal{T}_{n,k}$, and $$\mathcal{T}_{n,k}=\operatorname{Cone}(n,\mathcal{T}_{n-1,k-1}\cup\mathcal{Y}_{n-1,k-1})\bigcup\mathcal{T}_{n-1,k}\setminus\mathcal{Y}_{n-1,k}.$$ 
\end{theorem}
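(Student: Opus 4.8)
The plan is to realise $\nu_{n,k}$ as a determinantal probability measure, use the cone structure $\Delta_{n-1}=n\ast\Delta_{n-2}$ to bring the simplicial boundary operator into a mapping--cone normal form, and then read the link and the deletion of the vertex $n$ directly off the block decomposition of the associated projection kernel. Concretely, recall that for each $T\in\mathscr{T}_{n,k}$ one has $|\tilde H_{k-1}(T)|^2=\det\!\big((\partial_k)_{\cdot,T}^{\top}(\partial_k)_{\cdot,T}\big)$ (a Cauchy--Binet reformulation of \cite{Kalai}), where $\partial_k$ is the $k$-th simplicial boundary operator of $\Delta_{n-1}$, with rows indexed by $\binom{[n]}{k}$ and columns by the ground set $E:=\binom{[n]}{k+1}$; hence $\nu_{n,k}$ is exactly the determinantal probability measure $\mathsf{D}(\mathcal{H}_{n,k})$ on $E$ attached, in the sense of \cite{Lyons}, to the subspace $\mathcal{H}_{n,k}\subseteq\ell^2(E)$ that is the row space of $\partial_k$, and $\mathcal{T}_{n,k}$ is a.s.\ a basis of the column matroid of $\partial_k$, with $|\mathcal{T}_{n,k}|=\binom{n-1}{k}=\dim\mathcal{H}_{n,k}$. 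Now split $E=E_1\sqcup E_2$, with $E_1$ (resp.\ $E_2$) the $k$-faces that contain (resp.\ avoid) $n$; the bijections $\sigma\cup\{n\}\leftrightarrow\sigma$, $\tau\leftrightarrow\tau$ identify $E_1$ with the $(k-1)$-faces of $\Delta_{n-2}$ and $E_2$ with the $k$-faces of $\Delta_{n-2}$. Splitting the rows of $\partial_k$ the same way yields the mapping--cone block form
\[
\partial_k \;=\;
\begin{pmatrix}
\partial^{(n-2)}_{k-1} & 0\\[3pt]
D & \partial^{(n-2)}_{k}
\end{pmatrix},
\]
with $D$ a diagonal $\pm1$ matrix and the superscript denoting the corresponding operator of $\Delta_{n-2}$. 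Passing to row spans, using that coboundaries are cocycles, and absorbing signs into a harmless diagonal sign change of coordinates, this collapses to the clean statement that $\mathcal{H}_{n,k}$ is the \emph{graph} of the $(k-1)$-st simplicial coboundary operator $\delta:=\delta^{(n-2)}_{k-1}\colon\ell^2(E_1)\to\ell^2(E_2)$ of $\Delta_{n-2}$, i.e.\ $\mathcal{H}_{n,k}=\{(x,\delta x):x\in\ell^2(E_1)\}$.

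The heart of the matter is a kernel computation. Let $P=\Pi_{\mathcal{H}_{n,k}}$ be the orthogonal projection onto $\mathcal{H}_{n,k}$; the standard formula for projecting onto a graph gives, as operators on $\ell^2(E_1)$ and $\ell^2(E_2)$,
\[
P_{E_1\times E_1}=(I+\delta^*\delta)^{-1},\qquad P_{E_2\times E_2}=I-(I+\delta\delta^*)^{-1}.
\]
Since $\Delta_{n-2}$ is a full simplex on $n-1$ vertices, every reduced combinatorial Laplacian of $\Delta_{n-2}$ equals $(n-1)I$, so $\delta^*\delta$ (resp.\ $\delta\delta^*$) acts as $0$ on its kernel, which is $B^{k-1}(\Delta_{n-2})=\ker\delta$ (resp.\ $Z_k(\Delta_{n-2})=\ker\delta^*$; acyclicity of $\Delta_{n-2}$ is used for the first), and as $(n-1)I$ on the orthogonal complement. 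As $(1+(n-1))^{-1}=1/n$ and the orthogonal complement of $Z_k(\Delta_{n-2})$ is $B^{k}(\Delta_{n-2})$, this gives
\[
P_{E_1\times E_1}=\tfrac1n\,I+\tfrac{n-1}{n}\,\Pi_{\mathcal{H}_{n-1,k-1}},\qquad
P_{E_2\times E_2}=\tfrac{n-1}{n}\,\Pi_{\mathcal{H}_{n-1,k}},
\]
once one observes that $B^{k-1}(\Delta_{n-2})$ and $B^{k}(\Delta_{n-2})$ are, by definition, the row spaces of $\partial^{(n-2)}_{k-1}$ and $\partial^{(n-2)}_{k}$, i.e.\ the subspaces $\mathcal{H}_{n-1,k-1}$ and $\mathcal{H}_{n-1,k}$ whose determinantal measures are $\nu_{n-1,k-1}$ and $\nu_{n-1,k}$. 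Now invoke three soft facts: (i) the restriction of a determinantal process to a subset of the ground set is determinantal, with the corresponding principal submatrix as kernel; (ii) thinning a determinantal process by independent $\Bernoulli(q)$ coins multiplies its kernel by $q$; (iii) complementation sends the kernel $K$ to $I-K$. From (i)--(ii), the marginal law of the deletion $\mathcal{T}_{n,k}\cap E_2$ equals that of $\mathcal{T}_{n-1,k}\setminus\mathcal{Y}_{n-1,k}$. From (ii)--(iii), a determinantal process with kernel $pI+(1-p)\Pi_{\mathcal{K}}$ has the law of a $\mathsf{D}(\mathcal{K})$-sample together with an independent $\Bernoulli(p)$ subset of the entire ground set (its complement has kernel $(1-p)(I-\Pi_{\mathcal{K}})$, a thinning of $\mathsf{D}(\mathcal{K}^{\perp})$); with (i) and $p=1/n$ this identifies the marginal law of the link $\mathcal{T}_{n,k}\cap E_1$ with that of $\mathcal{T}_{n-1,k-1}\cup\mathcal{Y}_{n-1,k-1}$ --- which is therefore the marginal distribution of the simplex link --- with $\mathcal{T}_{n-1,k-1}$ independent of the $\Bernoulli(1/n)$ Linial--Meshulam complex $\mathcal{Y}_{n-1,k-1}$, and likewise $\mathcal{T}_{n-1,k}$ independent of $\mathcal{Y}_{n-1,k}$.

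It remains to glue these into one coupling. Starting from $\mathcal{T}_{n,k}\sim\nu_{n,k}$, set $S_1:=\mathcal{T}_{n,k}\cap E_1$ and $S_2:=\mathcal{T}_{n,k}\cap E_2$, so that $\mathcal{T}_{n,k}=\operatorname{Cone}(n,S_1)\cup S_2$ tautologically. Fix a regular conditional law of the pair $(\mathcal{T}_{n-1,k-1},\mathcal{Y}_{n-1,k-1})$ given its union and of the pair $(\mathcal{T}_{n-1,k},\mathcal{Y}_{n-1,k})$ given $\mathcal{T}_{n-1,k}\setminus\mathcal{Y}_{n-1,k}$ (these exist by the two distributional identities above); then, conditionally on $\mathcal{T}_{n,k}$, sample $(\mathcal{T}_{n-1,k-1},\mathcal{Y}_{n-1,k-1})$ from the first conditional law evaluated at $S_1$ and $(\mathcal{T}_{n-1,k},\mathcal{Y}_{n-1,k})$ from the second evaluated at $S_2$, independently of one another. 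By construction $S_1=\mathcal{T}_{n-1,k-1}\cup\mathcal{Y}_{n-1,k-1}$ and $S_2=\mathcal{T}_{n-1,k}\setminus\mathcal{Y}_{n-1,k}$, which is the displayed identity; disintegrating over $S_1$ and over $S_2$ recovers the asserted marginal joint laws, in particular $\mathcal{T}_{n-1,k-1}\perp\mathcal{Y}_{n-1,k-1}$ and $\mathcal{T}_{n-1,k}\perp\mathcal{Y}_{n-1,k}$; and the independence imposed in the last sampling step is precisely the conditional independence of $\mathcal{T}_{n-1,k}$ and $\mathcal{T}_{n-1,k-1}$ given $\mathcal{T}_{n,k}$.

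I expect the genuine obstacle to be the linear--algebraic core: getting the mapping--cone normal form of $\partial_k$ and the ensuing identification of $\mathcal{H}_{n,k}$ with the coboundary graph exactly right (orientations, and the precise realisations of $\mathcal{H}_{n-1,k}$ and $\mathcal{H}_{n-1,k-1}$ as coboundary spaces of $\Delta_{n-2}$), together with the spectral computation forcing the retention probability to equal $1/n$ --- this last being where the a priori mysterious appearance of Linial--Meshulam complexes at parameter $1/n$ is accounted for by the spectral gap of the simplex. A secondary subtlety is that the $\Bernoulli(1/n)$ complex in the link lives on \emph{all} $(k-1)$-faces of $[n-1]$, not merely those outside a tree, which is exactly what the identity $\mathsf{D}(pI+(1-p)\Pi_{\mathcal{K}})=\mathsf{D}(\mathcal{K})\cup\Bernoulli(p)$ delivers. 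Everything downstream --- restriction, complementation, thinning, and the disintegration producing the coupling --- is soft; the companion characterisation of the trees of a general simplicial cone via rooted forests of the base, advertised in the abstract, is a combinatorial shadow of this same decomposition and could anchor an alternative, more hands-on proof.
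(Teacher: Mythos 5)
Your proposal is correct in outline, and its key computations check out, but it reaches the theorem by a route that differs in substance from the paper's. The paper works with densities: via Cauchy--Binet, Lemma \ref{ABD}, Lemma \ref{bridge} and Corollaries \ref{split}--\ref{newdef} it proves the pointwise identities of Lemma \ref{incr}, which say exactly that $\operatorname{Proj}(n,\mathcal{T}_{n,k})$ has the density of a $\Bernoulli(1/n)$--thinned $\mathcal{T}_{n-1,k}$ and $\operatorname{Link}(n,\mathcal{T}_{n,k})$ that of a $\Bernoulli(1/n)$--augmented $\mathcal{T}_{n-1,k-1}$; it then writes explicit coupling densities $\pi_{n,k},\lambda_{n,k}$ and glues them by the density $\pi_{n,k}\lambda_{n,k}/\nu_{n,k}$ --- which is the same conditional--independence gluing as your regular--conditional--distribution step, so that part is morally identical. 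Where you genuinely diverge is the middle: your graph--of--$\delta$ normal form plus the spectral gap of the simplex re-derives, in effect, M\'esz\'aros's formula $P_{n,k}=\frac1n\partial^{[n]t}_k\partial^{[n]}_k$ together with Lemma \ref{Hodgey}, restricted to the two diagonal blocks ($P_{E_1,E_1}=\frac1n\Id+\frac{n-1}{n}P_{n-1,k-1}$ and $P_{E_2,E_2}=\frac{n-1}{n}P_{n-1,k}$ do agree with the paper's kernels, and the $(-1)^k\Id$ block and sign issue are indeed harmless since only principal minors matter); you then identify the marginal laws wholesale through closure of determinantal processes under restriction, complementation and Bernoulli thinning, in particular the identity that kernel $p\Id+(1-p)\Pi$ is an independent $\Bernoulli(p)$ union with the projection process. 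This avoids the rooted--forest/Cauchy--Binet bookkeeping entirely, and it makes the appearance of the parameter $1/n$ transparent, but it yields only distributional identities, whereas the paper's determinant computations also produce the pointwise product formulas (Corollaries \ref{split}, \ref{newdef} and the corollary following it) and the general--cone Theorem \ref{submain}, which your argument does not recover (as you acknowledge). One small inaccuracy to fix: $|\tilde H_{k-1}(T)|^2$ is not $\det\big(\partial_{\bullet,T}^t\partial_{\bullet,T}\big)$; by Cauchy--Binet, Lemma \ref{ABD} and Corollary \ref{split} the latter equals $|\tilde H_{k-1}(T)|^2\,n^{\binom{n-2}{k-1}}$, so the two differ only by a $T$--independent constant and your real claim --- that $\nu_{n,k}$ is the determinantal probability measure of the row space of $\partial_k$, equivalently of $\widehat\partial$ as in the paper's display (\ref{alt}) --- stands.
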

We can easily identify this coned term as being the link of the vertex $n$ in $\mathcal{T}_{n,k}$. So, with this coupling, we have
$\operatorname{Link}(n,\mathcal{T}_{n,k})\lawequals\mathcal{T}_{n-1,k-1}\cup\mathcal{Y}_{n-1,k-1}$. This can be taken to mean that a vertex link in $\mathcal{T}_{n,k}$
can be simulated by first sampling $\mathcal{T}_{n-1,k-1}$ and then adding each missing $k-1$--face independently with probability $1/n$. The remaining term in the displayed union gives a description of those $k$--faces in $\mathcal{T}_{n,k}$
which do not contain the vertex $n$. This set of faces can be simulated by sampling $\mathcal{T}_{n-1,k}$ and then deleting each of its $k$--faces independently with probability $1/n$. 
Note then that these two binomial processes, one of adding $k-1$--faces which are then coned with $n$ and one of deleting $k$--faces, must be correlated at least to the point of producing the same number of faces---this is because all hypertrees of a given dimension and vertex count have the same number of top--dimensional faces (\cite{Kalai}, Proposition 2).

This simple idea of decomposing a hypertree into two collections of faces---those which contain a designated vertex and those which do not---turns out to be quite powerful. However, the idea is nothing new. For example, we can see this decomposition in use by Linial and Peled \cite{LP} 
at Kalai's suggestion 
to inductively construct collapsible hypertrees. It was noted there that if $Y\in\mathscr{T}_{n-1,k-1}$, $X\in\mathscr{T}_{n-1,k}$, and both are collapsible, then $X\cup\operatorname{Cone}(n,Y)\in\mathscr{T}_{n,k}$ and is collapsible. Corollary \ref{split} shows that this construction produces a hypertree whether or not $X$ and $Y$ are collapsible. 

By iterating our formula for the law of a vertex link, we can also determine a similar expression for the law of the link of a simplex of arbitrary dimension. 
\begin{theorem}\label{Link}
The link of the simplex $\{n-j+1,\dots, n\}$ in $\mathcal{T}_{n,k}$ is equivalent in law to 
$\mathcal{T}_{n-j,k-j}\cup\mathcal{Y}_{k-j}(n-j,j/n)$
where these two $k-j$--dimensional complexes are independent. 
\end{theorem}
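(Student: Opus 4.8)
The plan is to prove Theorem~\ref{Link} by induction on $j$, with Theorem~\ref{main} as the driving input together with four elementary facts. Two are purely simplicial: for a complex $Z$, a vertex $v$, and a face $\tau\not\ni v$ one has the \emph{associativity of links}
\[
\operatorname{Link}(\tau\cup\{v\},Z)=\operatorname{Link}\bigl(v,\operatorname{Link}(\tau,Z)\bigr),
\]
and for any two complexes $A,B$ on a common vertex set, $\operatorname{Link}(v,A\cup B)=\operatorname{Link}(v,A)\cup\operatorname{Link}(v,B)$; both are immediate from the definition of the link, and since $\operatorname{Link}(v,-)$ is a deterministic map it preserves equality in law and carries independent arguments to independent outputs. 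Two more concern the Linial--Meshulam model: linking at any vertex $v$ deletes $v$ and drops the dimension, since every face of $\operatorname{Link}(v,\mathcal{Y}_\ell(m,p))$ of dimension $\le\ell-2$ is present while a given $(\ell-1)$-face $\rho$ lies in it precisely when $\rho\cup\{v\}$ was sampled, independently with probability $p$, whence $\operatorname{Link}(v,\mathcal{Y}_\ell(m,p))\lawequals\mathcal{Y}_{\ell-1}(m-1,p)$; and the union of independent copies $\mathcal{Y}_\ell(m,p_1),\mathcal{Y}_\ell(m,p_2)$ keeps a top face exactly when at least one does, so it is distributed as $\mathcal{Y}_\ell\bigl(m,\,1-(1-p_1)(1-p_2)\bigr)$.

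The base case $j=1$ is precisely the reading of Theorem~\ref{main} recorded in the discussion following it: $\operatorname{Link}(\{n\},\mathcal{T}_{n,k})\lawequals\mathcal{T}_{n-1,k-1}\cup\mathcal{Y}_{k-1}(n-1,1/n)$ with independent summands, and $1/n=j/n$, $n-1=n-j$, $k-1=k-j$. Now assume the statement for $j$. By associativity of links, $\operatorname{Link}(\{n-j,\dots,n\},\mathcal{T}_{n,k})=\operatorname{Link}\bigl(n-j,\operatorname{Link}(\{n-j+1,\dots,n\},\mathcal{T}_{n,k})\bigr)$; applying the inductive hypothesis inside and then distributing the link over the union, this is equal in law to $\operatorname{Link}(n-j,\mathcal{T}_{n-j,k-j})\cup\operatorname{Link}(n-j,\mathcal{Y}_{k-j}(n-j,j/n))$ with the two summands independent. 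The vertex $n-j$ is the top vertex of $\mathcal{T}_{n-j,k-j}$, so Theorem~\ref{main} gives $\operatorname{Link}(n-j,\mathcal{T}_{n-j,k-j})\lawequals\mathcal{T}_{n-j-1,k-j-1}\cup\mathcal{Y}_{k-j-1}\bigl(n-j-1,1/(n-j)\bigr)$ with independent summands, while the Linial--Meshulam fact gives $\operatorname{Link}(n-j,\mathcal{Y}_{k-j}(n-j,j/n))\lawequals\mathcal{Y}_{k-j-1}(n-j-1,j/n)$. Realizing these three now mutually independent complexes on one probability space and merging the two Linial--Meshulam layers yields $\mathcal{T}_{n-j-1,k-j-1}\cup\mathcal{Y}_{k-j-1}(n-j-1,p')$ with independent summands, where
\[
p'=1-\Bigl(1-\tfrac1{n-j}\Bigr)\Bigl(1-\tfrac jn\Bigr)=1-\tfrac{n-j-1}{n}=\tfrac{j+1}{n}.
\]
Since $n-j-1=n-(j+1)$ and $k-j-1=k-(j+1)$, this is exactly the assertion for $j+1$, completing the induction.

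Once Theorem~\ref{main} is in hand the argument is essentially bookkeeping, so I expect the only delicate points to be clerical: keeping track that the relevant $\sigma$-algebras stay independent as links are peeled off and then re-expanded via Theorem~\ref{main}, verifying the arithmetic $\bigl(1-\tfrac1{n-j}\bigr)\bigl(1-\tfrac jn\bigr)=\tfrac{n-j-1}{n}$ that telescopes the probabilities into $(j+1)/n$, and noting the range of validity: the induction invokes Theorem~\ref{main} at parameters $(n-j,k-j)$, which needs $1\le k-j$ alongside the standing hypothesis $k<n-1$, so the description holds for $1\le j\le k$, with the degenerate endpoint $j=k$ (where the hypertree factor $\mathcal{T}_{n-k,0}$ is a single vertex) checked directly.
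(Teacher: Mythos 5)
Your proof is correct and follows essentially the same route as the paper: induction on $j$ using Theorem \ref{main} as base case and inductive input, distributing the link over the union, reducing the link of the Linial--Meshulam layer to a lower-dimensional Linial--Meshulam complex, and merging the two independent binomial layers into one with parameter $(j+1)/n$. Your version merely makes explicit the elementary link/union facts and the independence bookkeeping that the paper's displayed computation leaves implicit.
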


\begin{proof}
We prove this by induction. The base case follows as discussed from Theorem \ref{main}. Inducting, we have 
\begin{align*}
\operatorname{Link}(\{n-j,\dots,n\},\mathcal{T}_{n,k})&\lawequals\operatorname{Link}(n-j,\mathcal{T}_{n-j,k-j}\cup\mathcal{Y}_{k-j}(n-j,j/n)) \\ 
&=\operatorname{Link}(n-j,\mathcal{T}_{n-j,k-j})\cup\operatorname{Link}(n-j,\mathcal{Y}_{k-j}(n-j,j/n)) \\ 
&\lawequals\mathcal{T}_{n-j-1,k-j-1}\cup\mathcal{Y}_{k-j-1}(n-j-1,1/(n-j))\cup\mathcal{Y}_{k-j-1}(n-j-1,j/n)) \\ 
&\lawequals\mathcal{T}_{n-j-1,k-j-1}\cup\mathcal{Y}_{k-j-1}\left(n-j-1,\frac{1}{n-j}+\frac{j}{n}-\frac{j}{n(n-j)}\right) \\ 
&=\mathcal{T}_{n-j-1,k-j-1}\cup\mathcal{Y}_{k-j-1}\left(n-j-1,\frac{j+1}{n}\right). 
\end{align*}
\end{proof}


\subsection{Applications to random topology}

Theorem \ref{Link} has several applications to random topology. Indeed, Garland's method \cite{Garland} and its refinements (see \cite{Zuk1}, \cite{Zuk2}, \cite{Opp1}, \cite{Opp2}), \.Zuk's criterion among them, have proven to be very effective tools for extracting global information about a pure $k$--dimensional simplicial complex using only information found in the $k-2$--dimensional links of the complex. 
\begin{theorem}[Garland's method]
Let $X$ be a pure $k$--dimensional simplicial complex. Suppose that, for all $(k-2)$--faces $\tau\in X$, we have  $\lambda^{(0)}(\operatorname{Link}(\tau,X))\geq1-\varepsilon>0$. Then $\lambda^{(k-1)}(X)\geq1-k\varepsilon.$
\end{theorem}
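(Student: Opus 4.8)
Equip $X$ with the balanced weight function: every $k$-face gets weight $1$ and every $(k-1)$-face $\sigma$ gets weight $w(\sigma)=\#\{k\text{-faces of }X\text{ containing }\sigma\}$ (only these two levels matter). Then the normalized upper Laplacian $\Delta^{+}_{k-1}=\delta_{k-1}^{*}\delta_{k-1}$ on $C^{k-1}(X;\R)$ and, for each $(k-2)$-face $\tau$, the normalized graph Laplacian of $\operatorname{lk}\tau:=\operatorname{Link}(\tau,X)$ cohere; recall that $\lambda^{(k-1)}(X)=\min\{\|\delta_{k-1}f\|^{2}/\|f\|^{2}:0\neq f\in C^{k-1}(X;\R),\ \delta_{k-2}^{*}f=0\}$ is the reduced spectral gap (positive exactly when $H^{k-1}(X;\R)=0$), and $\lambda^{(0)}(\operatorname{lk}\tau)$ is the spectral gap of the graph $\operatorname{lk}\tau$, which has no isolated vertices by purity and is connected since $\lambda^{(0)}(\operatorname{lk}\tau)>0$. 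Thus the theorem amounts to the inequality
$$\|\delta_{k-1}f\|^{2}\ \ge\ (1-k\varepsilon)\,\|f\|^{2}\qquad\text{for all }f\in C^{k-1}(X;\R)\text{ with }\delta_{k-2}^{*}f=0 .$$
(If $k=1$ this is vacuous: $\operatorname{lk}(\emptyset)=X$ and the statement repeats the hypothesis.)

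\textbf{Localization and the two identities.} For $f\in C^{k-1}(X)$ and $\tau\in X^{(k-2)}$ define the localization $f_\tau\in C^{0}(\operatorname{lk}\tau)$ by $f_\tau(v)=\pm f(\tau\cup\{v\})$ with the standard sign convention. The proof rests on two identities, each obtained by summing a purely local identity over the $k$-faces of $X$:
$$\sum_{\tau\in X^{(k-2)}}\|f_\tau\|^{2}_{\operatorname{lk}\tau}=k\,\|f\|^{2},\qquad \sum_{\tau\in X^{(k-2)}}\|\delta_0 f_\tau\|^{2}_{\operatorname{lk}\tau}=\|\delta_{k-1}f\|^{2}+(k-1)\,\|f\|^{2},$$
where $\|f_\tau\|^{2}_{\operatorname{lk}\tau}$ is the degree-weighted $C^{0}$-norm and $\|\delta_0 f_\tau\|^{2}_{\operatorname{lk}\tau}=\sum_{e\in E(\operatorname{lk}\tau)}\delta_0 f_\tau(e)^{2}$. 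The first is bookkeeping: each $(k-1)$-face contains exactly $k$ faces of dimension $k-2$, and $\deg_{\operatorname{lk}\tau}(v)=w(\tau\cup\{v\})$. For the second (Garland's identity) fix a $k$-simplex $\eta$ and set $g_w:=\pm f(\eta\setminus\{w\})$ for $w\in\eta$, with signs chosen so that $\delta_{k-1}f(\eta)=\sum_{w\in\eta}g_w$; the sign conventions can be arranged so that $\delta_0 f_\tau(\eta\setminus\tau)=\pm(g_u+g_v)$ whenever $\{u,v\}=\eta\setminus\tau$. Summing squares over the $\binom{k+1}{2}$ faces $\tau\subset\eta$ then gives
$$\sum_{\tau\subset\eta}(g_u+g_v)^{2}=k\sum_{w\in\eta}g_w^{2}+\Big(\sum_{w\in\eta}g_w\Big)^{2}-\sum_{w\in\eta}g_w^{2}=(k-1)\sum_{\sigma\subset\eta}f(\sigma)^{2}+\big(\delta_{k-1}f(\eta)\big)^{2},$$
and summing over $\eta\in X^{(k)}$ produces the stated identity.

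\textbf{Finishing the argument.} The condition $\delta_{k-2}^{*}f=0$ says precisely that each $f_\tau$ is orthogonal to the constants on $\operatorname{lk}\tau$: one checks that $(\delta_{k-2}^{*}f)(\tau)$ is a positive multiple of $\langle f_\tau,\mathbf 1\rangle_{\operatorname{lk}\tau}$. Hence the spectral-gap hypothesis applies to every localization, $\|\delta_0 f_\tau\|^{2}_{\operatorname{lk}\tau}\ge\lambda^{(0)}(\operatorname{lk}\tau)\,\|f_\tau\|^{2}_{\operatorname{lk}\tau}\ge(1-\varepsilon)\,\|f_\tau\|^{2}_{\operatorname{lk}\tau}$, and summing over $\tau$ and invoking the two identities gives
$$\|\delta_{k-1}f\|^{2}+(k-1)\|f\|^{2}=\sum_{\tau}\|\delta_0 f_\tau\|^{2}_{\operatorname{lk}\tau}\ \ge\ (1-\varepsilon)\sum_{\tau}\|f_\tau\|^{2}_{\operatorname{lk}\tau}=k(1-\varepsilon)\|f\|^{2},$$
so $\|\delta_{k-1}f\|^{2}\ge\big(k(1-\varepsilon)-(k-1)\big)\|f\|^{2}=(1-k\varepsilon)\|f\|^{2}$, as required.

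\textbf{Main obstacle.} Every step above is linear algebra or weight-counting except the orientation bookkeeping inside Garland's identity: one must choose the signs in the localizations $f_\tau$ and in the $g_w$ so that $\delta_0 f_\tau(\eta\setminus\tau)=\pm(g_u+g_v)$ and so that, upon expanding, the cross terms reassemble into exactly $(\delta_{k-1}f(\eta))^{2}$ rather than some other quadratic form in the $f(\sigma)$. This local identity on a single $k$-simplex is the one place requiring genuine care; with it established, the global identities and the theorem follow mechanically.
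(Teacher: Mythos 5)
The paper does not prove this statement: it quotes Garland's method as a known theorem, citing Garland and its refinements (\.Zuk, Oppenheim, etc.), so there is no internal proof to compare yours against. Your argument is the standard localization proof of Garland's method (in the Ballmann--\'Swi\k{a}tkowski/Oppenheim normalization), and it is correct: with the weights $w(\eta)=1$ on $k$--faces and $w(\sigma)=\#\{\eta\supset\sigma\}$ on $(k-1)$--faces, your two summation identities check out (each $(k-1)$--face contains $k$ faces of dimension $k-2$, and $\deg_{\operatorname{Link}(\tau,X)}(v)=w(\tau\cup\{v\})$ gives the first; the single--simplex computation $\sum_{\{u,v\}\subset\eta}(g_u+g_v)^2=(k-1)\sum_w g_w^2+(\sum_w g_w)^2$ gives the second), the identification of $\delta_{k-2}^{*}f=0$ with orthogonality of each $f_\tau$ to constants in the degree--weighted inner product is the standard fact and does not depend on the weight chosen at level $k-2$, and the sign bookkeeping you flag as the ``main obstacle'' is routine with the convention $g_{w_i}=(-1)^i f(\eta\setminus\{w_i\})$. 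One small remark on matching the paper's statement: the paper defines $\lambda^{(k-1)}(X)$ as the smallest \emph{nonzero} eigenvalue of the weighted up--down Laplacian, whereas you bound the Rayleigh quotient over all of $\ker\delta_{k-2}^{*}$; since every eigenvector with nonzero eigenvalue is orthogonal to $\ker(\delta_{k-1}^{*}\delta_{k-1})\supseteq\operatorname{Im}\delta_{k-2}$ and hence lies in $\ker\delta_{k-2}^{*}$, your inequality implies the paper's bound, and in fact yields the stronger conclusion that $H^{k-1}(X;\R)=0$ when $\varepsilon<1/k$, which is the form in which Garland's method is usually applied.
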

\begin{theorem}[\.Zuk's criterion]
Let $X$ be a pure $2$--dimensional simplicial complex. Suppose that, for all $0$--faces $\tau\in X$, we have that $\lambda^{(0)}(\operatorname{Link}(\tau,X))>1/2$ and $\operatorname{Link}(\tau,X)$ is connected. Then the fundamental group $\pi_1(X)$ has Kazhdan's property $(T)$.
\end{theorem}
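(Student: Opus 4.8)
The plan is to deduce Kazhdan's property (T) from the vanishing of the reduced first cohomology of $\Gamma:=\pi_1(X)$ with coefficients in every unitary representation. Assume $X$ is finite and connected, so that $\Gamma$ is finitely generated. By the cohomological characterization of property (T) (Delorme--Guichardet, together with Shalom's refinement via reduced cohomology), $\Gamma$ has property (T) if and only if $\overline H^1(\Gamma,\pi)=0$ for every unitary representation $(\pi,\mathcal H)$ of $\Gamma$. So it suffices to prove, for each such $\pi$, that $X$ carries no nonzero $\pi$--twisted harmonic $1$--cochain.

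First I would set up the cohomological framework. Because $H^1$ of a space with a local coefficient system depends only on its fundamental group, $\overline H^1(\Gamma,\pi)=\overline H^1\!\big(X;\mathcal H_\pi\big)$, where $\mathcal H_\pi$ is the local system on $X$ attached to $\pi$. Since $X$ is finite, each simplicial cochain space $C^i(X;\mathcal H_\pi)$ for $i=0,1,2$ is a (possibly infinite--dimensional) Hilbert space -- a finite direct sum of copies of $\mathcal H$ -- and I endow it with the standard Garland weighting, in which an $i$--face is weighted by the number of $2$--faces containing it. The coboundary maps $d_0,d_1$ are then bounded operators between Hilbert spaces, the Hodge decomposition applies, and $\overline H^1(X;\mathcal H_\pi)$ is represented by the kernel of the degree--$1$ Laplacian $\Delta^{(1)}=d_0d_0^{*}+d_1^{*}d_1$. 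Thus the goal becomes $\ker\Delta^{(1)}=\{0\}$ for every $\pi$.

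The heart of the matter is Garland's local--to--global estimate applied to this twisted complex. For each vertex $\tau\in X$ the closed star of $\tau$ is contractible, so $\mathcal H_\pi$ is canonically trivial on it; a $1$--cochain $f$ therefore restricts over the edges through $\tau$ to an ordinary $\mathcal H$--valued $0$--cochain $f_\tau$ on the graph $\operatorname{Link}(\tau,X)$, while $2$--faces through $\tau$ become edges of that graph. Garland's identity then writes $\langle\Delta^{(1)}f,f\rangle$, modulo a ``gradient'' term lying in $\operatorname{im}(d_0)$, as a vertex--average of the local Dirichlet forms of the $f_\tau$ against the normalized graph Laplacians of the links. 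Connectedness of each $\operatorname{Link}(\tau,X)$ is exactly what ensures that the locally harmonic (hence locally constant) $0$--cochains on the link are just the constants -- which is precisely the content of the gradient term -- while the hypothesis $\lambda^{(0)}(\operatorname{Link}(\tau,X))>1/2$ bounds each link Laplacian below by $1-\varepsilon$ on the orthogonal complement of those constants. Substituting these bounds into the decomposition produces the inequality of Garland's method with $k=2$, namely $\lambda^{(1)}(X)\ge 1-2\varepsilon$; concretely, on the orthogonal complement of $\operatorname{im}(d_0)$ one has $\langle\Delta^{(1)}f,f\rangle\ge(1-2\varepsilon)\|f\|^2$. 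As $1-2\varepsilon>0$, there is no nonzero $\pi$--twisted harmonic $1$--cochain, so $\overline H^1(\Gamma,\pi)=0$; since this holds for every unitary $\pi$, the group $\Gamma=\pi_1(X)$ has property (T).

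The step I expect to be the main obstacle is establishing Garland's localization identity in this twisted, possibly infinite--dimensional setting: one must fix the weighted inner products correctly, prove the exact decomposition of $\langle\Delta^{(1)}f,f\rangle$ into purely local Dirichlet forms plus a ``gradient'' term supported on $\operatorname{im}(d_0)$, and verify that the local spectral gaps $\lambda^{(0)}(\operatorname{Link}(\tau,X))>1/2$ can be inserted without the locally--constant parts leaking past the gradient term -- the point where connectedness of every link is indispensable. The remaining ingredients (the reduction via Delorme--Guichardet/Shalom, the identification of reduced cohomology with harmonic cochains through Hodge theory, and the final bookkeeping) are routine once that inequality is available.
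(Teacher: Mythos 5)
The paper itself offers no proof of this statement: \.Zuk's criterion is quoted as a background theorem, with the argument living in the cited literature (\cite{Zuk1}, \cite{Zuk2}; see also \cite{Garland}, \cite{Opp1}). Your sketch reproduces that standard argument --- the Delorme--Guichardet/Shalom cohomological characterization of property (T), the identification of $\overline H^1(\pi_1(X),\pi)$ with twisted harmonic $1$--cochains on the weighted complex, and Garland's local--to--global estimate, in which each link's gap $\lambda^{(0)}>1/2$ yields $1-2\varepsilon>0$ while connectedness of the links confines the locally constant contributions to $\operatorname{im}(d_0)$ --- so the route is the right one, and the step you flag as the main obstacle (the twisted Garland identity with the correct weights) is precisely the technical content carried by those references. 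Two points should be made explicit rather than implicit: (i) finiteness (or compactness) of $X$ is genuinely needed, both so that $\pi_1(X)$ is finitely generated for the Shalom/Delorme--Guichardet reduction and so that the cochain spaces are Hilbert spaces --- harmless here, since the paper only applies the criterion to finite complexes; (ii) the passage from $\overline H^1(\pi_1(X),\pi)$ to reduced cohomology of $X$ with coefficients in the local system is cleanest if you work with the $\Gamma$--equivariant cochain complex of the universal cover (simply connected, with cocompact $\Gamma$--action), as in the Ballmann--Swiatkowski treatment, because for \emph{reduced} cohomology one must match the topologies on cochains and not merely the algebraic $H^1$; note also that once the Garland bound is in place, $\operatorname{im}(d_0)$ is closed, so reduced and unreduced $H^1$ coincide and either characterization of property (T) concludes the proof.
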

The $\lambda^{(k-1)}(X)$ mentioned in the above statement of Garland's method refers to the smallest nonzero eigenvalue of the top--dimensional up--down Laplacian of $X$ under a particular weighted inner product (see \cite{Lub} or \cite{Eig} for greater detail). This eigenvalue will be referred to as the \emph{spectral gap} of $X$. In the special case where $k=1$ and $X$ is a connected graph, $\lambda^{(0)}(X)$ corresponds to the second smallest eigenvalue of the reduced Laplacian of $X$. Fortunately, the spectral gap of a random graph, in one form or another, is already quite well studied \cite{FO}, \cite{coja}, \cite{oliveira}, \cite{dense}, \cite{HKP}. Combining our characterization of the $k-2$--dimensional links in $\mathcal{T}_{n,k}$ with the best known  techniques for the kind of spectral gap estimation we would like to do, we find the following: 
\begin{proposition}\label{othermainprop}
Let $\delta>0$ be an arbitrary fixed constant, and let $\mathcal{X}$ be the union of $\lceil\delta\log n\rceil$ jointly independent copies of $\mathcal{T}_{n,k}$ with $k$ fixed. Then, for any fixed $s>0$, we have with probability $1-o(n^{-s})$ that $\lambda^{(k-1)}(\mathcal{X})=1-O\left(1/\sqrt{\log n}\right)$.    
\end{proposition}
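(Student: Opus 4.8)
The plan is to invoke Garland's method to pass from the spectral gap of $\mathcal{X}$ to the spectral gaps of its $(k-2)$--dimensional links, to identify the law of those links via Theorem \ref{Link}, and then to apply the known theory of spectral gaps of sparse random graphs. Write $\mathcal{X}=\bigcup_{i=1}^{m}\mathcal{T}^{(i)}_{n,k}$, where $m=\lceil\delta\log n\rceil$ and $\mathcal{T}^{(1)}_{n,k},\dots,\mathcal{T}^{(m)}_{n,k}$ are the independent copies. Each copy is pure $k$--dimensional with full $(k-1)$--skeleton, hence so is $\mathcal{X}$, so every $(k-2)$--face $\tau\subseteq[n]$ lies in $\mathcal{X}$; and because links distribute over unions, $\operatorname{Link}(\tau,\mathcal{X})=\bigcup_{i=1}^{m}\operatorname{Link}(\tau,\mathcal{T}^{(i)}_{n,k})$. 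By the $S_n$--invariance of $\nu_{n,k}$ and Theorem \ref{Link} with $j=k-1$ (so $k-j=1$), each $\operatorname{Link}(\tau,\mathcal{T}^{(i)}_{n,k})$ is a graph on the $N:=n-k+1$ vertices of $[n]\setminus\tau$ that is distributed as a uniform spanning tree of $K_N$ together with an independent copy of $G(N,(k-1)/n)$, the $m$ such graphs being jointly independent. Thus $H_\tau:=\operatorname{Link}(\tau,\mathcal{X})$ is a union of $m$ i.i.d.\ such graphs; it is connected, and its edge set contains a copy of $G(N,q)$ with $q=1-(1-(k-1)/n)^{m}=\Theta(\log n/n)$.

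The crux is a spectral estimate for $H_\tau$. Fix $s>0$ and put $s'=s+k$. I would use the best available techniques for spectral gaps of sparse random graphs — a path--counting moment bound (trace method), or a discrepancy argument of Feige--Ofek type — to show there is a constant $A=A(\delta,k,s')$ such that, with probability at least $1-n^{-s'}$, every nontrivial eigenvalue of the weighted normalized Laplacian of $H_\tau$ lies within $A/\sqrt{\log n}$ of $1$; in particular $\lambda^{(0)}(H_\tau)\ge 1-A/\sqrt{\log n}$ and the largest eigenvalue is at most $1+A/\sqrt{\log n}$. The point is that $H_\tau$ is a random graph of average degree $\Theta(\delta\log n)$, the regime in which the gap is $1\pm O(1/\sqrt{\log n})$. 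Two points need attention. First, the edges coming from the spanning trees are not independent, so wherever edge independence would normally be used one instead invokes the negative association of the transfer--current determinantal process defining a uniform spanning tree, together with the genuine independence across the $m$ copies and across their Erd\H{o}s--R\'enyi parts; the same inputs control the (mildly nonuniform) vertex degrees, whose contribution to the normalized spectrum is in any case localized near $1$. Second, since $\delta$ is only a fixed positive constant, $A$ must be taken large in terms of $\delta$ and $s'$ in order to push the failure probability below $n^{-s'}$.

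Finally I would union bound and conclude. There are at most $\binom{n}{k-1}=O(n^{k-1})$ faces $\tau$, so with probability $1-O(n^{k-1-s'})=1-o(n^{-s})$ every $(k-2)$--link of $\mathcal{X}$ has $\lambda^{(0)}\ge 1-A/\sqrt{\log n}>0$ and largest eigenvalue at most $1+A/\sqrt{\log n}$. Garland's method then gives $\lambda^{(k-1)}(\mathcal{X})\ge 1-kA/\sqrt{\log n}=1-O(1/\sqrt{\log n})$; the matching upper bound $\lambda^{(k-1)}(\mathcal{X})\le 1+A/\sqrt{\log n}$ follows from the two--sided form of the local--to--global analysis underlying Garland's method (the top Laplacian of $\mathcal{X}$ is squeezed near $1$ from above by the link spectra, e.g.\ by testing against a cochain localized at a single $(k-2)$--face). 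Hence $\lambda^{(k-1)}(\mathcal{X})=1-O(1/\sqrt{\log n})$ on an event of probability $1-o(n^{-s})$, as required.

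The main obstacle is the estimate in the second paragraph: obtaining the correct $O(1/\sqrt{\log n})$ rate, with no spurious logarithmic factor, and with a tail small enough to survive the union over $\Theta(n^{k-1})$ links, for a random graph that is a union of $\Theta(\log n)$ spanning trees rather than one with independent edges. Off--the--shelf spectral--gap results for $G(N,p)$ or for random regular graphs do not apply directly, since the spanning--tree edges are correlated and the degrees are not uniform; the resolution is to push the moment or discrepancy machinery through using negative association in place of independence, exploiting that the $\Theta(\log n)$--fold independent union simultaneously raises the effective density to $\Theta(\log n)$ and sharpens the concentration.
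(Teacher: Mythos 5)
Your high--level plan coincides with the paper's: Garland's method, the identification via Theorem \ref{Link} (with $j=k-1$) of each $(k-2)$--link as a uniform spanning tree union an independent sparse Erd\H{o}s--R\'enyi graph, a union bound over the $O(n^{k-1})$ faces, and a Kahn--Szemer\'edi/Feige--Ofek spectral estimate for the link graph in which edge independence is replaced by negative association (the paper encodes NA as a moment--generating--function domination hypothesis and reruns Bernstein/Chernoff and the discrepancy argument under it). That part of your sketch is faithful in spirit to what the paper actually does in Section 4.

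The genuine gap is the minimum--degree control, which you dismiss with ``the same inputs control the (mildly nonuniform) vertex degrees.'' Converting the adjacency bound $O(\sqrt{nq})=O(\sqrt{\log n})$ into a normalized--Laplacian gap $1-O(1/\sqrt{\log n})$ requires $\min_i\deg(i)\gtrsim \delta\log n$ to hold with failure probability $o(n^{-s-k})$ for \emph{arbitrary} fixed $s$, and this cannot come from negative association plus edge marginals alone: the mean degree is only $\Theta(\delta\log n)$, so any NA/Chernoff lower--tail bound gives failure probability of the form $n^{-c\delta}$, which does not beat $n^{-s-k}$ when $\delta$ is small --- exactly the tension between ``$\delta>0$ arbitrary'' and ``$s>0$ arbitrary'' in the statement. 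The paper resolves this with a bespoke argument (the two lemmas preceding Lemma \ref{forapp}): by the $k=1$ case of Theorem \ref{main}, the vertex link in each spanning--tree copy is a binomial vertex set together with one uniformly random vertex, so across the $m=\lceil\delta\log n\rceil$ independent copies the degree dominates the number of distinct values among $m$ independent uniform draws from $[n-1]$; a coupon--collision estimate then yields $\Pr[\min_i\deg_G(i)\leq m-j]\leq(1+o(1))\,n^{1-j}e^{-km}$, which is $o(n^{-s})$ for a fixed $j$ depending only on $s$, independently of $\delta$. Without this structural input (or a substitute), the per--link tail $n^{-s'}$ you assert in your second paragraph is not attainable by the machinery you invoke. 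A minor further point: Garland's method gives only the lower bound $\lambda^{(k-1)}(\mathcal{X})\geq 1-k\varepsilon$, which is all the paper uses; your proposed ``matching upper bound'' via a two--sided local--to--global step is neither needed for the intended reading of the statement nor justified as sketched.
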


\begin{proof} 
This follows immediately from Garland's method in combination with Lemma \ref{forapp} and a union bound on the probability that any $k-2$--dimensional link of $\mathcal{X}$ fails to meet the criteria of Garland's method. 
\end{proof} 

Applying \.Zuk's criterion in the case $k=2$ gives the following corollary: 
\begin{corollary}\label{mainprop}
Let $\delta>0$ be an arbitrary fixed constant, and let $\mathcal{X}$ be the union of $\lceil\delta\log n\rceil$ jointly independent copies of $\mathcal{T}_{n,2}$. Then, for any fixed $s>0$, we have with probability $1-o(n^{-s})$ that $\pi_1(\mathcal{X})$ has Kazhdan's property $(T)$.     
\end{corollary}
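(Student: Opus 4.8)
The plan is to invoke \.Zuk's criterion with $X=\mathcal{X}$: it suffices to show that, with probability $1-o(n^{-s})$, \emph{every} $0$-face $\tau$ of $\mathcal{X}$ has the property that $\operatorname{Link}(\tau,\mathcal{X})$ is connected and $\lambda^{(0)}(\operatorname{Link}(\tau,\mathcal{X}))>1/2$. Write $\mathcal{X}=\bigcup_{i=1}^{m}\mathcal{T}_{n,2}^{(i)}$ with $m=\lceil\delta\log n\rceil$, the copies $\mathcal{T}_{n,2}^{(i)}$ being jointly independent. Each $\mathcal{T}_{n,2}^{(i)}$ has full $1$-skeleton, so $\mathcal{X}$ does too, and for every vertex $\tau$ one has the graph identity $\operatorname{Link}(\tau,\mathcal{X})=\bigcup_{i=1}^{m}\operatorname{Link}(\tau,\mathcal{T}_{n,2}^{(i)})$ on the vertex set $[n]\setminus\{\tau\}$. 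Because $\nu_{n,2}$ is invariant under permutations of $[n]$, Theorem \ref{Link} (with $j=1$) applies to every single vertex: $\operatorname{Link}(\tau,\mathcal{T}_{n,2})\lawequals\mathcal{T}_{n-1,1}\cup\mathcal{Y}_1(n-1,1/n)$, i.e.\ a uniform spanning tree of the complete graph on $n-1$ vertices with an independent $G(n-1,1/n)$ sprinkling of extra edges. Hence $\operatorname{Link}(\tau,\mathcal{X})$ is, in law, a union of $m$ independent graphs of this form.

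Connectivity is immediate. By Theorem \ref{Link} the link $\operatorname{Link}(\tau,\mathcal{T}_{n,2})$ contains a spanning tree of $K_{n-1}$ in distribution, hence is almost surely connected; therefore $\operatorname{Link}(\tau,\mathcal{X})\supseteq\operatorname{Link}(\tau,\mathcal{T}_{n,2}^{(1)})$ is almost surely connected, and this holds simultaneously for all $n$ choices of $\tau$. So the only substantive condition to verify is the spectral bound.

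For the spectral bound I would quote exactly the estimate that underlies Proposition \ref{othermainprop}, namely Lemma \ref{forapp} in the case $k=2$: with probability $1-o(n^{-s})$, every $0$-dimensional link of $\mathcal{X}$ satisfies $\lambda^{(0)}(\operatorname{Link}(\tau,\mathcal{X}))\geq 1-O(1/\sqrt{\log n})$, where the implied constant depends only on $\delta$ (and $s$). Once $n$ is large enough in terms of $\delta$ and this constant, the right-hand side exceeds $1/2$; on this event every vertex link of $\mathcal{X}$ satisfies both hypotheses of \.Zuk's criterion, and we conclude that $\pi_1(\mathcal{X})$ has Kazhdan's property $(T)$ with probability $1-o(n^{-s})$.

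The real content is therefore Lemma \ref{forapp} specialized to $k=2$: one must prove that a union of $\lceil\delta\log n\rceil$ independent copies of ``uniform spanning tree of $K_{n-1}$ together with a $G(n-1,1/n)$ sprinkling'' has normalized-Laplacian spectral gap $1-O(1/\sqrt{\log n})$ \emph{with failure probability $o(n^{-s})$}, uniformly over the $n$ possible excluded vertices, so that the union bound over $\tau$ is affordable. The delicate part is pushing the eigenvalue concentration for these sparse random graphs from $o(1)$ down to polynomially small failure probability; the natural route is to marry sharp spectral concentration for sparse Erd\H{o}s--R\'enyi graphs with the transfer-current (determinantal) description of the uniform spanning tree, using stochastic domination so that the binomial sprinkling can only improve the gap.
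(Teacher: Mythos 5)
Your proposal is correct and takes essentially the same route as the paper: the paper also obtains this corollary by feeding Lemma \ref{forapp} (the $1-O(1/\sqrt{\log n})$ spectral gap for the vertex-link union graph, with failure probability $o(n^{-s})$) into \.Zuk's criterion together with a union bound over the $n$ vertex links, with connectivity of each link coming for free from the spanning-tree component in the link law of Theorem \ref{Link}. Your closing sketch of how one might prove Lemma \ref{forapp} (transfer currents plus stochastic domination) differs from the paper's actual argument, which runs a Kahn--Szemer\'edi scheme with Bernstein/Chernoff bounds for the negatively associated edges of the link union, but since you cite the lemma rather than reprove it this does not affect the validity of your derivation of the corollary.
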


Particularly in computer science, there is a growing interest in generating families of graphs with large spectral gap, which are usually called expander graphs, and probabilistic constructions have been offered as a way to do this quickly and successfully with exceedingly high probability. For example, the authors of \cite{HKP} prove that the Erd\"os--R\'enyi random graph $\mathcal{G}(n,p)$ (in particular the random infinite family $\{\mathcal{G}(n,p(n))\}_{n\geq1}$) achieves the same spectral gap as found in Lemma \ref{forapp} and with equally high probability when $np\geq(1/2+\delta)\log n$ for any fixed $\delta>0$. Moreover, they show that the assumption $\delta>0$ is necessary for this to hold. The best known expander graph families have been constructed explicitly and have asymptotically (with respect to vertex count) constant average degree. So, while the Erd\"os--R\'enyi random graph can succeed at being a reliable expander, it is only able to do so if it is allowed an expected average degree far exceeding $\frac{1}{2}\log n$. 

Our characterization of the $k-2$--dimensional link of a determinantal $(n,k)$--tree as the union of a determinantal $(n-1,1)$--tree with an independent $\mathcal{G}(n-1,(k-1)/n)$ makes it, or perhaps the union of a small number of independent copies of it, a potential naturally occurring candidate for a random expander graph with constant expected average degree. Lemma \ref{forapp} shows that the number of superimposed independent copies of this graph required to match the result for $\G(n,p)$ is no more than $\delta\log n$ for every $\delta>0$. In particular, the resulting graph has an expected average degree of around $(k+1)\delta\log n$, improving upon $\mathcal{G}(n,p)$'s necessary expected average degree by a factor of arbitrary finite size. 
\subsection{Outline}
Section 2 is primarily devoted to establishing notation and basic homological definitions as well as providing background for the deterministic study of simplicial spanning trees. The only truly novel result of this section is Theorem \ref{submain}. 
In section 3 the results of section 2 are applied to the spanning trees of the $n-1$--simplex to give Theorem \ref{main} as well as several other new results about $\nu_{n,k}$. Section 4 establishes the tools required to prove Proposition \ref{othermainprop} and Corollary \ref{mainprop}. 

\section{Homological trees: simplicial and relative} 
A \emph{chain complex} is a sequence of abelian groups, $\{\mathcal{C}_j\}_{j\in\Z}$, called \emph{chain groups}, linked by group homomorphisms $\partial_j:\mathcal{C}_j\to\mathcal{C}_{j-1}$ called \emph{boundary maps} which satisfy $\partial_j\partial_{j+1}=0$ for all $j\in\Z$, or equivalently $\ker\partial_j\supseteq\Image\partial_{j+1}$. The \emph{$j$th homology group} of a chain complex such as this is defined to be the quotient group $\ker\partial_j/\Image\partial_{j+1}$. Since we will only be considering finitely--generated free $\Z$--modules for our chain groups, we can represent these boundary maps by integer matrices. By the structure theorem for finitely--generated abelian groups, the $j$th homology group can be expressed as the direct sum of a free abelian group, which is isomorphic to $\Z^{\beta_j}$ and called its \emph{free part}, and a finite abelian group called its \emph{torsion subgroup} which is a direct sum of finite cyclic groups. The rank $\beta_j$ of the free part is called the \emph{$j$th Betti number}. We will require the following standard fact from homological algebra which gives two equivalent formulas for what is commonly called the \emph{Euler characteristic} of a chain complex.   
\begin{lemma}\label{homalg}
Suppose $(\mathcal{C}_{\#},\partial_{\#})$ is a chain complex such that each chain group is freely and finitely generated and only finitely many of the chain groups are nontrivial. Let $f_j$ denote the rank of $\mathcal{C}_j$ for each $j\in\Z$. Then
$$\sum_{j\in\Z}(-1)^jf_j=\sum_{j\in\Z}(-1)^j\beta_j.$$
\end{lemma}

\subsection{Simplicial}

Fix an integer $n\geq1$. The set $[n]:=\{1,2,\dots,n\}$ will be our vertex set. 
For $-1\leq j\leq n-1$, a \emph{$j$--dimensional abstract simplex}, or \emph{$j$--face}, is a subset of $[n]$ with cardinality $j+1$. We denote the set of all $j$--faces on $[n]$ by ${[n]\choose{j+1}}$. All faces will be oriented according to the usual ordering on $[n]$. As such, we will be denoting elements of ${[n]\choose{j+1}}$ by $\{\tau_0,\tau_1,\dots,\tau_j\}$, where it is to be implicitly understood that $1\leq\tau_0<\tau_1<\dots<\tau_j\leq n$. Let $\partial=\partial_k^{[n]}$ be the matrix that's rows and columns are indexed respectively by ${[n]\choose k}$ and ${[n]\choose{k+1}}$, and for which, given $\sigma\in{[n]\choose k}$ and $\tau\in{[n]\choose{k+1}}$, we set  \begin{equation}\label{entries}
\partial(\sigma,\tau):=
\begin{cases}
(-1)^j, &\sigma=\tau\setminus\{\tau_j\} \\
0,&\text{otherwise}
\end{cases}
\end{equation} 
to account for our choice of orientation for each face. In particular, $\partial_0^{[n]}$ is the ${[n]\choose0}\times{[n]\choose1}$ all--ones matrix. 

An \emph{abstract simplicial complex with vertices in $[n]$} is a nonempty subset $X\subseteq\bigcup_{j\geq-1}{[n]\choose{j+1}}$ which, for every pair of subsets $\sigma\subseteq\tau$, satisfies $\tau\in X\implies\sigma\in X$. Let $\mathscr{A}_n$ denote the set of all abstract simplicial complexes on $[n]$. It is easily verified with this definition that $\mathscr{A}_n$ is closed under intersection and union. We write $X_j:={[n]\choose{j+1}}\cap X$ and define the \emph{dimension} of $X$ to be $\dim X:=\sup\{k\geq-1:X_k\neq\emptyset\}$. If $X$ has dimension $k$, $X$ is said to be \emph{pure} if, for every $-1\leq j<k$ and every $j$--face $\sigma\in X$, there exists a $\tau\in X_k$ such that $\sigma\subset\tau$. 
An important example is $K^k_n:=\bigcup_{j=0}^{k+1}{[n]\choose j}$, the complete $k$--dimensional complex on $[n]$. 

Given a matrix $M$ with entries indexed over the set $S\times T$, for $A\subseteq S$ and $B\subseteq T$, we write $M_{A,B}$ to denote the submatrix of $M$ with rows indexed by $A$ and columns indexed by $B$. We will also occasionally write $M_{\bullet,B}$ for the case $A=S$. As a point of clarification for this notation, transposes are handled by the convention of writing $M_{A,B}^t$ to mean $(M_{A,B})^t=(M^t)_{B,A}$. 

Given $X\in\mathscr{A}_n$, we define its chain complex $(\mathcal{C}_{\#},\partial_{\#})$ by $\mathcal{C}_j(X):=\Z^{X_j}$, and $\partial_j^X:=\partial_{X_{j-1},X_j}$. 
Note then that we have $\mathcal{C}_j(X)=0$
for all $j\geq n$ and all $j<-1$---because $|X_{-1}|=|\{\emptyset\}|=1$. We denote the $j$th homology group of this chain complex by $\tilde H_j(X)$. It is equivalent to the $j$th \emph{reduced} simplicial homology group, hence the notation. Note that, with this chain sequence, it makes sense to consider $\tilde{H}_{-1}=\ker\partial^{[n]}_{-1}/\Image\partial^{[n]}_0$ as well, although we can see that this group is always trivial since $\partial^{[n]}_0$ is surjective. 
\begin{lemma}\label{Hodgey}
$\partial^{[n]}_k\partial^{[n]t}_k+\partial^{[n]t}_{k-1}\partial^{[n]}_{k-1}=n\Id$. 
\end{lemma}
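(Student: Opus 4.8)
The plan is to verify the identity entry by entry. Both $\partial^{[n]}_k\partial^{[n]t}_k$ and $\partial^{[n]t}_{k-1}\partial^{[n]}_{k-1}$ are square matrices whose rows and columns are indexed by $\binom{[n]}{k}$, so I fix two size-$k$ subsets $\sigma,\sigma'\subseteq[n]$ and compute the $(\sigma,\sigma')$-entry of each. Expanding the matrix products, $(\partial_k\partial_k^t)_{\sigma,\sigma'}=\sum_{\tau\in\binom{[n]}{k+1}}\partial_k(\sigma,\tau)\,\partial_k(\sigma',\tau)$ receives a nonzero contribution only from those $\tau$ containing both $\sigma$ and $\sigma'$, hence containing $\sigma\cup\sigma'$; likewise $(\partial_{k-1}^t\partial_{k-1})_{\sigma,\sigma'}=\sum_{\rho\in\binom{[n]}{k-1}}\partial_{k-1}(\rho,\sigma)\,\partial_{k-1}(\rho,\sigma')$ receives a contribution only from those $\rho$ contained in $\sigma\cap\sigma'$. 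This reduces the computation to three cases according to $|\sigma\cap\sigma'|$.

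If $|\sigma\cap\sigma'|\le k-2$, then $|\sigma\cup\sigma'|\ge k+2$, so no $(k+1)$-set contains $\sigma\cup\sigma'$ and no $(k-1)$-set lies inside $\sigma\cap\sigma'$; both entries vanish, matching the off-diagonal of $n\Id$. If $\sigma=\sigma'$, the two diagonal entries are pure face counts: $(\partial_k\partial_k^t)_{\sigma,\sigma}$ is the number of $v\in[n]\setminus\sigma$ one may adjoin to $\sigma$, namely $n-k$, and $(\partial_{k-1}^t\partial_{k-1})_{\sigma,\sigma}$ is the number of vertices of $\sigma$ one may delete, namely $k$; these sum to $n$.

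The only case with any content is $|\sigma\cap\sigma'|=k-1$. Here I write $\rho=\sigma\cap\sigma'$, $\sigma=\rho\cup\{a\}$, $\sigma'=\rho\cup\{b\}$ with $a\ne b$; then $\tau=\rho\cup\{a,b\}$ is the unique common coface and $\rho$ the unique common face, so $(\partial_k\partial_k^t)_{\sigma,\sigma'}=\partial_k(\sigma,\tau)\,\partial_k(\sigma',\tau)$ and $(\partial_{k-1}^t\partial_{k-1})_{\sigma,\sigma'}=\partial_{k-1}(\rho,\sigma)\,\partial_{k-1}(\rho,\sigma')$. By the sign rule \eqref{entries}, each of these four numbers is $\pm1$, its sign being $(-1)$ raised to the ordinal position (with respect to the ordering on $[n]$, counting from zero) of the vertex deleted from the larger face to obtain the smaller. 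Assuming $a<b$ and letting $q$ denote the number of elements of $\rho$ lying strictly between $a$ and $b$, a direct count of these four positions yields $(\partial_k\partial_k^t)_{\sigma,\sigma'}=(-1)^{q+1}$ and $(\partial_{k-1}^t\partial_{k-1})_{\sigma,\sigma'}=(-1)^{q}$, so the two entries cancel and the sum is $0$, again matching $n\Id$ off the diagonal. This sign cancellation is the crux of the argument; everything else is elementary counting. (The degenerate case $k=0$ reads $\partial_0\partial_0^t=n$, which is immediate since $\partial^{[n]}_0$ is the $1\times n$ all-ones matrix and $\partial_{-1}$ is empty.)
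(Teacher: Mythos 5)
Your proof is correct, and it is essentially the paper's argument: an entry-by-entry computation via the sign rule \eqref{entries}, with the diagonal giving $(n-k)+k$ and the off-diagonal entries cancelling in pairs. The only cosmetic difference is that where you count the four signs directly in the $|\sigma\cap\sigma'|=k-1$ case, the paper packages that same cancellation as the chain identity $\partial^{[n]}_{k-1}\partial^{[n]}_k=0$ (citing Kalai's Lemma 3 for details and treating $k=0,1$ separately), so the content is the same.
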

\begin{proof}
This follows by explicit computation of matrix entries via (\ref{entries}) combined with the chain sequence identity $\partial^{[n]}_{k-1}\partial^{[n]}_k=0$ which can also be seen to hold by explicit computation of matrix entries via (\ref{entries}). A more detailed explanation along these lines for the cases $k\geq2$ can be found in the proof of Lemma 3 in \cite{Kalai}, and the case $k=0$ is straightforward. For the case $k=1$, $\partial^{[n]}_1\partial^{[n]t}_1$ is the combinatorial Laplacian of the complete graph on $[n]$. Hence $\partial^{[n]}_1\partial^{[n]t}_1=n\Id-\operatorname{J}$ where $\operatorname{J}$ is the $n\times n$ all--ones matrix. Noticing that $\partial^{[n]t}_0\partial^{[n]}_0=\operatorname{J}$ then completes the proof. 
\end{proof}

Fix a $\Delta\in\mathscr{A}_n$ and set 
$\mathscr{C}_{n,k}(\Delta):=\left\{X\in\mathscr{A}_n:K_n^{k-1}\cap\Delta\subseteq X\subseteq K_n^k\cap\Delta\right\}$. We will call elements of this set the $(n,k)$--complexes of $\Delta$, or just $k$--complexes when the context is clear. Note this definition makes sense even if $\dim\Delta>k$, but in this case $\mathscr{C}_{n,k}(\Delta)=\mathscr{C}_{n,k}(K_n^k\cap\Delta)$. 
Building upon work done by Duval, Klivans, and Martin \cite{STT}, \cite{DKM1}, \cite{DKM2}, Bernardi and Klivans \cite{BK} define a higher--dimensional forest of $\Delta$ to be a subset $F\subseteq\Delta_k$ such that $\partial_{\Delta_{k-1},F}$ is injective. Our definition of a forest will be equivalent to this one except that we will consider $F$ to be an entire element of $\mathscr{C}_{n,k}(\Delta)$ by adding to it the $k-1$--skeleton of $\Delta$, more similar to the situation in \cite{STT}. That is to say, an $F\in\mathscr{C}_{n,k}(\Delta)$ is called a $k$--\emph{forest} of $\Delta$ if $\beta_k(F)=0$, in which case we write $F\in\mathscr{F}_{n,k}(\Delta)$. It is easily verified that this is equivalent to Definition 3 in \cite{BK} by noting that $\tilde{H}_k(F)=\ker\partial_k^F$ is a free group and thus is 0 if and only if its rank is 0. A $k$--forest $T\in\mathscr{F}_{n,k}(\Delta)$ with $|T_k|=\rank\partial_k^{\Delta}$ (the maximal possible value for a forest) is said to be a \emph{$k$--tree of $\Delta$}. Let $\mathscr{T}_{n,k}(\Delta)$ denote the set of $k$--trees of $\Delta$---\cite{BK} calls these the \emph{spanning forests} of $\Delta$. In the case $\Delta=K_n^{n-1}$, or just $\Delta\supseteq K_n^k$, these are Kalai's $k$--dimensional $\Q$--acyclic simplicial complexes mentioned in the introduction, and in this case we will suppress the $(\Delta)$ in all the above notation. The following result from \cite{DKM2} generalizes Proposition 2 from \cite{Kalai} to general $k$--trees of $\Delta\in\mathscr{A}_n$: \begin{lemma}\label{general2/3}
For $X\in\mathscr{C}_{n,k}(\Delta)$, if any two of the following conditions hold, then so does the other condition and moreover $X\in\mathscr{T}_{n,k}(\Delta)$:
\begin{itemize}
    \item $|X_k|=\rank\partial_k^{\Delta}$,
    \item $\beta_{k-1}(X)=\beta_{k-1}(\Delta)$, 
    \item $\beta_k(X)=0$. 
\end{itemize}
\end{lemma}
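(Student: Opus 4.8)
The plan is to work entirely with the rank-nullity theorem applied to the boundary map $\partial_k^X = \partial_{X_{k-1},X_k}$ and the Euler-characteristic identity from Lemma \ref{homalg}, exploiting the fact that every $X\in\mathscr{C}_{n,k}(\Delta)$ shares the same $(k-1)$-skeleton, namely $(K_n^{k-1}\cap\Delta)\cup X_k$, with $X_{k-1} = (K_n^{k-1}\cap\Delta)_{k-1} = \Delta_{k-1}$ fixed independent of $X$. First I would record the two basic linear-algebra identities governing $\partial_k^X$. Since $\partial_{k-1}^X = \partial_{k-1}^\Delta$ does not depend on $X$ and $X_{k-1}=\Delta_{k-1}$, we have $\Image\partial_k^X \subseteq \ker\partial_{k-1}^\Delta$, so $\rank\partial_k^X \le \rank\partial_k^\Delta$ always, with equality exactly when $X_k$ contains a maximal $\partial$-independent set. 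Rank-nullity on $\partial_k^X\colon \Z^{X_k}\to\Z^{\Delta_{k-1}}$ gives $|X_k| = \rank\partial_k^X + \dim\ker\partial_k^X = \rank\partial_k^X + \beta_k(X)$, using that $\tilde H_k(X)=\ker\partial_k^X$ is free (there is no $\partial_{k+1}$ since $\dim X \le k$). That single equation already links the three bulleted quantities to $\rank\partial_k^X$.

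Next I would bring in $\beta_{k-1}$. Again by rank-nullity, $\dim\ker\partial_{k-1}^\Delta = |\Delta_{k-1}| - \rank\partial_{k-1}^\Delta$, and since $\Image\partial_k^X$ is a subspace of $\ker\partial_{k-1}^\Delta = Z_{k-1}$, we get $\beta_{k-1}(X) = \dim(Z_{k-1}/\Image\partial_k^X) = \dim Z_{k-1} - \rank\partial_k^X$. Applying the same computation with $X$ replaced by $\Delta\cap K_n^k$ (whose $(k-1)$-skeleton and whose $\partial_{k-1}$ agree with those of any $X\in\mathscr{C}_{n,k}(\Delta)$, and which realizes the maximal rank $\rank\partial_k^\Delta$) yields $\beta_{k-1}(\Delta) = \dim Z_{k-1} - \rank\partial_k^\Delta$. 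Subtracting gives the clean master identity
\[
\beta_{k-1}(X) - \beta_{k-1}(\Delta) = \rank\partial_k^\Delta - \rank\partial_k^X \;\ge\; 0,
\]
valid for every $X\in\mathscr{C}_{n,k}(\Delta)$. Combined with $|X_k| = \rank\partial_k^X + \beta_k(X)$ and the constant $|\Delta_k\cap K_n^k|$-independent quantity $\rank\partial_k^\Delta$, all three bulleted conditions become statements about the single nonnegative integer $d := \rank\partial_k^\Delta - \rank\partial_k^X$: the first says $|X_k|=\rank\partial_k^\Delta$, i.e. $d = \beta_k(X)$ reading off the rank-nullity line; the second says $d=0$; the third says $\beta_k(X)=0$.

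Finally I would close the logical triangle. From $\beta_{k-1}(X)-\beta_{k-1}(\Delta) = d$ and $|X_k| - \rank\partial_k^\Delta = \beta_k(X) - d$, any two of \{$|X_k| = \rank\partial_k^\Delta$, $d=0$, $\beta_k(X)=0$\} force the third: e.g. $d=0$ and $\beta_k(X)=0$ give $|X_k|=\rank\partial_k^\Delta$; $|X_k|=\rank\partial_k^\Delta$ and $d=0$ give $\beta_k(X)=0$; $|X_k|=\rank\partial_k^\Delta$ and $\beta_k(X)=0$ give $d=0$. And when $d=0$ and $|X_k|=\rank\partial_k^\Delta$ hold, $X$ is by definition a $k$-forest ($\beta_k(X)=0$) with the maximal number of $k$-faces, hence $X\in\mathscr{T}_{n,k}(\Delta)$. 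The only point requiring a little care — the main obstacle, such as it is — is justifying that $\dim Z_{k-1}$ and $\rank\partial_{k-1}^\Delta$ are genuinely the same for $X$ and for $\Delta\cap K_n^k$; this is where one must use explicitly that membership in $\mathscr{C}_{n,k}(\Delta)$ pins down the entire $(k-1)$-skeleton, so that $\partial_{k-1}^X$ and $Z_{k-1}$ are literally unchanged as $X$ varies. Alternatively, one can sidestep the bookkeeping by invoking Lemma \ref{homalg} directly: for any $X\in\mathscr{C}_{n,k}(\Delta)$ the chain complex has $f_j$ equal to $\binom{n}{j+1}\cap\Delta$-counts for $j\le k-1$ (all independent of $X$) and $f_k=|X_k|$, while the $\beta_j$ for $j\le k-2$ are likewise fixed; so the alternating-sum identity reduces to a relation among $|X_k|$, $\beta_{k-1}(X)$, $\beta_k(X)$ with an $X$-independent constant, and comparing against the analogous relation for $\Delta\cap K_n^k$ recovers the master identity above with no rank-nullity needed.
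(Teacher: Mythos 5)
Your argument is correct. The two identities you isolate, $|X_k|=\rank\partial_k^X+\beta_k(X)$ (valid because $\dim X\leq k$, so $\tilde H_k(X)=\ker\partial_k^X$) and $\beta_{k-1}(X)-\beta_{k-1}(\Delta)=\rank\partial_k^\Delta-\rank\partial_k^X$ (valid because every $X\in\mathscr{C}_{n,k}(\Delta)$ has the same $(k-1)$--skeleton and hence the same $\partial_{k-1}$ and the same cycle space $Z_{k-1}$ as $\Delta$, whose $\beta_{k-1}$ depends only on its $k$--skeleton), do reduce the three bullets to statements about $d:=\rank\partial_k^\Delta-\rank\partial_k^X$, and any two of $\{\beta_k(X)=d,\ d=0,\ \beta_k(X)=0\}$ force the third; the conclusion $X\in\mathscr{T}_{n,k}(\Delta)$ then follows directly from the paper's definition of a $k$--tree as a $k$--forest with $|X_k|=\rank\partial_k^\Delta$. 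Your route is genuinely different from the paper's: the paper gives no proof of Lemma \ref{general2/3}, quoting it from \cite{DKM2}, and its own machinery for statements of this kind is homological rather than matrix--theoretic --- Lemma \ref{evenmoregeneral2/3} obtains the relative analogue from the long exact sequence of the triple $(\Delta,X,Y)$ (of which Lemma \ref{general2/3} is essentially the special case where $Y$ is a single vertex), and Lemma \ref{moregeneral2/3} uses the Euler--characteristic identity of Lemma \ref{homalg}. Your chain--level rank--nullity computation is more elementary and self--contained, and it makes the inequality $\beta_{k-1}(X)\geq\beta_{k-1}(\Delta)$ explicit; what the exact--sequence approach buys is the relative generality that the paper actually needs later (e.g.\ for Theorem \ref{submain} via rooted forests). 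One small caveat: in your alternative sketch via Lemma \ref{homalg}, determining the $X$--independent constant by comparing with $\Delta\cap K_n^k$ still uses $\beta_k(\Delta\cap K_n^k)=|\Delta_k|-\rank\partial_k^\Delta$, i.e.\ rank--nullity, so that variant is not quite ``rank--nullity free''; this does not affect your main argument.
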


\subsection{Relative}

Given a pair $(X,Y)\in\mathscr{A}_n^2$ with $X\supseteq Y$, we can define another chain complex by  $\mathcal{C}_j(X,Y):=\Z^{X_j\setminus Y_j}$ with boundary maps $\partial_j^{X/Y}:=\partial_{X_{j-1}\setminus Y_{j-1},X_j\setminus Y_j}$. The homology groups of this chain complex are called \emph{relative homology groups} and denoted $H_j(X,Y)$. We can recover ordinary reduced homology from this by taking $Y$ to be $\{\emptyset\}$ or any complex with a single 0--face and no larger faces. Moreover, since homology is homotopy invariant, we also have that $H_j(X,Y)\cong\tilde H_j(X)$ as long as $Y$ is contractible. 

An example of a contractible  complex, the simplicial cone of a complex $\Delta\in\mathscr{A}_{n-1}$ is defined by $\operatorname{Cone}(n,\Delta):=\Delta\cup\{\sigma\cup\{n\}:\sigma\in\Delta\}\in\mathscr{A}_n$. We also define for any $X\in\mathscr{C}_{n,k}(\operatorname{Cone}(n,\Delta))$ the complexes $\operatorname{Proj}(n,X):=X\cap\Delta\in\mathscr{C}_{n-1,k}(\Delta)$ and
$\operatorname{Link}(n,X):=\{\sigma\in\Delta:\sigma\cup\{n\}\in X\}\in\mathscr{C}_{n-1,k-1}(\Delta)$. The next lemma defines what we will call the \emph{binomial correspondence}
$\mathscr{C}_{n,k}(\operatorname{Cone}(n,\Delta))\xrightarrow{\sim}\mathscr{C}_{n-1,k}(\Delta)\times\mathscr{C}_{n-1,k-1}(\Delta)$ for its relationship to the binomial recurrence formula ${n\choose{k+1}}={{n-1}\choose {k+1}}+{{n-1}\choose k}$ in the special case that $\operatorname{Cone}(n,\Delta)\supseteq K_n^k$. 

\begin{lemma}\label{binomial}
For $\Delta\in\mathscr{A}_{n-1}$, the map $\varphi(X):=(\operatorname{Proj}(n,X),\operatorname{Link}(n,X))$ is an injection from $\mathscr{C}_{n,k}(\operatorname{Cone}(n,\Delta))$ into $\mathscr{C}_{n-1,k}(\Delta)\times\mathscr{C}_{n-1,k-1}(\Delta)$, and its (left) inverse is given by $\varphi^{-1}(F,R)=F\cup\operatorname{Cone}(n,R)$ which extends to an injection of $\mathscr{C}_{n-1,k}(\Delta)\times\mathscr{C}_{n-1,k-1}(\Delta)$ into $\mathscr{C}_{n,k}(\operatorname{Cone}(n,\Delta))$. 
\end{lemma}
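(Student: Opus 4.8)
The plan is to prove a little more than asked, namely that $\varphi$ and the map $\psi(F,R):=F\cup\operatorname{Cone}(n,R)$ are mutually inverse bijections between $\mathscr{C}_{n,k}(\operatorname{Cone}(n,\Delta))$ and $\mathscr{C}_{n-1,k}(\Delta)\times\mathscr{C}_{n-1,k-1}(\Delta)$; the stated consequences (that $\varphi$ is injective, that $\psi=\varphi^{-1}$ is its left inverse, and that $\psi$ is injective as a total map) then follow at once. Two simple facts drive everything. First, for any $F\in\mathscr{C}_{n-1,k}(\Delta)$ and $R\in\mathscr{C}_{n-1,k-1}(\Delta)$ one has the skeleton containment $R\subseteq K_{n-1}^{k-1}\cap\Delta\subseteq F$, so the two pieces of the union $F\cup\operatorname{Cone}(n,R)$ overlap exactly in $R$. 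Second, the faces of $\operatorname{Cone}(n,R)$ are precisely the faces of $R$, none of which contains $n$, together with the sets $\sigma\cup\{n\}$ for $\sigma\in R$, each of which contains $n$ and hence does not lie in $\Delta$ since $\Delta\in\mathscr{A}_{n-1}$.

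First I would verify that $\psi$ takes values in $\mathscr{C}_{n,k}(\operatorname{Cone}(n,\Delta))$ — the corresponding statement for $\varphi$ is already built into the definitions of $\operatorname{Proj}$ and $\operatorname{Link}$. That $F\cup\operatorname{Cone}(n,R)$ is a simplicial complex is immediate from closure of $\mathscr{A}_n$ under unions. For the inclusion into $K_n^k\cap\operatorname{Cone}(n,\Delta)$, observe that $F\subseteq\Delta\subseteq\operatorname{Cone}(n,\Delta)$ has faces of dimension at most $k$, and $\operatorname{Cone}(n,R)\subseteq\operatorname{Cone}(n,\Delta)$ has faces of dimension at most $k$ because $R$ has faces of dimension at most $k-1$. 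For the reverse inclusion $K_n^{k-1}\cap\operatorname{Cone}(n,\Delta)\subseteq F\cup\operatorname{Cone}(n,R)$, take $\tau\in\operatorname{Cone}(n,\Delta)$ with $\dim\tau\le k-1$: if $n\notin\tau$ then $\tau\in K_{n-1}^{k-1}\cap\Delta\subseteq F$, and if $n\in\tau$ then $\tau=\sigma\cup\{n\}$ with $\sigma\in K_{n-1}^{k-2}\cap\Delta\subseteq R$, so $\tau\in\operatorname{Cone}(n,R)$.

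Finally I would compute the two composites. For $\varphi\circ\psi=\operatorname{id}$: intersecting $F\cup\operatorname{Cone}(n,R)$ with $\Delta$ discards the faces of $\operatorname{Cone}(n,R)$ containing $n$ and leaves $F\cup R=F$ by the containment above, so $\operatorname{Proj}(n,\psi(F,R))=F$; and the faces of $\psi(F,R)$ containing $n$ come only from $\operatorname{Cone}(n,R)$ and are exactly $\{\sigma\cup\{n\}:\sigma\in R\}$, so $\operatorname{Link}(n,\psi(F,R))=R$. For $\psi\circ\varphi=\operatorname{id}$: given $X\in\mathscr{C}_{n,k}(\operatorname{Cone}(n,\Delta))$, split its faces according to whether they contain $n$; a face avoiding $n$ lies in $\Delta$ because $X\subseteq\operatorname{Cone}(n,\Delta)$, hence in $\operatorname{Proj}(n,X)$, whereas a face $\tau$ with $n\in\tau$ has $\tau\setminus\{n\}\in\operatorname{Link}(n,X)$, hence $\tau\in\operatorname{Cone}(n,\operatorname{Link}(n,X))$; conversely $\operatorname{Proj}(n,X)$ and $\operatorname{Cone}(n,\operatorname{Link}(n,X))$ are both subcomplexes of $X$, so $\psi(\varphi(X))=X$. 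I do not expect a genuine obstacle here; the one place that rewards care is the bookkeeping of skeleton degrees — dimension at most $k-2$ for the faces of $R$ that are forced, at most $k-1$ for those of $F$, and $k$ at the top — where an off-by-one slip would be the easiest mistake to make.
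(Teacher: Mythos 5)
Your proof is correct and follows essentially the same direct set--theoretic verification as the paper, only with the emphasis reversed: the paper declares the mutual--inverse identities ``clear'' and spends its effort checking that $\varphi(X)$ and $F\cup\operatorname{Cone}(n,R)$ land in the claimed codomains, while you verify the composites and the codomain of $\psi$ in detail. The one step you wave off --- that $\operatorname{Proj}(n,X)\in\mathscr{C}_{n-1,k}(\Delta)$ and $\operatorname{Link}(n,X)\in\mathscr{C}_{n-1,k-1}(\Delta)$ --- is in fact part of what the lemma asserts (the definitions merely announce it, and the paper's proof is precisely this check), but it follows in one line from the sandwich $K_n^{k-1}\cap\operatorname{Cone}(n,\Delta)\subseteq X\subseteq K_n^k\cap\operatorname{Cone}(n,\Delta)$, exactly as in your verification for $\psi$, so this is a presentational omission rather than a gap.
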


\begin{proof}
For brevity, set $\Delta_{(j)}:=K_n^j\cap\Delta$. Since $\varphi^{-1}$ is clearly also a right inverse of $\varphi$, the only thing to show is that the images of these restrictions lie where we claim they do. 
Starting with $\varphi$, we first recall that $$\mathscr{C}_{n,k}(\operatorname{Cone}(n,\Delta))=\mathscr{C}_{n,k}(\operatorname{Cone}(n,\Delta)_{(k)})=\mathscr{C}_{n,k}(\Delta_{(k)}\cup\{\sigma\cup\{n\}:\sigma\in\Delta_{(k-1)}\}).$$
So for $X\in\mathscr{C}_{n,k}(\operatorname{Cone}(n,\Delta))$ we have $$\Delta_{(k-1)}\cup\{\sigma\cup\{n\}:\sigma\in\Delta_{(k-2)}\}\subseteq X\subseteq\Delta_{(k)}\cup\{\sigma\cup\{n\}:\sigma\in\Delta_{(k-1)}\}.$$
Thus $\Delta_{(k-1)}\subseteq\operatorname{Proj}(n,X)\subseteq\Delta_{(k)}$ and $\Delta_{(k-2)}\subseteq\operatorname{Link}(n,X)\subseteq\Delta_{(k-1)}$ as desired. 
As for $\varphi^{-1}$, we have $$\operatorname{Cone}(n,\Delta)_{(k-1)}=\Delta_{(k-1)}\cup\operatorname{Cone}(n,\Delta_{(k-2)})\subseteq F\cup\operatorname{Cone}(n,R)\subseteq\operatorname{Cone}(n,\Delta)_{(k)}$$
due to the assumption that $(F,R)\in\mathscr{C}_{n-1,k}(\Delta)\times\mathscr{C}_{n-1,k-1}(\Delta)$. 
\end{proof}

\begin{theorem}[Excision Theorem]
For any $A,B\in\mathscr{A}_n$, we have $H_*(A,A\cap B)\cong H_*(A\cup B,B)$. 
\end{theorem}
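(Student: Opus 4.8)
The plan is to show that excision holds \emph{on the nose} in this combinatorial setting: the relative chain complex $(\mathcal{C}_\#(A,A\cap B),\partial_\#^{A/(A\cap B)})$ is not merely quasi-isomorphic but literally \emph{equal} to $(\mathcal{C}_\#(A\cup B,B),\partial_\#^{(A\cup B)/B})$. Once that is established the theorem is immediate, since identical chain complexes have identical homology (so in fact one gets equality, not just the asserted isomorphism).

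First I would unwind the definitions from the previous subsection. Both relative complexes are well defined because $A\supseteq A\cap B$ and $A\cup B\supseteq B$, and by definition $\mathcal{C}_j(A,A\cap B)=\Z^{A_j\setminus(A\cap B)_j}$ and $\mathcal{C}_j(A\cup B,B)=\Z^{(A\cup B)_j\setminus B_j}$. Since $X_j={[n]\choose j+1}\cap X$ for every $X\in\mathscr{A}_n$, the operation $X\mapsto X_j$ commutes with finite intersections and unions, so $(A\cap B)_j=A_j\cap B_j$ and $(A\cup B)_j=A_j\cup B_j$. Hence the index sets agree degreewise:
\[
A_j\setminus(A\cap B)_j=A_j\setminus(A_j\cap B_j)=A_j\setminus B_j=(A_j\cup B_j)\setminus B_j=(A\cup B)_j\setminus B_j,
\]
for every $j$ — including the degenerate degrees $j\le-1$, where both sides are empty because $A_{-1}=(A\cap B)_{-1}=\{\emptyset\}$. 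Thus $\mathcal{C}_j(A,A\cap B)=\mathcal{C}_j(A\cup B,B)$ as free $\Z$--modules for all $j$.

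Next I would check the boundary maps agree. By definition $\partial_j^{A/(A\cap B)}=\partial_{A_{j-1}\setminus(A\cap B)_{j-1},\,A_j\setminus(A\cap B)_j}$ and $\partial_j^{(A\cup B)/B}=\partial_{(A\cup B)_{j-1}\setminus B_{j-1},\,(A\cup B)_j\setminus B_j}$; by the displayed identity applied in degrees $j-1$ and $j$, both are the submatrix of the single fixed integer matrix $\partial_j^{[n]}$ from (\ref{entries}) cut out by the common row index set $A_{j-1}\setminus B_{j-1}$ and column index set $A_j\setminus B_j$. Hence they are the same matrix, the two relative chain complexes coincide, and therefore $H_j(A,A\cap B)=H_j(A\cup B,B)$ for every $j$, which is the claim.

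I do not expect a genuine obstacle: the whole content is the observation that, because every complex in $\mathscr{A}_n$ is presented as a subset of the fixed simplex $K_n^{n-1}$ and every boundary map is a restriction of the one global matrix $\partial^{[n]}$, the classical ``enlarge by $B$, shrink by $A\cap B$'' move of excision becomes a tautology at the level of chains. The only step requiring care is the degreewise set bookkeeping together with the index shift between the boundary degree $j$ and the face dimensions $j-1,j$; I would guard against an off-by-one by writing out the $j=-1$ and $j=0$ cases explicitly, exactly as indicated above.
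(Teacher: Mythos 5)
Your proof is correct. Note that the paper itself offers no proof of this statement: it is quoted as a standard background theorem (alongside the long exact sequence of a triple) and simply invoked later, e.g.\ in Theorem \ref{submain} for the pair $(T,\operatorname{Cone}(n,R))$ versus $(F,R)$. What your argument does differently is exploit the paper's concrete definition of relative chains, $\mathcal{C}_j(X,Y)=\Z^{X_j\setminus Y_j}$ with $\partial_j^{X/Y}$ a submatrix of the single global matrix $\partial_j^{[n]}$, to show that the two relative complexes are not just quasi-isomorphic but literally identical: since $X\mapsto X_j={[n]\choose j+1}\cap X$ commutes with $\cap$ and $\cup$, both index sets reduce to $A_j\setminus B_j$ in every degree (including $j=-1$, where they are empty), and both boundary maps are the same submatrix $\partial_{A_{j-1}\setminus B_{j-1},\,A_j\setminus B_j}$. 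This buys a completely self-contained, elementary verification (in fact equality of homology groups, stronger than the stated isomorphism) with none of the machinery—open covers, subdivision, or simplicial approximation—that a citation to topological excision implicitly carries; the only mild caveat is that your argument is tied to the paper's combinatorial definition of relative homology rather than proving the classical theorem it is named after, which is exactly the level of generality the paper needs.
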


\begin{theorem}[Long exact sequence of a triple]
For $A\subseteq B\subseteq C\in\mathscr{A}_n$, arrows exist for which the following sequence is exact:
$$\begin{tikzcd}
\cdots\arrow{r}&H_k(B,A)\arrow{r}&H_k(C,A)\arrow{r}&H_k(C,B)\arrow{dll} \\ &H_{k-1}(B,A)\arrow{r}&\cdots\arrow{r}&H_0(C,B)\arrow{r}&0
\end{tikzcd}.$$
That is, the kernel of each arrow in this diagram is equal to the image of the preceding arrow. 
\end{theorem}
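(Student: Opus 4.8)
The plan is to recognize this as the homology long exact sequence attached to a short exact sequence of chain complexes, and then to invoke the standard zig--zag lemma of homological algebra; the only work particular to our setting is checking that the relevant maps of simplicial chain groups commute with the boundary operators $\partial^{[n]}$ of (\ref{entries}).

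First I would write down, for every degree $j$, the sequence of free abelian groups
$$0\longrightarrow\mathcal{C}_j(B,A)\xrightarrow{\ \iota_j\ }\mathcal{C}_j(C,A)\xrightarrow{\ \pi_j\ }\mathcal{C}_j(C,B)\longrightarrow0,$$
where $\iota_j$ is the inclusion of the coordinate subgroup spanned by the basis vectors indexed by $B_j\setminus A_j$ into $\Z^{C_j\setminus A_j}$, and $\pi_j$ is the projection killing exactly those coordinates. This makes sense because $A\subseteq B\subseteq C$ forces $A_j\subseteq B_j\subseteq C_j$, so $B_j\setminus A_j\subseteq C_j\setminus A_j$ and $(C_j\setminus A_j)\setminus(B_j\setminus A_j)=C_j\setminus B_j$; exactness at each degree is then immediate, since it is simply a split short exact sequence of free $\Z$--modules on disjoint index sets.

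Next I would verify that $\iota_\bullet$ and $\pi_\bullet$ are chain maps. All three relative boundary maps $\partial_j^{B/A}$, $\partial_j^{C/A}$, $\partial_j^{C/B}$ are submatrices of the single matrix $\partial_j^{[n]}$, whose $(\sigma,\tau)$--entry vanishes unless $\sigma=\tau\setminus\{\tau_i\}$. For $\tau\in B_j\setminus A_j$ every codimension--one subface $\sigma\subset\tau$ lies in $B$ (hence in $C$) because $B$ and $C$ are simplicial complexes, so the nonzero entries of the $\tau$--column of $\partial_j^{[n]}$ are supported on the same faces $\sigma$ whether we delete those belonging to $A$ inside $\mathcal{C}_{j-1}(B,A)$ or inside $\mathcal{C}_{j-1}(C,A)$; this gives $\iota_{j-1}\partial_j^{B/A}=\partial_j^{C/A}\iota_j$. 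An analogous, slightly easier comparison (using $A\subseteq B$ together with the fact that if $\tau\in B_j$ then all its subfaces lie in $B$) gives $\pi_{j-1}\partial_j^{C/A}=\partial_j^{C/B}\pi_j$. Hence we have a genuine short exact sequence of chain complexes $0\to\mathcal{C}_\bullet(B,A)\to\mathcal{C}_\bullet(C,A)\to\mathcal{C}_\bullet(C,B)\to0$.

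Finally I would apply the zig--zag lemma: such a short exact sequence induces connecting homomorphisms $\delta_k\colon H_k(C,B)\to H_{k-1}(B,A)$, defined by the usual diagram chase (lift a cycle along $\pi$, apply $\partial^{C/A}$, pull back along $\iota$), and the long sequence $\cdots\to H_k(B,A)\xrightarrow{\iota_*}H_k(C,A)\xrightarrow{\pi_*}H_k(C,B)\xrightarrow{\delta_k}H_{k-1}(B,A)\to\cdots$ is exact; this is exactly the displayed diagram, with the diagonal arrow being $\delta_k$. To see the sequence terminates with $H_0(C,B)\to0$, note that $\emptyset$ belongs to every complex, so $X_{-1}\setminus Y_{-1}=\emptyset$ and $\mathcal{C}_{-1}(X,Y)=0$ for any pair, whence $H_{-1}(B,A)=0$ and the tail reads $\cdots\to H_0(C,B)\xrightarrow{\delta_0}H_{-1}(B,A)=0$. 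I do not expect a genuinely hard step here, the statement being classical: the only place demanding care is the chain--map verification in the previous paragraph, where closure of $B$ and $C$ under passing to subfaces is exactly what makes coordinate inclusion and projection intertwine the relative boundary operators; the zig--zag lemma itself, together with the elementary diagram chase that establishes exactness of the resulting sequence, I would quote wholesale from homological algebra.
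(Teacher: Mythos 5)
Your proof is correct: the short exact sequence of relative chain complexes $0\to\mathcal{C}_\bullet(B,A)\to\mathcal{C}_\bullet(C,A)\to\mathcal{C}_\bullet(C,B)\to0$ together with the zig--zag lemma is the standard argument, and you rightly identify the only setting-specific points — that downward closure of $B$ and $C$ makes the coordinate inclusion and projection chain maps for the boundary submatrices $\partial_j^{X/Y}$, and that $\mathcal{C}_{-1}(X,Y)=0$ (since $\emptyset$ lies in every complex) terminates the sequence at $H_0(C,B)\to0$. The paper itself quotes this theorem as classical background without proof, so there is nothing to compare beyond noting that your argument is exactly the proof it implicitly relies on.
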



\begin{lemma}\label{evenmoregeneral2/3}
For any $\Delta\in\mathscr{A}_n$, $X\in\mathscr{C}_{n,k}(\Delta)$, and $Y\in\mathscr{C}_{n,j}(X)$, any two of the following conditions imply the other: \begin{itemize}
    \item $|X_k|-|Y_k|=\rank\partial_k^{\Delta/Y}$,
    \item $\beta_{k-1}(X,Y)=\beta_{k-1}(\Delta,Y)$, 
    \item $\beta_k(X,Y)=0$.
\end{itemize}
\end{lemma}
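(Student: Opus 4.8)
The plan is to reduce all three conditions to equalities among the three nonnegative integers
$$ a := |X_k| - |Y_k|, \qquad b := \rank \partial_k^{X/Y}, \qquad c := \rank \partial_k^{\Delta/Y}, $$
so that the assertion collapses to the trivial fact that if two of the equalities $a=b$, $b=c$, $a=c$ hold then so does the third. The only tool needed is the per-degree refinement of Lemma \ref{homalg}: for a bounded chain complex of finitely generated free abelian groups one has $\beta_i = \rank\mathcal{C}_i - \rank\partial_i - \rank\partial_{i+1}$, which follows at once from rank--nullity over $\Q$ applied to the exact sequences $0\to\ker\partial_i\to\mathcal{C}_i\to\Image\partial_i\to 0$ and $0\to\Image\partial_{i+1}\to\ker\partial_i\to H_i\to 0$.

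First I would dispatch the third condition. Since $X\subseteq K_n^k\cap\Delta$ we have $\dim X\leq k$, so the relative chain complex $(\mathcal{C}_\bullet(X,Y),\partial_\bullet^{X/Y})$ vanishes in degrees exceeding $k$; in particular $\partial_{k+1}^{X/Y}$ has no columns. The refinement above at degree $k$ therefore gives $\beta_k(X,Y) = |X_k\setminus Y_k| - \rank\partial_k^{X/Y}$, which is exactly $a-b$ because $Y_k\subseteq X_k$. Hence $\beta_k(X,Y)=0$ if and only if $a=b$.

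Next I would treat the second condition by applying the refinement at degree $k-1$ to both $(X,Y)$ and $(\Delta,Y)$:
$$\beta_{k-1}(X,Y) = |X_{k-1}\setminus Y_{k-1}| - \rank\partial_{k-1}^{X/Y} - b, \qquad \beta_{k-1}(\Delta,Y) = |\Delta_{k-1}\setminus Y_{k-1}| - \rank\partial_{k-1}^{\Delta/Y} - c.$$
Because $X\in\mathscr{C}_{n,k}(\Delta)$ forces $X_{k-1}=\Delta_{k-1}$ and $X_{k-2}=\Delta_{k-2}$, the first summands on the two lines coincide, and the matrices $\partial_{k-1}^{X/Y}=\partial_{X_{k-2}\setminus Y_{k-2},\,X_{k-1}\setminus Y_{k-1}}$ and $\partial_{k-1}^{\Delta/Y}=\partial_{\Delta_{k-2}\setminus Y_{k-2},\,\Delta_{k-1}\setminus Y_{k-1}}$ are literally the same submatrix of $\partial_{k-1}^{[n]}$, so their ranks agree. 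Subtracting the two displayed equations yields $\beta_{k-1}(X,Y)-\beta_{k-1}(\Delta,Y) = c-b$, so the second condition holds if and only if $b=c$. Since the first condition is by definition $a=c$, the three bulleted conditions are precisely the three pairwise equalities among $a,b,c$, and the lemma follows.

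I do not anticipate a real obstacle: this is just the relative counterpart of the rank count underlying Lemma \ref{general2/3}, and the computation is uniform in $j$ (in the degenerate case $Y=X$ one gets $a=b=c=0$ and all three conditions hold automatically). The two points to keep honest are that in the relative setting $\partial_k^{\Delta/Y}$ is indexed by the rows $\Delta_{k-1}\setminus Y_{k-1}$ rather than all of $\Delta_{k-1}$, so that the bookkeeping in the previous paragraph is exact, and the observation that --- in contrast with Lemma \ref{general2/3} --- this relative version asserts nothing about $X$ being a tree, so no appeal to excision or the long exact sequences is required.
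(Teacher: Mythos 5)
Your proof is correct, but it takes a genuinely different route from the paper's. The paper deduces the lemma from the long exact sequence of the triple $(\Delta,X,Y)$ (which starts at $0$ because $H_{k+1}(X,Y)=0$), extracts the alternating-sum identity (\ref{exact}) among six Betti numbers, and then shows by a rank--nullity computation that the middle block $\beta_{k+1}(\Delta,Y)-\beta_{k+1}(\Delta,X)-\beta_k(\Delta,Y)+\beta_k(\Delta,X)$ equals $|Y_k|+\rank\partial_k^{\Delta/Y}-|X_k|$, i.e.\ vanishes exactly when the first bullet holds. You bypass the triple entirely: the per-degree formula $\beta_i=\rank\mathcal{C}_i-\rank\partial_i-\rank\partial_{i+1}$, applied to $(X,Y)$ in degrees $k$ and $k-1$ and to $(\Delta,Y)$ in degree $k-1$, combined with $X_{k-1}=\Delta_{k-1}$, $X_{k-2}=\Delta_{k-2}$ (so that $\partial_{k-1}^{X/Y}=\partial_{k-1}^{\Delta/Y}$) and $\partial_{k+1}^{X/Y}=0$, converts the three bullets into the three pairwise equalities $a=c$, $b=c$, $a=b$ among $a=|X_k|-|Y_k|$, $b=\rank\partial_k^{X/Y}$, $c=\rank\partial_k^{\Delta/Y}$, and the bookkeeping (including the row index set $\Delta_{k-1}\setminus Y_{k-1}$ of $\partial_k^{\Delta/Y}$) checks out. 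Your version is more elementary---only linear algebra over $\Q$---and arguably sharper, since it identifies each individual bullet with one pairwise equality and makes $\beta_k(X,Y)=a-b$ and $\beta_{k-1}(X,Y)-\beta_{k-1}(\Delta,Y)=c-b$ explicit. What the paper's homological route buys is the exact sequence itself, which is reused right afterwards: the surjection $H_{k-1}(X,Y)\to H_{k-1}(\Delta,Y)$ yields $\beta_{k-1}(X,Y)\geq\beta_{k-1}(\Delta,Y)$ in the corollary characterizing $\mathscr{F}_{n,k}^{\operatorname{root}}(\Delta)$; note, though, that your identity recovers this too, since $\partial_k^{X/Y}$ is a column submatrix of $\partial_k^{\Delta/Y}$, whence $c\geq b$.
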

\begin{proof}
The long exact sequence of the triple $(\Delta,X,Y)$ contains the sequence
$$\begin{tikzcd}
&0\arrow{r}&H_{k+1}(\Delta,Y)\arrow{r}&H_{k+1}(\Delta,X)\arrow{dll} \\ 
&H_k(X,Y)\arrow{r}&H_k(\Delta,Y)\arrow{r}&H_k(\Delta,X)\arrow{dll} \\ 
&H_{k-1}(X,Y)\arrow{r}&H_{k-1}(\Delta,Y)\arrow{r}&0. 
\end{tikzcd}$$


The exactness of this sequence implies that 
\begin{equation}\label{exact}
\beta_k(X,Y)+\big(\beta_{k+1}(\Delta,Y)-\beta_{k+1}(\Delta,X)-\beta_k(\Delta,Y)+\beta_k(\Delta,X)\big)-\big(\beta_{k-1}(X,Y)-\beta_{k-1}(\Delta,Y)\big)=0.
\end{equation} 

It suffices to show that the  first bulleted condition is equivalent to the vanishing of the middle term in (\ref{exact}). Indeed, by the definition of relative homology, the rank--nullity theorem, dimensional considerations, and the fact that $\Delta,X$ have the same $k-1$--skeleton, we have 
\begin{align*}
\beta_k(\Delta,X)-\beta_{k+1}(\Delta,X)&=\dim\ker\partial_k^{\Delta/X}-(\rank\partial_{k+1}^{\Delta/X}+\dim\ker\partial_{k+1}^{\Delta/X})+\rank\partial_{k+2}^{\Delta/X} \\ 
&=\dim\ker\partial_k^{\Delta/X}-|(\Delta/X)_{k+1}|+\rank\partial_{k+2}^{\Delta} \\ 
&=|(\Delta/X)_k|-|\Delta_{k+1}|+\rank\partial_{k+2}^{\Delta}. 
\end{align*}
By most of the same considerations, $\beta_k(\Delta,Y)-\beta_{k+1}(\Delta,Y)=\dim\ker\partial_k^{\Delta/Y}-|\Delta_{k+1}|+\rank\partial_{k+2}^{\Delta}$. So 
\begin{align*}
&\beta_{k+1}(\Delta,Y)-\beta_{k+1}(\Delta,X)-\beta_k(\Delta,Y)+\beta_k(\Delta,X) \\ 
=&|\Delta_k|-|X_k|-\dim\ker\partial_k^{\Delta/Y} \\ 
=&|Y_k|+(|(\Delta/Y)_k|-\dim\ker\partial_k^{\Delta/Y})-|X_k| \\ 
=&|Y_k|+\rank\partial^{\Delta/Y}_k-|X_k|
\end{align*}
is equal to 0 if and only if the first bulleted condition holds. 
The result therefore follows from (\ref{exact}). 
\end{proof}

Whenever two, and therefore all three, of the conditions of this lemma hold, we will call $(X,Y)$ a \emph{relative $(n,k)$--forest of $\Delta$}. We will denote the set of such pairs by  $\F_{n,k}^{\operatorname{rel}}(\Delta)$. 
In the special case that $Y$ is contractible (or just that $\dim Y<k-2$) we get that $X\in\mathscr{T}_{n,k}(\Delta)$ if and only if $(X,Y)\in\mathscr{F}_{n,k}^{\operatorname{rel}}(\Delta)$. We therefore have the following corollary which gives the original \cite{Kalai} three equivalent necessary and sufficient pairs of conditions for a $k$--complex of a homologically connected $\Delta$ (i.e. $\beta_{k-1}(\Delta)=0$) to be a $k$--tree of $\Delta$: 
\begin{corollary}\label{2/3} 
For any $\Delta\in\mathscr{A}_n$ with $\beta_{k-1}(\Delta)=0$ and $X\in\mathscr{C}_{n,k}(\Delta)$, any two of the following conditions imply the other, and in particular are equivalent to the statement that $X\in\mathscr{T}_{n,k}(\Delta)$:
\begin{itemize}
    \item $|X_k|={{n-1}\choose k}$, 
    \item $\beta_k(X)=0$,
    \item $\beta_{k-1}(X)=0$.
\end{itemize}
\end{corollary}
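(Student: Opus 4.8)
The plan is to derive this as the specialization of Lemma~\ref{evenmoregeneral2/3}, together with the observation recorded shortly after it (that for contractible $Y$ the pair $(X,Y)$ is a relative $(n,k)$--forest of $\Delta$ exactly when $X\in\mathscr{T}_{n,k}(\Delta)$), to the choice $Y=\{\emptyset\}$. Since $\dim\{\emptyset\}=-1$, the complex $Y$ is contractible and $\{\emptyset\}_j=\emptyset$ for every $j\ge 0$, so $|Y_k|=0$ and $\partial_k^{\Delta/Y}=\partial_{\Delta_{k-1},\Delta_k}=\partial_k^{\Delta}$; moreover $H_j(X,\{\emptyset\})\cong\tilde H_j(X)$ and $H_j(\Delta,\{\emptyset\})\cong\tilde H_j(\Delta)$ for all $j$ by the identification of relative with reduced homology noted above for such $Y$, whence $\beta_j(X,Y)=\beta_j(X)$ and $\beta_j(\Delta,Y)=\beta_j(\Delta)$. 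Feeding the triple $(\Delta,X,\{\emptyset\})$ into Lemma~\ref{evenmoregeneral2/3} therefore turns its three conditions into: (i) $|X_k|=\rank\partial_k^{\Delta}$; (ii) $\beta_{k-1}(X)=\beta_{k-1}(\Delta)$; (iii) $\beta_k(X)=0$. Condition (iii) is already the second bullet, and using the hypothesis $\beta_{k-1}(\Delta)=0$, condition (ii) becomes $\beta_{k-1}(X)=0$, i.e.\ the third bullet.

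What remains is to match (i) with the first bullet, i.e.\ to check $\rank\partial_k^{\Delta}=\binom{n-1}{k}$. In the setting at hand $\Delta$ has full $(k-1)$--skeleton (as in Kalai's case $\Delta=K_n^{n-1}$, or more generally $\Delta\supseteq K_n^{k-1}$), so $\partial_{k-1}^{\Delta}=\partial_{k-1}^{[n]}$ and hence $0=\beta_{k-1}(\Delta)=\dim\ker\partial_{k-1}^{[n]}-\rank\partial_k^{\Delta}$, reducing the claim to $\dim\ker\partial_{k-1}^{[n]}=\binom{n-1}{k}$, equivalently $\rank\partial_{k-1}^{[n]}=\binom{n}{k}-\binom{n-1}{k}=\binom{n-1}{k-1}$. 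I would prove $\rank\partial_m^{[n]}=\binom{n-1}{m}$ for all $0\le m\le n-1$ by induction on $m$: the base case is that $\partial_0^{[n]}$ is the $1\times n$ all--ones matrix, of rank $1$, and the inductive step uses Lemma~\ref{Hodgey}, which exhibits the orthogonal decomposition $\Z^{\binom{[n]}{m}}=\Image\partial_m^{[n]}\oplus\Image\partial_{m-1}^{[n]t}$ and so gives $\rank\partial_m^{[n]}+\rank\partial_{m-1}^{[n]}=\binom{n}{m}$; combined with $\binom{n}{m}-\binom{n-1}{m-1}=\binom{n-1}{m}$ this closes the induction. (If one assumes the stronger $\Delta\supseteq K_n^{k}$, then $\partial_k^{\Delta}=\partial_k^{[n]}$ outright and the hypothesis $\beta_{k-1}(\Delta)=0$ is not even needed at this point.)

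With (i)--(iii) matched to the three bullets, Lemma~\ref{evenmoregeneral2/3} gives at once that any two of the bullets imply the third. For the ``in particular'' clause: once all three hold, $(X,\{\emptyset\})$ is a relative $(n,k)$--forest of $\Delta$, so by the cited observation $X\in\mathscr{T}_{n,k}(\Delta)$ --- equivalently, (iii) says $X$ is a $k$--forest of $\Delta$ and (i) says $|X_k|=\binom{n-1}{k}=\rank\partial_k^{\Delta}$ is the maximal possible value, i.e.\ $X$ is a $k$--tree --- and the same observation read backwards gives the converse. I do not anticipate a real obstacle: the argument is essentially bookkeeping that specializes the already--proven relative statement, the only piece of genuine computation being the rank identity $\rank\partial_k^{\Delta}=\binom{n-1}{k}$, a short induction off Lemma~\ref{Hodgey}.
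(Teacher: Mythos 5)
Your proposal matches the paper's own (implicit) argument: the corollary carries no separate proof precisely because it is the specialization of Lemma~\ref{evenmoregeneral2/3} to a trivial/contractible $Y$, with the relative Betti numbers becoming reduced ones and the hypothesis $\beta_{k-1}(\Delta)=0$ turning the middle condition into the third bullet. The one step you supply that the paper leaves implicit --- verifying $\rank\partial_k^{\Delta}=\binom{n-1}{k}$ via Lemma~\ref{Hodgey} (this is Kalai's Lemma~1) --- is correct, and you are right that it requires $\Delta$ to contain the full $(k-1)$--skeleton (e.g.\ $\Delta\supseteq K_n^k$, the intended Kalai setting), an assumption the corollary's literal wording omits but without which the first bullet should be read as $|X_k|=\rank\partial_k^{\Delta}$.
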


An $R\in\mathscr{C}_{n,k-1}(\Delta)$ is called a \emph{$k-1$--root of $\Delta$} if $\partial^{\Delta/R}_k$ is surjective and has full rank (\cite{BK}, Definition 9). 
Whenever any two of the conditions in the following lemma hold for a pair $(X,Y)\in\mathscr{C}_{n,k}(\Delta)\times\mathscr{C}_{n,k-1}(\Delta)$, we will call $(X,Y)$ a \emph{rooted $(n,k)$--forest of $\Delta$} and write $(X,Y)\in\mathscr{F}_{n,k}^{\operatorname{root}}(\Delta)$. Equivalently, a rooted $(n,k)$--forest of $\Delta$ is a pair $(X,Y)\in\mathscr{F}_{n,k}(\Delta)\times\mathscr{C}_{n,k-1}(\Delta)$ such that $Y$ is a $k-1$--root of $X$ (\cite{BK}, Definition 12). 
\begin{lemma}\label{moregeneral2/3}
For $X\in\mathscr{C}_{n,k}(\Delta)$ and $Y\in\mathscr{C}_{n,k-1}(\Delta)$, any two of the following conditions imply the other:
\begin{itemize}
    \item $|X_k|=|X_{k-1}\setminus Y_{k-1}|$, 
    \item $\beta_{k-1}(X,Y)=0$, 
    \item $\beta_k(X,Y)=0$.
\end{itemize}
\end{lemma}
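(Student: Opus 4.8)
The plan is to reduce the whole statement to the Euler--characteristic identity of Lemma \ref{homalg} applied to the relative chain complex of $(X,Y)$, after which it becomes a triviality about three nonnegative integers.

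First I would pin down the relative chain complex $(\mathcal{C}_{\#}(X,Y),\partial_{\#}^{X/Y})$ precisely. Because $X\in\mathscr{C}_{n,k}(\Delta)$ forces $X_j=\Delta_j$ for all $j\leq k-1$ and $X_j=\emptyset$ for $j\geq k+1$, while $Y\in\mathscr{C}_{n,k-1}(\Delta)$ forces $Y_j=\Delta_j$ for $j\leq k-2$ and $Y_j=\emptyset$ for $j\geq k$, the relative chain group $\mathcal{C}_j(X,Y)=\Z^{X_j\setminus Y_j}$ vanishes unless $j\in\{k-1,k\}$; in those two degrees it is free of rank $|X_{k-1}\setminus Y_{k-1}|$ and $|X_k|$ (using $Y_k=\emptyset$), and the only nontrivial boundary map is $\partial_k^{X/Y}\colon\Z^{X_k}\to\Z^{X_{k-1}\setminus Y_{k-1}}$. (In particular $Y\subseteq X$ automatically, since $Y\subseteq K_n^{k-1}\cap\Delta\subseteq X$.)

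Next I would invoke Lemma \ref{homalg}: the relative complex is freely and finitely generated with only finitely many nontrivial chain groups, so the alternating sum of its face numbers equals the alternating sum of its Betti numbers. By the degree count just made, this collapses to
$$|X_{k-1}\setminus Y_{k-1}|-|X_k|=\beta_{k-1}(X,Y)-\beta_k(X,Y).$$
Hence the first bulleted condition, $|X_k|=|X_{k-1}\setminus Y_{k-1}|$, holds if and only if $\beta_{k-1}(X,Y)=\beta_k(X,Y)$. The three conditions of the lemma are therefore, in order, the statements ``$\beta_{k-1}(X,Y)=\beta_k(X,Y)$'', ``$\beta_{k-1}(X,Y)=0$'', and ``$\beta_k(X,Y)=0$'', and any two of these three statements about the pair of nonnegative integers $\bigl(\beta_{k-1}(X,Y),\beta_k(X,Y)\bigr)$ plainly force the third. (Alternatively, one can bypass Lemma \ref{homalg} and argue by rank--nullity applied to $\partial_k^{X/Y}$: writing $\rho$ for its rank, one has $\beta_k(X,Y)=|X_k|-\rho$ and $\beta_{k-1}(X,Y)=|X_{k-1}\setminus Y_{k-1}|-\rho$, so the three conditions read ``$|X_k|=|X_{k-1}\setminus Y_{k-1}|$'', ``$\rho=|X_{k-1}\setminus Y_{k-1}|$'', ``$\rho=|X_k|$'', which again satisfy the two-implies-the-third pattern.)

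There is essentially no serious obstacle. The only points requiring care are both in the first step: extracting from the definition of $\mathscr{C}_{n,\bullet}(\Delta)$ that $X$ and $Y$ share their $(k-2)$--skeleton so that the relative complex is concentrated in exactly degrees $k-1$ and $k$, and the harmless remark that passing from $H_{k-1}(X,Y)$ to $\beta_{k-1}(X,Y)$ discards possible torsion without affecting the rank bookkeeping. Everything after that is formal.
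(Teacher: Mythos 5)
Your proof is correct and follows essentially the same route as the paper: apply Lemma \ref{homalg} to the relative chain complex of $(X,Y)$, note that it is concentrated in degrees $k-1$ and $k$ because $X$ and $Y$ share the $(k-2)$--skeleton of $\Delta$, and read off $|X_k|-|X_{k-1}\setminus Y_{k-1}|=\beta_k(X,Y)-\beta_{k-1}(X,Y)$, from which the two-imply-the-third claim is immediate. Your rank--nullity aside is a harmless equivalent reformulation of the same computation.
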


\begin{proof}
By Lemma \ref{homalg}, we have
$\sum_{j=-1}^k(-1)^j(|X_j\setminus Y_j|)=\sum_{j=-1}^k(-1)^j\beta_j(X,Y)$, which simplifies to 
$$|X_k|-|X_{k-1}\setminus Y_{k-1}|=\beta_k(X,Y)-\beta_{k-1}(X,Y).$$
Indeed, since $Y_j=X_j=\Delta_j$ for all $j<k-1$, we have $\mathcal{C}_j(X,Y)=0$ for all $j<k-1$. As $H_j(X,Y)$ is a quotient of a subgroup of $\mathcal{C}_j(X,Y)$, the former vanishes with the latter. 
\end{proof} 

\begin{corollary}
For any $\Delta\in\mathscr{A}_n$, we have $$\mathscr{F}_{n,k}^{\operatorname{root}}(\Delta)=\left\{(X,Y)\in\mathscr{F}_{n,k}^{\operatorname{rel}}(\Delta)\cap\big(\mathscr{C}_{n,k}(\Delta)\times\mathscr{C}_{n,k-1}(\Delta)\big):\beta_{k-1}(\Delta,Y)=0\right\}.$$ 
\end{corollary}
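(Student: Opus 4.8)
The plan is to prove the set identity by showing that \emph{both} sides coincide with the collection $\mathscr{S}$ of all pairs $(X,Y)\in\mathscr{C}_{n,k}(\Delta)\times\mathscr{C}_{n,k-1}(\Delta)$ for which $\beta_k(X,Y)=0$ and $\beta_{k-1}(X,Y)=0$. Before doing so I would record two structural facts that make the bookkeeping go through. First, every $X\in\mathscr{C}_{n,k}(\Delta)$ satisfies $X_j=\Delta_j$ for all $j\le k-1$, i.e.\ it carries the full $(k-1)$--skeleton of $\Delta$. Second, for any $Y\in\mathscr{C}_{n,k-1}(\Delta)$ one checks directly from the definitions that $Y\subseteq X$, that $Y\in\mathscr{C}_{n,k-1}(X)$, and that $Y_k=\emptyset$. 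Hence every pair $(X,Y)$ occurring in the statement is a legitimate input both to Lemma \ref{evenmoregeneral2/3} (with $j=k-1$, so that its first bullet reads simply $|X_k|=\rank\partial_k^{\Delta/Y}$) and to Lemma \ref{moregeneral2/3}.

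Granting this, the equality $\mathscr{F}_{n,k}^{\operatorname{root}}(\Delta)=\mathscr{S}$ is immediate: by Lemma \ref{moregeneral2/3} the two conditions $\beta_k(X,Y)=0$ and $\beta_{k-1}(X,Y)=0$ together imply its third, and their conjunction is exactly the defining property of a rooted $(n,k)$--forest. One half of the other equality is equally quick: if $(X,Y)$ lies in the right-hand set, then $(X,Y)\in\mathscr{F}_{n,k}^{\operatorname{rel}}(\Delta)$ makes all three conditions of Lemma \ref{evenmoregeneral2/3} hold, so $\beta_k(X,Y)=0$ and $\beta_{k-1}(X,Y)=\beta_{k-1}(\Delta,Y)$, and the supplementary hypothesis $\beta_{k-1}(\Delta,Y)=0$ then yields $\beta_{k-1}(X,Y)=0$; thus $(X,Y)\in\mathscr{S}$.

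The step carrying the actual content is the reverse inclusion, that every $(X,Y)\in\mathscr{S}$ lies in the right-hand set. This reduces to the single assertion $\beta_{k-1}(\Delta,Y)=0$: once that is established, $\beta_{k-1}(X,Y)=0=\beta_{k-1}(\Delta,Y)$ and $\beta_k(X,Y)=0$ are two of the three conditions of Lemma \ref{evenmoregeneral2/3}, giving $(X,Y)\in\mathscr{F}_{n,k}^{\operatorname{rel}}(\Delta)$, and $\beta_{k-1}(\Delta,Y)=0$ is precisely the extra requirement. To obtain $\beta_{k-1}(\Delta,Y)=0$ I would feed the triple $Y\subseteq X\subseteq\Delta$ into the long exact sequence of a triple, extracting the fragment
$$H_{k-1}(X,Y)\longrightarrow H_{k-1}(\Delta,Y)\longrightarrow H_{k-1}(\Delta,X).$$
Since $X$ and $\Delta$ share their $j$--skeleton for all $j\le k-1$, the relative chain groups $\mathcal{C}_j(\Delta,X)=\Z^{\Delta_j\setminus X_j}$ vanish in those degrees, whence $H_{k-1}(\Delta,X)=0$ and the left arrow above is surjective. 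Because $\beta_{k-1}(X,Y)=0$ says that the finitely generated group $H_{k-1}(X,Y)$ is entirely torsion, its quotient $H_{k-1}(\Delta,Y)$ is finite as well, i.e.\ $\beta_{k-1}(\Delta,Y)=0$.

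I expect the surjectivity of $H_{k-1}(X,Y)\to H_{k-1}(\Delta,Y)$ — equivalently the vanishing $H_{k-1}(\Delta,X)=0$ — to be the one point requiring care, and it is exactly here that the convention that members of $\mathscr{C}_{n,k}(\Delta)$ retain the full $(k-1)$--skeleton of $\Delta$ is used. (If one prefers to bypass the exact sequence, the same quotient map can be written down by hand: $\mathcal{C}_{k-1}(X,Y)=\mathcal{C}_{k-1}(\Delta,Y)$ and $\mathcal{C}_{k-2}(X,Y)=\mathcal{C}_{k-2}(\Delta,Y)=0$, while $\Image\partial_k^{X/Y}\subseteq\Image\partial_k^{\Delta/Y}$, which exhibits $H_{k-1}(\Delta,Y)$ directly as a quotient of $H_{k-1}(X,Y)$.) Everything else is a matter of unwinding the definitions against Lemmas \ref{evenmoregeneral2/3} and \ref{moregeneral2/3}.
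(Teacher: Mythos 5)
Your proof is correct and follows essentially the same route as the paper: the containment of the right-hand set in $\mathscr{F}_{n,k}^{\operatorname{root}}(\Delta)$ by unwinding Lemmas \ref{evenmoregeneral2/3} and \ref{moregeneral2/3}, and the reverse inclusion via the long exact sequence of the triple $(Y,X,\Delta)$, where $H_{k-1}(\Delta,X)=0$ makes $H_{k-1}(\Delta,Y)$ a quotient of the torsion group $H_{k-1}(X,Y)$, forcing $\beta_{k-1}(\Delta,Y)=0$. Your explicit verification that $H_{k-1}(\Delta,X)=0$ (via the shared $(k-1)$--skeleton) just spells out what is already built into the displayed sequence in the paper's Lemma \ref{evenmoregeneral2/3}.
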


\begin{proof}
Clearly $\mathscr{F}_{n,k}^{\operatorname{root}}(\Delta)\supseteq\{(X,Y)\in\mathscr{F}_{n,k}^{\operatorname{rel}}(\Delta)\cap\big(\mathscr{C}_{n,k}(\Delta)\times\mathscr{C}_{n,k-1}(\Delta)\big):\beta_{k-1}(\Delta,Y)=0\}$. For the other inclusion, suppose that $(X,Y)\in\mathscr{F}_{n,k}^{\operatorname{root}}(\Delta)$. Then the long exact sequence in the proof of Lemma \ref{evenmoregeneral2/3} implies that $0=\beta_{k-1}(X,Y)\geq\beta_{k-1}(\Delta,Y)\geq0$. Thus $(X,Y)\in\mathscr{F}_{n,k}^{\operatorname{rel}}(\Delta)$ since $\beta_{k-1}(X,Y)=0=\beta_{k-1}(\Delta,Y)$ and $\beta_k(X,Y)=0$. 
\end{proof}




\begin{lemma}\label{ABD}
Suppose $\Delta\in\mathscr{A}_n$ and that $F\in\mathscr{C}_{n,k}(\Delta)$, $R\in\mathscr{C}_{n,k-1}(\Delta)$ satisfy $|F_k|=|\Delta_{k-1}\setminus R_{k-1}|$, and write $\overline{R}:=\Delta_{k-1}\setminus R_{k-1}$. Then 
$\det\partial_{\overline{R},F_k}\neq0\iff|\det\partial_{\overline{R},F_k}|=|H_{k-1}(F,R)|\iff(F,R)\in\mathscr{F}_{n,k}^{\operatorname{root}}(\Delta)$. 
\end{lemma}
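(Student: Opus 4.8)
The plan is to first identify the relative chain complex of the pair $(F,R)$ as a two--term complex, and then read off everything from elementary linear algebra over $\Z$ together with Lemma \ref{moregeneral2/3}. Since $F\in\mathscr{C}_{n,k}(\Delta)$ and $R\in\mathscr{C}_{n,k-1}(\Delta)$, we have $F_j=R_j=\Delta_j$ for every $j\leq k-2$, $F_{k-1}=\Delta_{k-1}\supseteq R_{k-1}$, $R_k=\emptyset$, and $F_j=\emptyset$ for $j>k$, so $\mathcal{C}_j(F,R)=0$ unless $j\in\{k-1,k\}$ and the relative chain complex collapses to
$$0\longrightarrow\Z^{F_k}\xrightarrow{\ \partial_k^{F/R}\ }\Z^{\overline{R}}\longrightarrow0,$$
concentrated in degrees $k$ and $k-1$, where $\partial_k^{F/R}=\partial_{\overline{R},F_k}$ is a \emph{square} integer matrix by the hypothesis $|F_k|=|\overline{R}|$. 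Hence $H_k(F,R)=\ker\partial_{\overline{R},F_k}$, a subgroup of the free group $\Z^{F_k}$ and so itself free, while $H_{k-1}(F,R)=\Z^{\overline{R}}/\Image\partial_{\overline{R},F_k}$.

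For the equivalence $\det\partial_{\overline{R},F_k}\neq0\iff(F,R)\in\mathscr{F}_{n,k}^{\operatorname{root}}(\Delta)$, note that the standing hypothesis $|F_k|=|\overline{R}|=|F_{k-1}\setminus R_{k-1}|$ is precisely the first bulleted condition of Lemma \ref{moregeneral2/3}; by that lemma, $(F,R)$ is a rooted $(n,k)$--forest of $\Delta$ if and only if $\beta_k(F,R)=0$. Because $H_k(F,R)=\ker\partial_{\overline{R},F_k}$ is free, $\beta_k(F,R)=0$ holds exactly when $\ker\partial_{\overline{R},F_k}=0$, i.e.\ when the square integer matrix $\partial_{\overline{R},F_k}$ is injective, i.e.\ when $\det\partial_{\overline{R},F_k}\neq0$.

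For the equivalence $\det\partial_{\overline{R},F_k}\neq0\iff|\det\partial_{\overline{R},F_k}|=|H_{k-1}(F,R)|$, write $M:=\partial_{\overline{R},F_k}$ and pass to Smith normal form: if $\det M\neq0$ then $\Z^{\overline{R}}/\Image M$ is finite of order equal to the product of the invariant factors of $M$, which is $|\det M|$, so $|H_{k-1}(F,R)|=|\det M|$; if $\det M=0$ then $\Z^{\overline{R}}/\Image M$ has positive free rank, so $|H_{k-1}(F,R)|=\infty\neq0=|\det M|$. This establishes both remaining implications. There is no serious obstacle here: the work is the reduction to a two--term complex together with the observation that $F$ and $R$ carry the full skeleta of $\Delta$ below their dimensions, making $\partial_{\overline{R},F_k}$ literally the top relative boundary map and square; the only points needing a little care are that $H_k(F,R)$ is torsion--free (so that ``$\beta_k=0$'' and ``the group vanishes'' coincide, as in the earlier forest discussion) and the degenerate case $\det M=0$, where one must note that $|H_{k-1}(F,R)|$ is genuinely infinite so the claimed numerical equality fails.
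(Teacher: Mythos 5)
Your proposal is correct and takes essentially the same route as the paper: both identify the relative chain complex of $(F,R)$ as the two--term complex with square boundary matrix $\partial_{\overline{R},F_k}$, use Smith normal form to equate $|\det\partial_{\overline{R},F_k}|$ with $|H_{k-1}(F,R)|$ when the determinant is nonzero, and invoke Lemma \ref{moregeneral2/3} (via the standing hypothesis $|F_k|=|\overline{R}|$) for the rooted--forest equivalence. The only cosmetic difference is that you establish two biconditionals directly, while the paper closes a cycle of implications among the three conditions.
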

\begin{proof}
The relative chain complex for the pair $(F,R)$ is 
$\begin{tikzcd}
0\arrow{r}&\mathcal{C}_k(F)\arrow{r}{\partial_{\overline{R},F_k}}&\mathcal{C}_{k-1}(\overline{R})\arrow{r}&0\arrow{r}&\cdots
\end{tikzcd}$ since $R_k=\emptyset$, $F_{k-1}\setminus R_{k-1}=\Delta_{k-1}\setminus R_{k-1}=\overline{R}$, and $F_{k-2}=R_{k-2}$. We therefore have $H_{k-1}(F,R)=\Z^{\overline{R}}/\partial_{\overline{R},F_k}\Z^{F_k}$ and so, provided $\det\partial_{\overline{R},F_k}\neq0$, we have $|H_{k-1}(F,R)|=|\det\partial_{\overline{R},F_k}|$---this is seen most easily by putting  $\partial_{\overline{R},F_k}$ in Smith normal form. Having $|H_{k-1}(F,R)|=|\det\partial_{\overline{R},F_k}|$ clearly implies $|H_{k-1}(F,R)|<\infty$ which, by the previous lemma, implies that $(F,R)\in\mathscr{F}_{n,k}^{\operatorname{root}}(\Delta)$. Finally, $(F,R)\in\mathscr{F}_{n,k}^{\operatorname{root}}(\Delta)\implies\beta_k(F,R)=0\implies \ker\partial_{\overline{R},F_k}=H_k(F,R)=0\implies\det\partial_{\overline{R},F_k}\neq0$. 
\end{proof}
Essentially all of the content of this last lemma is proven in greater detail in Lemmas 14 and 17 of \cite{BK}. 

\begin{theorem}\label{submain}
Set $T=F\cup\operatorname{Cone}(n,R)$ where $F\in\mathscr{C}_{n-1,k}(\Delta)$ and $R\in\mathscr{C}_{n-1,k-1}(\Delta)$ for any $\Delta\in\mathscr{A}_{n-1}$. Then $\tilde H_*(T)\cong H_*(F,R)$, and $T\in\mathscr{T}_{n,k}(\operatorname{Cone}(n,\Delta))$ if and only if $(F,R)\in\mathscr{F}_{n-1,k}^{\operatorname{root}}(\Delta)$. Moreover the binomial correspondence restricts to a bijection $\varphi:\mathscr{T}_{n,k}(\operatorname{Cone}(n,\Delta))\xrightarrow{\sim}\mathscr{F}_{n-1,k}^{\operatorname{root}}(\Delta)$. \end{theorem}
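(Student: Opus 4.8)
The plan is to prove the three assertions in sequence, leveraging the relative‑homology machinery developed in the preceding lemmas, with Lemma~\ref{binomial} supplying the combinatorial bookkeeping and Lemma~\ref{ABD} and Lemma~\ref{moregeneral2/3} supplying the homological equivalences.

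\medskip

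\noindent\textbf{Step 1: The homology isomorphism $\tilde H_*(T)\cong H_*(F,R)$.} I would first observe that $T=F\cup\operatorname{Cone}(n,R)$ decomposes as $A\cup B$ with $A=\operatorname{Cone}(n,R)$ and $B=F$, so that $A\cap B=R$ (since $F\subseteq\Delta$ lives on $[n-1]$ and meets $\operatorname{Cone}(n,R)$ exactly in $R$). The Excision Theorem then gives $H_*(T,A)=H_*(A\cup B,A)\cong H_*(B,A\cap B)=H_*(F,R)$. Since $A=\operatorname{Cone}(n,R)$ is contractible, the long exact sequence of the pair $(T,A)$ collapses to give $\tilde H_*(T)\cong H_*(T,A)$; composing the two isomorphisms yields $\tilde H_*(T)\cong H_*(F,R)$. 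One must check $F\subseteq T$ and $R\subseteq T$ so these pairs are legitimate, and that $R\subseteq F$ is \emph{not} needed — only $R\subseteq\operatorname{Cone}(n,R)$ and $R=A\cap B$; I would spell out the face‑set identities $A_j\cap B_j = R_j$ explicitly, since that equality is the crux of the excision step.

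\medskip

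\noindent\textbf{Step 2: The tree/rooted‑forest equivalence.} By Lemma~\ref{binomial}, $T=\varphi^{-1}(F,R)\in\mathscr{C}_{n,k}(\operatorname{Cone}(n,\Delta))$, so it makes sense to ask whether $T$ is a $k$‑tree. Using Corollary~\ref{2/3} — valid because $\operatorname{Cone}(n,\Delta)\supseteq K_n^k$ would be needed for the precise cardinality ${{n-1}\choose k}$, so instead I would invoke the more general Lemma~\ref{general2/3} applied to $\operatorname{Cone}(n,\Delta)$ — the condition $T\in\mathscr{T}_{n,k}(\operatorname{Cone}(n,\Delta))$ is equivalent to $\beta_k(T)=0$ together with $\beta_{k-1}(T)=\beta_{k-1}(\operatorname{Cone}(n,\Delta))$. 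Since $\operatorname{Cone}(n,\Delta)$ is contractible, the latter reads $\beta_{k-1}(T)=0$. By Step~1, $\beta_k(T)=\beta_k(F,R)$ and $\beta_{k-1}(T)=\beta_{k-1}(F,R)$. Now I would bring in Lemma~\ref{moregeneral2/3}: the pair $(F,R)$ is a rooted $(n-1,k)$‑forest of $\Delta$ iff two of the three conditions there hold, and one checks that the cardinality condition $|F_k|=|F_{k-1}\setminus R_{k-1}|$ is automatic from the standing hypothesis — wait, it is not assumed here, so I must instead argue as follows: $\beta_k(F,R)=\beta_{k-1}(F,R)=0$ directly gives, via the Euler‑characteristic identity in the proof of Lemma~\ref{moregeneral2/3}, that $|F_k|=|F_{k-1}\setminus R_{k-1}|$, hence all three conditions hold and $(F,R)\in\mathscr{F}_{n-1,k}^{\operatorname{root}}(\Delta)$. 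Conversely, if $(F,R)\in\mathscr{F}_{n-1,k}^{\operatorname{root}}(\Delta)$ then $\beta_k(F,R)=\beta_{k-1}(F,R)=0$, so $\beta_k(T)=\beta_{k-1}(T)=0$ and $T$ is a $k$‑tree.

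\medskip

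\noindent\textbf{Step 3: The bijection.} Finally I would assemble the bijection. Lemma~\ref{binomial} already gives that $\varphi$ and $\varphi^{-1}$ are mutually inverse on the full $\mathscr{C}$‑spaces; Steps~1--2 show $\varphi^{-1}$ carries $\mathscr{F}_{n-1,k}^{\operatorname{root}}(\Delta)$ into $\mathscr{T}_{n,k}(\operatorname{Cone}(n,\Delta))$ and, conversely, that any $T\in\mathscr{T}_{n,k}(\operatorname{Cone}(n,\Delta))$ has $\varphi(T)=(\operatorname{Proj}(n,T),\operatorname{Link}(n,T))\in\mathscr{F}_{n-1,k}^{\operatorname{root}}(\Delta)$ — here one uses that $T=\varphi^{-1}(\varphi(T))$ from Lemma~\ref{binomial} so that Step~2 applies with $(F,R)=\varphi(T)$. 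Thus $\varphi$ restricts to a bijection between the two sets.

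\medskip

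\noindent\textbf{Anticipated main obstacle.} The routine part is the diagram‑chase isomorphisms; the genuinely delicate point is \emph{bookkeeping the cardinality condition}. Lemma~\ref{moregeneral2/3} and the definition of $\mathscr{F}_{n,k}^{\operatorname{root}}$ presuppose a pair in $\mathscr{C}_{n,k}(\Delta)\times\mathscr{C}_{n,k-1}(\Delta)$ but impose the equality $|F_k|=|F_{k-1}\setminus R_{k-1}|$ only as one of the ``two of three'' conditions, so I must be careful to derive it from vanishing Betti numbers rather than assume it, and to verify it is consistent with $F$ and $R$ ranging over all of $\mathscr{C}_{n-1,k}(\Delta)\times\mathscr{C}_{n-1,k-1}(\Delta)$ in the statement of the binomial correspondence. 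A secondary subtlety is making sure all the relative‑homology statements are applied to $\Delta'=\operatorname{Cone}(n,\Delta)$ on vertex set $[n]$ versus $\Delta$ on $[n-1]$ without an index slip, in particular that $\beta_{k-1}(\operatorname{Cone}(n,\Delta))=0$ so that Corollary~\ref{2/3}'s hypothesis is met.
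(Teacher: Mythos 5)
Your proposal is correct and takes essentially the same route as the paper: excision applied to $T=F\cup\operatorname{Cone}(n,R)$ with $F\cap\operatorname{Cone}(n,R)=R$, contractibility of the cone to get $\tilde H_*(T)\cong H_*(F,R)$, then Lemma \ref{general2/3} (with $\beta_{k-1}(\operatorname{Cone}(n,\Delta))=0$) and Lemma \ref{moregeneral2/3} for the tree/rooted--forest equivalence, and Lemma \ref{binomial} for the bijection. One small correction: the identity $F\cap\operatorname{Cone}(n,R)=R$ does require $R\subseteq F$ (otherwise the intersection is only $F\cap R$), but this containment is automatic because $R\subseteq K_{n-1}^{k-1}\cap\Delta\subseteq F$, which is exactly the observation the paper makes.
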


\begin{proof}
Since $F$ has no $k$--faces containing $n$ and $R\subseteq F$, we have $F\cap\operatorname{Cone}(n,R)=R$. Thus, by excision, the pairs $(T,\operatorname{Cone}(n,R))$ and $(F,R)$ have the same relative homology in every dimension. But, since $\operatorname{Cone}(n,R)$ is contractible, $\tilde H_*(T)\cong H_*(F,R)$. Therefore $T\in\mathscr{T}_{n,k}(\operatorname{Cone}(n,\Delta))$ if and only if $(F,R)\in\mathscr{F}_{n-1,k}^{\operatorname{root}}(\Delta)$ by Lemmas \ref{general2/3} and \ref{moregeneral2/3}.
The bijection then follows from Lemma \ref{binomial}.  
\end{proof}

\begin{corollary}\label{maincor}
Set $T=F\cup\operatorname{Cone}(n,R)$, where $F\in\mathscr{C}_{n-1,k}$ and $R\in\mathscr{C}_{n-1,k-1}$. Then $\tilde H_*(T)\cong H_*(F,R)$, and $T\in\mathscr{T}_{n,k}$ if and only if $(F,R)\in\mathscr{F}_{n-1,k}^{\operatorname{root}}$. Moreover the binomial correspondence restricts to a bijection $\varphi:\mathscr{T}_{n,k}\xrightarrow{\sim}\mathscr{F}_{n-1,k}^{\operatorname{root}}$. 
\end{corollary}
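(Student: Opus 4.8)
The plan is to obtain this statement as the special case $\Delta=K_{n-1}^{n-1}$ of Theorem \ref{submain}, so that essentially nothing is left to do beyond matching up the notational conventions. Recall that, by the convention under which the argument $(\Delta)$ is dropped from $\mathscr{C}$, $\mathscr{T}$, and $\mathscr{F}^{\operatorname{root}}$ whenever $\Delta$ contains the complete skeleton whose vertex count and dimension match the subscripts of the symbol, the unadorned objects appearing in the corollary are by definition the $\Delta=K_{n-1}^{n-1}$ (equivalently, any $\Delta$ large enough) instances of their adorned counterparts.

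First I would record the identifications that make this specialization legitimate. The complex $K_{n-1}^{n-1}$ is the full simplex $2^{[n-1]}$ on $[n-1]$, and it contains both $K_{n-1}^{k}$ and $K_{n-1}^{k-1}$; hence $\mathscr{C}_{n-1,k}(K_{n-1}^{n-1})=\mathscr{C}_{n-1,k}$, $\mathscr{C}_{n-1,k-1}(K_{n-1}^{n-1})=\mathscr{C}_{n-1,k-1}$, and $\mathscr{F}_{n-1,k}^{\operatorname{root}}(K_{n-1}^{n-1})=\mathscr{F}_{n-1,k}^{\operatorname{root}}$. Likewise $\operatorname{Cone}(n,K_{n-1}^{n-1})=2^{[n]}=K_n^{n-1}$, which contains $K_n^k$, so $\mathscr{C}_{n,k}(\operatorname{Cone}(n,K_{n-1}^{n-1}))=\mathscr{C}_{n,k}$ and $\mathscr{T}_{n,k}(\operatorname{Cone}(n,K_{n-1}^{n-1}))=\mathscr{T}_{n,k}$; for the last of these I would note in passing that $\mathscr{T}_{n,k}(\Delta)$ depends on $\Delta$ only through $K_n^k\cap\Delta$, since both the membership condition $F\in\mathscr{C}_{n,k}(\Delta)$ and the tree condition $|F_k|=\rank\partial_k^{\Delta}$ are unaffected by replacing $\Delta$ with $K_n^k\cap\Delta$.

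With these identifications in hand, each of the three assertions of the corollary is literally the $\Delta=K_{n-1}^{n-1}$ instance of the corresponding assertion of Theorem \ref{submain}: the isomorphism $\tilde H_*(T)\cong H_*(F,R)$, the equivalence $T\in\mathscr{T}_{n,k}\iff(F,R)\in\mathscr{F}_{n-1,k}^{\operatorname{root}}$, and the claim that the binomial correspondence $\varphi(X)=(\operatorname{Proj}(n,X),\operatorname{Link}(n,X))$ of Lemma \ref{binomial} restricts to a bijection $\mathscr{T}_{n,k}\xrightarrow{\sim}\mathscr{F}_{n-1,k}^{\operatorname{root}}$. I do not foresee any genuine obstacle; the only point that calls for a moment's attention is the routine bookkeeping of the previous paragraph, in particular confirming that the unadorned symbols really do coincide with their $K_{n-1}^{n-1}$ counterparts.
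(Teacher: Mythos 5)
Your proposal is correct and follows essentially the same route as the paper: both deduce the corollary by specializing Theorem \ref{submain} to a complete $\Delta$ (the paper phrases it for any $\Delta\supseteq K_{n-1}^k$, observing that the $k$--truncation of $\operatorname{Cone}(n,\Delta)$ is then $K_n^k$, while you take $\Delta=K_{n-1}^{n-1}$ directly), and both reduce the matter to checking that the unadorned notation coincides with the adorned notation in this case. The bookkeeping you carry out is exactly the content of the paper's short proof, so nothing is missing.
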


\begin{proof}
This follows by recalling that $\mathscr{T}_{n,k}(\operatorname{Cone}(n,\Delta))=\mathscr{T}_{n,k}((\Delta\cap K_{n-1}^k)\cup\{\sigma\cup\{n\}:\sigma\in\Delta\cap K_{n-1}^{k-1}\})$. Thus if $\Delta\supseteq K_{n-1}^k$, then  $(\Delta\cap K_{n-1}^k)\cup\{\sigma\cup\{n\}:\sigma\in\Delta\cap K_{n-1}^{k-1}\}=K_n^k$. 
\end{proof}


\section{Determinantal measure}


For ease of reading, we will for this section abuse notation by identifying each $j$--complex of $K_n^{n-1}$ with it's set of $j$--dimensional faces. For any $X\in\mathscr{C}_{n,j}$, we also will let $\overline{X}$ denote the $j$--complex that's set of $j$--faces is the complement (with respect to ${[n]\choose{j+1}}$) of $X_j$. Define the submatrix $\widehat\partial$ of $\partial$ by deleting all rows of $\partial$ that correspond to elements of ${[n]\choose k}$ which contain the vertex $n$ (thus $\widehat\partial_0^{[n]}=\partial_0^{[n]}$). We have the following corollary of Lemma \ref{ABD}:

\begin{corollary}\label{ABDlight}
Suppose $T\in\mathscr{C}_{n,k}$ satisfies $|T_k|={{n-1}\choose k}$. Then
$\det\widehat\partial_{\bullet,T}\neq0\iff|\det\widehat\partial_{\bullet,T}|=|\tilde{H}_{k-1}(T)|\iff T\in\mathscr{T}_{n,k}$. \end{corollary}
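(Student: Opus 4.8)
The plan is to deduce Corollary~\ref{ABDlight} from Lemma~\ref{ABD} by specializing $\Delta$, $F$, $R$ to the situation at hand. First I would take $\Delta = K_n^{n-1}$ (so that $\mathscr{C}_{n,k}(\Delta) = \mathscr{C}_{n,k}$ and $\mathscr{T}_{n,k}(\Delta) = \mathscr{T}_{n,k}$), take $F = T$, and take $R = K_{n-1}^{k-2} \cup \{\text{the $k-1$--faces not containing }n\}$; more precisely, $R$ should be the $(k-1)$--complex whose set of $(k-1)$--faces is $\{\sigma \in \binom{[n]}{k} : n \notin \sigma\}$, i.e.\ $R_{k-1} = \binom{[n-1]}{k}$. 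With this choice, $\overline{R} = \Delta_{k-1} \setminus R_{k-1}$ is exactly the set of $(k-1)$--faces of $[n]$ \emph{containing} $n$, so the submatrix $\partial_{\overline{R}, F_k}$ is precisely $\widehat\partial_{\bullet, T}$ after one observes that deleting the rows indexed by $(k-1)$--faces not containing $n$ (the definition of $\widehat\partial$ reversed) leaves exactly the rows indexed by $\overline{R}$. I would spell out this identification of matrices carefully, since it is the one place where the notational conventions ($\widehat\partial$ deletes rows \emph{containing} $n$ for indices in $\binom{[n]}{k}$, wait --- it deletes the ones that \emph{do} contain $n$) need to be matched against $\overline{R}$; this bookkeeping is the main thing to get right.

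Next I would verify the hypothesis of Lemma~\ref{ABD}, namely $|F_k| = |\Delta_{k-1} \setminus R_{k-1}| = |\overline{R}|$. We are given $|T_k| = \binom{n-1}{k}$, and $|\overline{R}|$ is the number of $(k-1)$--faces of $[n]$ containing $n$, which is $\binom{n-1}{k-1} \cdot$\,--- no: the number of $k$--element subsets of $[n]$ containing $n$ is $\binom{n-1}{k-1}$. So this does not immediately match. I would instead reconsider and pick $R$ so that $\overline{R}$ has size $\binom{n-1}{k}$; but $\widehat\partial$ is explicitly defined as the matrix whose rows are the $(k-1)$--faces \emph{not} containing $n$, of which there are $\binom{n-1}{k}$. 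So the correct choice is $R_{k-1} = \{\sigma \in \binom{[n]}{k} : n \in \sigma\}$, giving $\overline{R} = \binom{[n-1]}{k}$ with $|\overline{R}| = \binom{n-1}{k} = |T_k|$, matching the hypothesis. Then $\partial_{\overline{R}, T_k} = \widehat\partial_{\bullet, T}$ directly from the definition of $\widehat\partial$.

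With the identification and the cardinality hypothesis in place, Lemma~\ref{ABD} gives the chain of equivalences $\det\partial_{\overline{R}, T_k} \neq 0 \iff |\det\partial_{\overline{R}, T_k}| = |H_{k-1}(T, R)| \iff (T, R) \in \mathscr{F}_{n,k}^{\operatorname{root}}(K_n^{n-1})$. To finish I would translate the two endpoints: the first is literally $\det\widehat\partial_{\bullet, T} \neq 0$ and $|\det\widehat\partial_{\bullet, T}| = |H_{k-1}(T, R)|$, so I need $H_{k-1}(T, R) \cong \tilde H_{k-1}(T)$. Since $R$ has full $(k-2)$--skeleton and its $(k-1)$--faces are exactly those of the cone point $n$ --- in fact $R = \operatorname{Cone}(n, K_{n-1}^{k-2})$ restricted appropriately, which is contractible --- homotopy invariance of relative homology (as invoked after the definition of relative homology) gives $H_*(T, R) \cong \tilde H_*(T)$. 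For the last endpoint, I would invoke Corollary~\ref{maincor} or Theorem~\ref{submain}: $(T, R) \in \mathscr{F}_{n,k}^{\operatorname{root}}$ corresponds under the binomial correspondence to $T = \operatorname{Proj} \cup \operatorname{Cone}(n, \operatorname{Link})$ being in $\mathscr{T}_{n,k}$, i.e.\ $T \in \mathscr{T}_{n,k}$. The main obstacle is purely the matching of the $\widehat\partial$ convention with $\overline{R}$ and confirming $R$ is contractible so that relative homology collapses to reduced homology; everything else is a direct citation of Lemma~\ref{ABD}.
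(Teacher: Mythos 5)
Your construction is, after the mid\-proof self\-correction, exactly the paper's: the root is $R=\operatorname{Cone}(n,K_{n-1}^{k-2})$, i.e.\ the $(k-1)$--complex with full $(k-2)$--skeleton whose $(k-1)$--faces are precisely the $k$--subsets containing $n$, so that $\overline{R}={[n-1]\choose k}$, $|\overline{R}|={{n-1}\choose k}=|T_k|$, and $\partial_{\overline{R},T_k}=\widehat\partial_{\bullet,T}$; Lemma \ref{ABD} then applies, and contractibility of the cone $R$ gives $H_{k-1}(T,R)\cong\tilde H_{k-1}(T)$, handling the middle equivalence just as the paper does.

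The one step that does not go through as written is your justification of the last equivalence. Corollary \ref{maincor} and Theorem \ref{submain} relate $T\in\mathscr{T}_{n,k}$ to the pair $(\operatorname{Proj}(n,T),\operatorname{Link}(n,T))\in\mathscr{F}_{n-1,k}^{\operatorname{root}}$ --- a pair of complexes on the vertex set $[n-1]$ extracted from $T$ itself --- whereas what you need is that the pair $(T,R)$ on $[n]$, with the \emph{fixed} root $R=\operatorname{Cone}(n,K_{n-1}^{k-2})$, lies in $\mathscr{F}_{n,k}^{\operatorname{root}}$ iff $T\in\mathscr{T}_{n,k}$. The binomial correspondence says nothing about this pair; the determinant it controls is $\det\partial_{\overline{E},F}$ with $E=\operatorname{Link}(n,T)$, $F=\operatorname{Proj}(n,T)$ (as in Corollary \ref{newdef}), not $\widehat\partial_{\bullet,T}$. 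The repair is immediate from ingredients you already have on the table: since the size condition $|T_k|=|T_{k-1}\setminus R_{k-1}|$ holds, Lemma \ref{moregeneral2/3} says $(T,R)\in\mathscr{F}_{n,k}^{\operatorname{root}}$ iff $\beta_{k-1}(T,R)=0$ (equivalently $\beta_k(T,R)=0$); contractibility of $R$ converts these to $\beta_{k-1}(T)=0$, $\beta_k(T)=0$; and Corollary \ref{2/3}, together with the hypothesis $|T_k|={{n-1}\choose k}$, says either vanishing is equivalent to $T\in\mathscr{T}_{n,k}$. With that substitution --- citing Corollary \ref{2/3} rather than Corollary \ref{maincor} --- your argument coincides with the paper's proof.
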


\begin{proof}
Let $R=\operatorname{Cone}\left(n,K_{n-1}^{k-2}\right)$ so that $|T_k|+|R_{k-1}|={n\choose k}$, and  $\partial_{\overline{R},T}=\widehat\partial_{\bullet,T}$. This now follows from Corollary \ref{2/3} and Lemma \ref{ABD}. 
\end{proof}

This last corollary was originally proven for the cases $k\geq1$ (\cite{Kalai}, Lemma 2) by Kalai, who combined this with the Cauchy--Binet formula and some deft linear algebra to show (\cite{Kalai}, Theorem 1) that
$$n^{{n-2}\choose k}=\det\widehat\partial {\widehat\partial}^t=\sum_{T\in\mathscr{T}_{n,k}}\det\widehat\partial_{\bullet,T}^2=\sum_{T\in\mathscr{T}_{n,k}}|\tilde{H}_{k-1}(T)|^2.$$
Understanding that $\widehat\partial_0^{[n]}=\partial_0^{[n]}$ and $\tilde H_{-1}(T)=0$, the case $k=0$ can also be seen to hold. This gives us a natural probability measure $\nu=\nu_{n,k}$ on $\mathscr{T}_{n,k}$. Namely, 
$$\nu_{n,k}(T):=\frac{\det\widehat\partial_{\bullet,T}^2}{\det\widehat\partial {\widehat\partial}^t}=\frac{|\tilde H_{k-1}(T)|^2}{n^{{n-2}\choose k}}.$$
This measure was originally formulated in greater generality by Lyons (\cite{Lyons}, \S12) and was further expanded upon in \cite{Lyons2}. This version of the measure has also been considered again recently by authors such as Kahle and Newman \cite{KN} and  M\'esz\'aros \cite{Meszaros}. 

Measures defined in the above manner are said to be \emph{determinantal} \cite{Lyons}, as are the random variables associated to them. The following lemma is a special case of Theorem 5.1 from \cite{Lyons}: 
\begin{lemma}\label{incl}
Let $R,S$ be finite sets and let $M$ be an $R\times S$ matrix of rank $|R|$. Let $\mu$ be the determinantal measure on $S$ which is defined by $\mu(T)=\frac{\det M_{\bullet,T}^2}{\det(MM^t)}$ for all $T\subseteq S$ of size $|R|$. Let $P:=M^t(MM^t)^{-1}M$ (this is the matrix of the projection onto the rowspace of $M$). Then, for any $B\subseteq S$, $$\mu(T:T\supseteq B)=\det P_{B,B}\quad\text{ and }\quad\mu(T:T\subseteq S\setminus B)=\det(\Id-P)_{B,B}.$$
\end{lemma}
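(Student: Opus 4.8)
The plan is to reduce the statement to a standard fact about determinantal processes coming from a projection matrix, and then verify that the two assertions are instances of that fact applied to $P$ and to $\Id - P$.

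First I would recall why $P = M^t(MM^t)^{-1}M$ makes sense: since $M$ has rank $|R|$ (full row rank), $MM^t$ is an invertible $R\times R$ matrix, so $P$ is a well-defined $S\times S$ matrix. A one-line computation shows $P^2 = P$ and $P^t = P$, so $P$ is the matrix of orthogonal projection onto the row space of $M$, a subspace of $\R^S$ of dimension $|R|$. In particular $\Id - P$ is also an orthogonal projection, onto the orthogonal complement, which has dimension $|S| - |R|$. The measure $\mu$, which assigns to each $|R|$-subset $T \subseteq S$ the weight $\det(M_{\bullet,T})^2 / \det(MM^t)$, is (by the Cauchy--Binet formula, which guarantees the weights sum to $1$) exactly the determinantal probability measure associated with the projection $P$; this is the setting of Theorem 5.1 in \cite{Lyons}.

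Next I would invoke the inclusion (and exclusion) probabilities for such a process. For a determinantal measure associated to a projection matrix $P$ of rank $r$, supported on $r$-subsets of $S$, the probability that a fixed set $B$ is contained in the random subset is $\det P_{B,B}$. Applying this directly gives $\mu(T : T \supseteq B) = \det P_{B,B}$, which is the first claim. For the second claim, one observes that the complement map $T \mapsto S \setminus T$ pushes $\mu$ forward to the determinantal measure associated with the complementary projection $\Id - P$ (which has rank $|S| - |R|$, the correct size for the complements). Under this correspondence, the event $\{T \subseteq S \setminus B\}$ becomes the event $\{S \setminus T \supseteq B\}$, i.e. the event that the complement contains $B$; applying the inclusion formula to $\Id - P$ then yields $\mu(T : T \subseteq S \setminus B) = \det(\Id - P)_{B,B}$.

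The main obstacle is simply that the statement is asserted to be a "special case of Theorem 5.1 from \cite{Lyons}," so the real work is bookkeeping: confirming that the hypotheses of that theorem (full row rank of $M$, the normalization via Cauchy--Binet, and the identification of $P$ as the correct kernel) are met in exactly the form stated here, and that the complementation argument for $\Id - P$ is legitimate — i.e. that the pushforward of a projection-determinantal process under complementation is again projection-determinantal with kernel $\Id - P$. Both of these are standard, but care is needed to make sure the indexing conventions (rows indexed by $R$, columns by $S$, subsets $T$ of size $|R|$) line up precisely with the cited theorem's conventions, so that the submatrices $P_{B,B}$ and $(\Id - P)_{B,B}$ in the conclusion are the intended principal submatrices indexed by $B \subseteq S$.
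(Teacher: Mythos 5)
Your proposal is correct and matches the paper's treatment: the paper offers no argument beyond citing Theorem 5.1 of \cite{Lyons}, and your reduction — identifying $P$ as the orthogonal projection onto the row space of $M$, checking the Cauchy--Binet normalization, and invoking the standard inclusion formula together with the fact that complementation replaces the kernel $P$ by $\Id-P$ — is exactly the bookkeeping that justifies that citation.
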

Determinantal random variables enjoy the negative associations property (\cite{Lyons}, Theorem 6.5), which can be stated for random variables on $\mathscr{C}_{n,k}$ as follows: A function $f:\mathscr{C}_{n,k}\to\R$ is called \emph{increasing} if $f(X)\leq f(X\cup Y)$ for every $X,Y\in\mathscr{C}_{n,k}$. A random variable $\mathcal{X}\in\mathscr{C}_{n,k}$ is said to have \emph{negative associations} if, for every pair of increasing functions $f_1,f_2$ and every $Y\in\mathscr{C}_{n,k}$, we have $$\E\big[f_1(\mathcal{X}\cap Y)f_2(\mathcal{X}\cap \overline{Y})\big]\leq\E\big[f_1(\mathcal{X}\cap Y)\big]\E\big[f_2(\mathcal{X}\cap \overline{Y})\big].$$ 
We will make use of negative associations when we go to prove our applications to random topology. 

As we see from Lemma \ref{incl}, for any $B\in\mathscr{C}_{n,k}$ and $A\in\mathscr{C}_{n,k-1}$, we have 
\begin{equation}\label{clusions}
\nu_{n,k}\left(T:T\supseteq B\right)=\det (P_{n,k})_{B,B}\quad\text{ and }\quad\nu_{n,k-1}\left(T:T\subseteq \overline{A}\right)=\det(\Id- P_{n,k-1})_{A,A}
\end{equation}
where $P_{n,k}:=\widehat\partial^t(\widehat\partial\widehat\partial^{t})^{-1}\widehat\partial$. Our use of $\widehat\partial$ is actually a choice of basis for the rowspace of $\partial$, and an arbitrary one at that. Let $A$ be any $k-1$--root of $K_n^{n-1}$---this implies that $|A_{k-1}|={{n-1}\choose k}$ (\cite{Kalai}, Lemma 1). Then, by applying a change of basis, we also have the equivalent definition(s) \begin{equation}\label{alt}
\nu_{n,k}(T):=\frac{\det\partial_{A,T}^2}{\det(\partial \partial^t)_{A,A}}, \end{equation}
all of which correspond to the same $P_{n,k}$.
M{\'e}sz{\'a}ros (\cite{Meszaros}, Lemma 14) determined that  $$P_{n,k}:=\frac{1}{n}\partial^{[n]t}_k\partial_k^{[n]}.$$
By Lemma \ref{Hodgey}, we also have $\Id-P_{n,k}=\frac{1}{n}\partial^{[n]}_{k+1}\partial^{[n]t}_{k+1}=:Q_{n,k}$. By Lemma \ref{ABD} and Cauchy--Binet, we therefore have the following lemma:
\begin{lemma}\label{bridge}
Suppose $B\in\mathscr{C}_{n,k}$ and $A\in\mathscr{C}_{n,k-1}$. Then \begin{align*}
\nu_{n,k-1}\left(T:T\subseteq\overline{A}\right)=\det(Q_{n,k-1})_{A,A}=n^{-|\overline{A}|}&\sum_{B:(B,\overline{A})\in\mathscr{T}_{n,k}^{\operatorname{root}}}\det\partial^2_{A,B}, \\
\text{and}\quad\nu_{n,k}\left(T:T\supseteq B\right)=\det(P_{n,k})_{B,B}=n^{-|B|}&\sum_{A:(B,\overline{A})\in\mathscr{T}_{n,k}^{\operatorname{root}}}\det\partial_{A,B}^2.
\end{align*} 
\end{lemma}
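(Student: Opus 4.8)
The plan is to derive both identities from the chain of equalities already assembled in the excerpt, treating the two lines symmetrically. For the first line, the leftmost equality $\nu_{n,k-1}(T:T\subseteq\overline A)=\det(Q_{n,k-1})_{A,A}$ is exactly the second half of (\ref{clusions}) together with the identity $\Id-P_{n,k-1}=Q_{n,k-1}$ coming from Lemma \ref{Hodgey} and M\'esz\'aros's formula $P_{n,k}=\tfrac1n\partial^{[n]t}_k\partial^{[n]}_k$. So the only substantive step is the rightmost equality, the combinatorial expansion of $\det(Q_{n,k-1})_{A,A}$.

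First I would unwind the definition $Q_{n,k-1}=\tfrac1n\partial^{[n]}_k\partial^{[n]t}_k$, so that $(Q_{n,k-1})_{A,A}=\tfrac1n(\partial_k^{[n]}\partial_k^{[n]t})_{A,A}=\tfrac1n\partial_{A,\bullet}\partial_{A,\bullet}^t$, where $\partial=\partial_k^{[n]}$ has rows indexed by ${[n]\choose k}$ and columns by ${[n]\choose k+1}$, and $\partial_{A,\bullet}$ is the $|A|\times{[n]\choose k+1}$ submatrix keeping only the rows in $A\subseteq{[n]\choose k}$. Pulling the scalar out gives $\det(Q_{n,k-1})_{A,A}=n^{-|A|}\det(\partial_{A,\bullet}\partial_{A,\bullet}^t)$, and since $A$ is a $(k-1)$-root of $K_n^{n-1}$ we have $|A|={{n-1}\choose k}=|\overline A|$ (so the exponent $-|A|$ can equally be written $-|\overline A|$, matching the statement). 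Then Cauchy--Binet applied to the product $\partial_{A,\bullet}\cdot\partial_{A,\bullet}^t$ of an $|A|\times{[n]\choose k+1}$ matrix with its transpose yields
\[
\det(\partial_{A,\bullet}\partial_{A,\bullet}^t)=\sum_{B\subseteq{[n]\choose k+1},\,|B|=|A|}\det(\partial_{A,B})^2.
\]
It remains to identify which index sets $B$ contribute a nonzero term: by Lemma \ref{ABD} applied with $F=B$, $R=\overline A$ (noting $|B|=|A|=|\overline A{}^{\,c}|$ in the relevant degree, i.e. $|F_k|=|\Delta_{k-1}\setminus R_{k-1}|$), $\det\partial_{A,B}=\det\partial_{\overline R,F_k}\neq0$ precisely when $(B,\overline A)=(F,R)\in\mathscr{F}_{n,k}^{\operatorname{root}}=\mathscr{T}_{n,k}^{\operatorname{root}}$ (the latter equality being the ``rooted forest'' notation specialized to $\Delta=K_n^{n-1}$, where every rooted forest with $|B|={{n-1}\choose k}$ faces is in fact rooted spanning). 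Discarding the vanishing terms gives exactly $n^{-|\overline A|}\sum_{B:(B,\overline A)\in\mathscr{T}_{n,k}^{\operatorname{root}}}\det\partial_{A,B}^2$, which is the first displayed identity.

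The second line is dual and I would run the identical argument: $\nu_{n,k}(T:T\supseteq B)=\det(P_{n,k})_{B,B}$ by the first half of (\ref{clusions}), then $\det(P_{n,k})_{B,B}=n^{-|B|}\det(\partial^{[n]}_k\partial^{[n]t}_k$ restricted$)$ — more precisely $(P_{n,k})_{B,B}=\tfrac1n(\partial_k^{[n]t}\partial_k^{[n]})_{B,B}=\tfrac1n\partial_{\bullet,B}^t\partial_{\bullet,B}$, so $\det(P_{n,k})_{B,B}=n^{-|B|}\det(\partial_{\bullet,B}^t\partial_{\bullet,B})$ — followed by Cauchy--Binet (now on $\partial_{\bullet,B}^t\cdot\partial_{\bullet,B}$, an $|B|\times{[n]\choose k}$ times ${[n]\choose k}\times|B|$ product) to get $\sum_{A,|A|=|B|}\det(\partial_{A,B})^2$, and again Lemma \ref{ABD} to restrict the sum to those $A$ with $(B,\overline A)\in\mathscr{T}_{n,k}^{\operatorname{root}}$. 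I expect the main (and really the only) obstacle to be bookkeeping: making sure the row/column index conventions line up so that $\det\partial_{A,B}$ genuinely equals the determinant $\det\partial_{\overline R,F_k}$ appearing in Lemma \ref{ABD} (in particular that $R=\overline A$ forces $\overline R = A$ as index sets, and that $|B|={{n-1}\choose k}$ is the correct cardinality making the hypothesis $|F_k|=|\Delta_{k-1}\setminus R_{k-1}|$ hold), and confirming that on $\Delta=K_n^{n-1}$ the sets $\mathscr{F}_{n,k}^{\operatorname{root}}$ and $\mathscr{T}_{n,k}^{\operatorname{root}}$ coincide at this cardinality so the notation in the statement is justified. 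None of these is deep; the content is entirely ``Cauchy--Binet plus Lemma \ref{ABD}'', exactly as the sentence preceding the lemma advertises.
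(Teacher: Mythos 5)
Your route is the same as the paper's: the paper's entire proof is the sentence preceding the lemma (expression (\ref{clusions}) together with M\'esz\'aros's formula and Lemma \ref{Hodgey} give the two projection minors, then Cauchy--Binet expands each minor and Lemma \ref{ABD} identifies the nonvanishing terms), and that is exactly what you spell out; your treatment of the second display is correct as written.

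One step in your first display is wrong, though it concerns reconciling your (correct) computation with what is evidently a slip in the statement rather than the mathematics itself. You assert that ``$A$ is a $(k-1)$-root of $K_n^{n-1}$'' and hence $|A|={{n-1}\choose k}=|\overline A|$. The lemma makes no such hypothesis---$A\in\mathscr{C}_{n,k-1}$ is arbitrary, and in the paper's own application (the proof of Lemma \ref{incr}) the index set is $\overline E$ for an arbitrary $E$---and the asserted equality is false anyway, since $|\overline A|={n\choose k}-|A|$ and ${{n-1}\choose k}={{n-1}\choose{k-1}}$ only when $n=2k$. Your Cauchy--Binet step correctly produces the prefactor $n^{-|A|}$, the size of the index set of the minor, and that is also what the paper uses downstream (compare $(n-1)^{|\overline E|}\nu_{n-1,k-1}(T:T\subseteq E)=\det(\partial\partial^t)_{\overline E,\overline E}$ in Lemma \ref{incr}, which is the lemma applied with index set $\overline E$); so the exponent $-|\overline A|$ in the statement should be read as a typo for $-|A|$, not as something to be derived. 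For the same reason you should not restrict the nonvanishing terms to rooted \emph{spanning} trees: that is only legitimate when $|A|={{n-1}\choose k}$ (respectively $|B|={{n-1}\choose k}$ in the second display); for general $A$ and $B$, Lemma \ref{ABD} characterizes the nonzero minors as the rooted forests $(B,\overline A)\in\mathscr{F}_{n,k}^{\operatorname{root}}$, which is how the otherwise undefined symbol $\mathscr{T}_{n,k}^{\operatorname{root}}$ has to be interpreted for the identity, and its later uses, to hold.
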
 

\begin{corollary}\label{split}
For any $T\in\mathscr{T}_{n,k}$, $T'\in\mathscr{T}_{n,k-1}$ we have 
$|H_{k-1}(T,T')|=|\tilde{H}_{k-1}(T)||\tilde{H}_{k-2}(T')|$. Moreover 
$$\left\{(F,E)\in\mathscr{F}_{n,k}^{\operatorname{root}}:|F|={{n-1}\choose k}\right\}=\mathscr{T}_{n,k}\times\mathscr{T}_{n,k-1}.$$
\end{corollary}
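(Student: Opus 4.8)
The plan is to establish the two assertions separately, in each case combining the long exact sequence of the relevant pair with the characterizations of Section 2: Corollary \ref{2/3} for trees and Lemma \ref{moregeneral2/3} for rooted forests. I will use throughout that any $F\in\mathscr{C}_{n,k}$ contains the full $(k-1)$-skeleton of $K_n^{n-1}$, so its chain complex agrees with that of $K_n^{n-1}$ in all degrees $\le k-1$ and hence $\tilde H_j(F)=0$ for $j\le k-2$; and that any $E\in\mathscr{C}_{n,k-1}$ has no $k$-faces, so $\tilde H_k(E)=0$ and $\tilde H_{k-1}(E)=\ker\partial_{k-1}^E$ is free of rank $\beta_{k-1}(E)$.

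For the order formula, observe that $T'\subseteq K_n^{k-1}\subseteq T$, so $H_*(T,T')$ is defined, and the long exact sequence of the pair contains
\[\tilde H_{k-1}(T')\longrightarrow\tilde H_{k-1}(T)\longrightarrow H_{k-1}(T,T')\longrightarrow\tilde H_{k-2}(T')\longrightarrow\tilde H_{k-2}(T).\]
Since $T'$ is a $(k-1)$-forest, the first term is free of rank $\beta_{k-1}(T')=0$ and hence trivial, while $\tilde H_{k-2}(T)=0$ by the remark above. The sequence therefore collapses to a short exact sequence $0\to\tilde H_{k-1}(T)\to H_{k-1}(T,T')\to\tilde H_{k-2}(T')\to 0$ of finitely generated abelian groups whose outer terms are the finite torsion groups attached to the trees $T$ and $T'$; in particular $H_{k-1}(T,T')$ is finite and $|H_{k-1}(T,T')|=|\tilde H_{k-1}(T)|\,|\tilde H_{k-2}(T')|$.

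For the set equality I treat the two inclusions in turn. Suppose first $(F,E)\in\mathscr{F}_{n,k}^{\operatorname{root}}$ with $|F_k|=\binom{n-1}{k}$; by Lemma \ref{moregeneral2/3} all three of its conditions hold. From $\beta_k(F,E)=0$ and the long exact sequence of $(F,E)$, in which $\tilde H_k(E)=0$ forces $\tilde H_k(F)$ to inject into $H_k(F,E)$, we obtain $\beta_k(F)=0$, so Corollary \ref{2/3} gives $F\in\mathscr{T}_{n,k}$. The first condition reads $|F_k|=|F_{k-1}\setminus E_{k-1}|=\binom{n}{k}-|E_{k-1}|$, forcing $|E_{k-1}|=\binom{n-1}{k-1}$; and since $\tilde H_{k-2}(F)=0$, exactness exhibits $\tilde H_{k-2}(E)$ as a quotient of $H_{k-1}(F,E)$, which is finite because $\beta_{k-1}(F,E)=0$, so $\beta_{k-2}(E)=0$ and Corollary \ref{2/3} in dimension $k-1$ gives $E\in\mathscr{T}_{n,k-1}$. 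Conversely, if $F\in\mathscr{T}_{n,k}$ and $E\in\mathscr{T}_{n,k-1}$, then Corollary \ref{2/3} supplies $|F_k|=\binom{n-1}{k}$ and $|E_{k-1}|=\binom{n-1}{k-1}$, hence $|F_k|=|F_{k-1}\setminus E_{k-1}|$; moreover $\tilde H_k(F)=0$ (as $F$ is a forest) and $\tilde H_{k-1}(E)=0$ (as $E$ is a forest), so the long exact sequence sandwiches $H_k(F,E)$ between two trivial groups, giving $\beta_k(F,E)=0$. Two of the three conditions of Lemma \ref{moregeneral2/3} thus hold, so $(F,E)\in\mathscr{F}_{n,k}^{\operatorname{root}}$, which closes the equality.

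I anticipate no genuine obstacle: the whole argument is an unwinding of the long exact sequence of a pair, together with Pascal's identity $\binom{n}{k}=\binom{n-1}{k}+\binom{n-1}{k-1}$. What requires care is the bookkeeping---tracking homological degrees, invoking each of these statements in the correct dimension, and verifying that the outer terms of each exact sequence vanish because the complex in question contains the appropriate full skeleton.
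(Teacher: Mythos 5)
Your argument is correct, but it takes a genuinely different route from the paper. You work entirely homologically: the long exact sequence of the pair $(T,T')$, together with the vanishing of $\tilde H_{k-1}(T')$ and $\tilde H_{k-2}(T)$, collapses to the short exact sequence $0\to\tilde H_{k-1}(T)\to H_{k-1}(T,T')\to\tilde H_{k-2}(T')\to0$, which gives the order formula (and in fact a stronger structural statement than the paper records), and the set identity then falls out of Corollary \ref{2/3} and Lemma \ref{moregeneral2/3} applied in the right dimensions, exactly as you do. The paper instead derives the corollary from the determinantal machinery it has just set up: taking $A=\overline{T'}$ as the row basis in the alternative formula (\ref{alt}), it uses Lemma \ref{incl} to write $\det(\partial\partial^t)_{A,A}=n^{\binom{n-1}{k}}\nu_{n,k-1}(T')$, so that $\det\partial_{A,T}^2=n^{\binom{n-1}{k}}\nu_{n,k}(T)\nu_{n,k-1}(T')$, and then Lemma \ref{ABD} converts this determinant identity into $|H_{k-1}(T,T')|=|\tilde H_{k-1}(T)||\tilde H_{k-2}(T')|$ and, via the finiteness criterion, into the set equality. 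The paper's route has the advantage of exhibiting the corollary as an identity of measure weights that is reused in the subsequent corollaries (it is really a statement about the normalization constants $n^{\binom{n-2}{k}+\binom{n-2}{k-1}}=n^{\binom{n-1}{k}}$), whereas yours is more elementary and self-contained, needing only the Section 2 material and no appeal to Kalai's enumeration, Cauchy--Binet, or the measure $\nu_{n,k}$ at all; it also makes transparent why the product formula holds (a split-free but exact extension of finite groups). Both proofs are complete; only the bookkeeping you flagged (degrees and skeleta) needs the care you already exercised.
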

\begin{proof}
Let $A:=\overline{T'}$ so that, as in expression (\ref{alt}), $|A_{k-1}|={{n-1}\choose k}$. Then by Lemma \ref{incl}, we have 
$$\nu_{n,k}(T)=\frac{\det\partial_{A,T}^2}{\det(\partial\partial^t)_{A,A}}=\frac{\det\partial_{A,T}^2}{n^{{n-1}\choose k}\nu_{n,k-1}\left(\left\{S:S\subseteq \overline{A}\right\}\right)}=\frac{\det\partial_{A,T}^2}{n^{{n-1}\choose k}\nu_{n,k-1}(T')}.$$
Thus by Lemma \ref{ABD}, 
$$\frac{|H_{k-1}(T,T')|^2}{n^{{n-1}\choose k}}=\frac{\det\partial_{A,T}^2}{n^{{n-1}\choose k}}=\nu_{n,k}(T)\nu_{n,k-1}(T')=\frac{|\tilde{H}_{k-1}(T)|^2|\tilde{H}_{k-2}(T')|^2}{n^{{{n-2}\choose k}+{{n-2}\choose {k-1}}}}.$$
The set identity in the statement of the corollary now follows from Lemma \ref{ABD}, as the left hand side of this is finite if and only if the right hand side is finite, and $T,T'$ are also the correct sizes for the statement to hold---i.e., ${{n-1}\choose k}+{{n-1}\choose{k-1}}={n\choose k}$. 
\end{proof}

\begin{corollary}\label{newdef}
Suppose $T\in\mathscr{T}_{n,k}$, and let $F=\operatorname{Proj}(n,T)$ and $E=\operatorname{Link}(n,T)$. Then 
$$\nu_{n,k}(F\cup\operatorname{Cone}(n,E))=\frac{\det\partial_{\overline{E},F}^2}{n^{{{n-2}\choose k}}}=\frac{|H_{k-1}(F,E)|^2}{n^{{{n-2}\choose k}}}.$$ 
\end{corollary}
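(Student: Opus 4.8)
The key observation is that the left-hand side is not really a new quantity at all. Since $F=\operatorname{Proj}(n,T)$ and $E=\operatorname{Link}(n,T)$, Lemma~\ref{binomial} (applied with $\Delta=K_{n-1}^{n-1}$, so that $\mathscr{C}_{n-1,k}(\Delta)=\mathscr{C}_{n-1,k}$ and $\mathscr{C}_{n-1,k-1}(\Delta)=\mathscr{C}_{n-1,k-1}$) tells us that $\varphi^{-1}$ is a two-sided inverse of $\varphi$, hence $F\cup\operatorname{Cone}(n,E)=\varphi^{-1}(\varphi(T))=T$. So $\nu_{n,k}(F\cup\operatorname{Cone}(n,E))=\nu_{n,k}(T)=|\tilde H_{k-1}(T)|^2/n^{\binom{n-2}{k}}$ directly from the definition of $\nu_{n,k}$, and the entire content of the corollary is to recognize $|\tilde H_{k-1}(T)|$ inside the pair $(F,E)$ in two ways.

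First I would invoke Corollary~\ref{maincor}: the hypothesis $T\in\mathscr{T}_{n,k}$ gives both $(F,E)\in\mathscr{F}_{n-1,k}^{\operatorname{root}}$ and $\tilde H_*(T)\cong H_*(F,E)$, so in particular $|\tilde H_{k-1}(T)|=|H_{k-1}(F,E)|$, which yields the second displayed equality. Second, I would feed $(F,E)$ into Lemma~\ref{ABD} with $n-1$ in place of $n$ and $\Delta=K_{n-1}^{n-1}$: its hypothesis $|F_k|=|\Delta_{k-1}\setminus E_{k-1}|=|\overline E|$ is exactly the size condition built into $(F,E)\in\mathscr{F}_{n-1,k}^{\operatorname{root}}$ through Lemma~\ref{moregeneral2/3} (using $F_{k-1}=\Delta_{k-1}=\binom{[n-1]}{k}$), and here $\overline E$ is, consistently with the $\overline R$ of Lemma~\ref{ABD}, the complement of $E_{k-1}$ inside $\binom{[n-1]}{k}$, so that $\partial_{\overline E,F}$ is the relevant square submatrix of the boundary operator. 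Since $(F,E)\in\mathscr{F}_{n-1,k}^{\operatorname{root}}$, Lemma~\ref{ABD} then gives $|\det\partial_{\overline E,F}|=|H_{k-1}(F,E)|$, hence $\det\partial_{\overline E,F}^2=|H_{k-1}(F,E)|^2$. Chaining these identities and dividing by $n^{\binom{n-2}{k}}$ produces all three expressions in the statement.

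I do not anticipate a genuine obstacle: the proof is an assembly of Lemma~\ref{binomial}, Corollary~\ref{maincor}, and Lemma~\ref{ABD}. The only points that need care are bookkeeping ones — seeing that $F\cup\operatorname{Cone}(n,E)$ literally equals $T$, so the left side is just $\nu_{n,k}(T)$, and keeping the vertex count straight, since $(F,E)$ lives over $[n-1]$ and so Lemma~\ref{ABD} and Corollary~\ref{maincor} must be invoked with $n$ replaced by $n-1$. (Alternatively, one could bypass the relative-homology lemma and argue directly that $\det\widehat\partial_{\bullet,T}=\pm\det\partial_{\overline E,F}$ by a cofactor expansion along the columns of $\widehat\partial_{\bullet,T}$ indexed by the faces $e\cup\{n\}$ with $e\in E_{k-1}$, and then apply Corollary~\ref{ABDlight}; but the route through Lemma~\ref{ABD} is shorter and also directly produces the middle expression $|H_{k-1}(F,E)|^2$.)
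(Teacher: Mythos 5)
Your proof is correct and follows exactly the route the paper takes: the paper's own (terse) proof cites Corollary \ref{maincor}, the definition $\nu_{n,k}(T)=|\tilde H_{k-1}(T)|^2/n^{\binom{n-2}{k}}$, and Lemma \ref{ABD}, which is precisely the chain of identities you assemble. Your additional bookkeeping (that $F\cup\operatorname{Cone}(n,E)=T$ via Lemma \ref{binomial}, and that the size hypothesis of Lemma \ref{ABD} is supplied by the rooted--forest condition with $n$ replaced by $n-1$) simply makes explicit what the paper leaves implicit.
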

%
\begin{proof}
This follows from Corollary \ref{maincor}, the original definition $\nu_{n,k}(T)=\frac{|\tilde H_{k-1}(T)|^2}{n^{{n-2}\choose k}}$, and Lemma \ref{ABD}. 
\end{proof}

\begin{corollary}
Suppose $T\in\mathscr{T}_{n,k}$ is such that $T':=\operatorname{Proj}(n,T)\in\mathscr{T}_{n-1,k}$ and $T'':=\operatorname{Link}(n,T)\in\mathscr{T}_{n-1,k-1}$. Then 
$\nu_{n,k}(T)=\nu_{n-1,k}(T')\nu_{n-1,k-1}(T'')(1-1/n)^{{n-2}\choose k}$. 
\end{corollary}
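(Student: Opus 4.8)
The plan is to combine the preceding corollaries so that everything reduces to a computation of determinants. The key observation is that $T', T'', T$ being trees of the appropriate dimensions is exactly the situation covered by Corollary \ref{newdef} together with the set identity in Corollary \ref{split}.

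\textbf{Step 1.} First I would invoke Corollary \ref{newdef} to write
$\nu_{n,k}(T) = \dfrac{|H_{k-1}(F,E)|^2}{n^{\binom{n-2}{k}}}$, where $F = \operatorname{Proj}(n,T) = T'$ and $E = \operatorname{Link}(n,T) = T''$. So it suffices to compute $|H_{k-1}(T', T'')|$ and to identify the exponent. Since $T' \in \mathscr{T}_{n-1,k}$ and $T'' \in \mathscr{T}_{n-1,k-1}$ (both being trees of $K_{n-1}^k$ and $K_{n-1}^{k-1}$ respectively, with $\Delta = K_{n-1}^{n-2}$), I would apply Corollary \ref{split} — but with $n$ replaced by $n-1$ — to get $|H_{k-1}(T', T'')| = |\tilde{H}_{k-1}(T')|\,|\tilde{H}_{k-2}(T'')|$. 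One has to check that the hypotheses of Corollary \ref{split} hold at level $n-1$: we need $(T', T'')$ to define a relative forest with $|T'_k| = \binom{n-2}{k}$, which is immediate since $T' \in \mathscr{T}_{n-1,k}$, and the $k-1$--skeleton / sizing conditions are exactly those guaranteeing $T'' = \overline{A}$ for a $(k-1)$--root $A$ of $K_{n-1}^{n-2}$.

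\textbf{Step 2.} Now substitute the factorization into the expression from Step 1:
\[
\nu_{n,k}(T) = \frac{|\tilde{H}_{k-1}(T')|^2 \, |\tilde{H}_{k-2}(T'')|^2}{n^{\binom{n-2}{k}}}.
\]
On the other hand, the definitions of the measures at level $n-1$ give $\nu_{n-1,k}(T') = |\tilde{H}_{k-1}(T')|^2 / (n-1)^{\binom{n-3}{k}}$ and $\nu_{n-1,k-1}(T'') = |\tilde{H}_{k-2}(T'')|^2 / (n-1)^{\binom{n-3}{k-1}}$. Multiplying, $\nu_{n-1,k}(T')\nu_{n-1,k-1}(T'') = |\tilde{H}_{k-1}(T')|^2|\tilde{H}_{k-2}(T'')|^2 / (n-1)^{\binom{n-3}{k} + \binom{n-3}{k-1}} = |\tilde{H}_{k-1}(T')|^2|\tilde{H}_{k-2}(T'')|^2 / (n-1)^{\binom{n-2}{k}}$, using Pascal's rule. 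Dividing the two displays, the homology factors cancel and we are left with
\[
\frac{\nu_{n,k}(T)}{\nu_{n-1,k}(T')\nu_{n-1,k-1}(T'')} = \frac{(n-1)^{\binom{n-2}{k}}}{n^{\binom{n-2}{k}}} = \left(1 - \tfrac{1}{n}\right)^{\binom{n-2}{k}},
\]
which is exactly the claim. So the whole proof is a one-line assembly once the two ingredients are in place.

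\textbf{Main obstacle.} I don't anticipate any real difficulty; the only thing requiring care is verifying that Corollaries \ref{newdef} and \ref{split} apply verbatim with $n \rightsquigarrow n-1$ (i.e. that the relevant complex is $K_{n-1}^{n-2} \supseteq K_{n-1}^{k}$, so the $(\Delta)$--free notation is legitimate) and that the binomial identity lines up the exponents correctly — this is where a sign or an off-by-one in the Pascal identity would be the likely source of error. The proof I would write is therefore: "By Corollary \ref{newdef}, $\nu_{n,k}(T) = |H_{k-1}(T',T'')|^2 / n^{\binom{n-2}{k}}$. By Corollary \ref{split} applied at $n-1$, $|H_{k-1}(T',T'')| = |\tilde H_{k-1}(T')||\tilde H_{k-2}(T'')|$. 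Comparing with the definitions of $\nu_{n-1,k}(T')$ and $\nu_{n-1,k-1}(T'')$ and using $\binom{n-3}{k} + \binom{n-3}{k-1} = \binom{n-2}{k}$ yields the stated identity."
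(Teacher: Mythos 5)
Your proposal is correct and matches the paper's own argument: the paper likewise combines Corollary \ref{newdef} with Corollary \ref{split} applied at level $n-1$ (via Lemma \ref{ABD}) and then rewrites the homology factors using the definitions of $\nu_{n-1,k}$, $\nu_{n-1,k-1}$ and Pascal's rule to produce the factor $(1-1/n)^{\binom{n-2}{k}}$. No gaps.
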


\begin{proof}
By the previous corollary, Lemma \ref{ABD}, and Corollary \ref{split},  $$\nu_{n,k}(T)=\frac{|H_{k-1}(T',T'')|^2}{n^{{n-2}\choose k}}=\frac{|\tilde H_{k-1}(T')|^2}{(n-1)^{{n-3}\choose k}}\frac{|\tilde H(T'')|^2}{(n-1)^{{n-3}\choose{k-1}}}\frac{(n-1)^{{{n-3}\choose k}+{{n-3}\choose {k-1}}}}{n^{{n-2}\choose k}}.$$ 
\end{proof}

\begin{lemma}\label{incr} 
For all $F\in\mathscr{C}_{n-1,k}$ and $E\in\mathscr{C}_{n-1,k-1}$, we have 
\begin{align*} 
\nu_{n,k}(T:\operatorname{Proj}(n,T)=F)=\nu_{n-1,k}(T':T'\supseteq F)(1-1/n)^{|F|}n^{|F|-{{n-2}\choose k}}
\end{align*}
and 
\begin{align*}
\nu_{n,k}(T:\operatorname{Link}(n,T)=E)=\nu_{n-1,k-1}(T'':T''\subseteq E)(1-1/n)^{|\overline{E}|}n^{|\overline{E}|-{{n-2}\choose k}}.
\end{align*}
\end{lemma}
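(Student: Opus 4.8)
The plan is to prove the two identities by the same template, using the binomial correspondence from Lemma \ref{binomial} together with the inclusion probabilities of Lemma \ref{incl}/Lemma \ref{bridge}. I will write out the argument for the first identity; the second is entirely analogous with the roles of $P$ and $Q$ swapped. First I would fix $F\in\mathscr{C}_{n-1,k}$ and enumerate the hypertrees $T\in\mathscr{T}_{n,k}$ with $\operatorname{Proj}(n,T)=F$. By Corollary \ref{maincor} and Lemma \ref{binomial}, such $T$ are exactly the complexes $F\cup\operatorname{Cone}(n,E)$ where $E$ ranges over those $k-1$--complexes in $\mathscr{C}_{n-1,k-1}$ with $(F,E)\in\mathscr{F}_{n-1,k}^{\operatorname{root}}$; and by Corollary \ref{split} this forces $E$ to be a $(k-1)$--tree of the $(n-1)$--simplex and (for the sizes to work) $F\supseteq$ the minimum, with $|F|=\binom{n-2}{k}$ only in the degenerate case — more precisely the size constraint is $|F|+|\overline{E}_{k-1}|=\binom{n-1}{k}$ by the root condition, so $|\overline{E}_{k-1}|=\binom{n-1}{k}-|F|$.

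Next I would sum the atom probabilities. Using Corollary \ref{newdef}, $\nu_{n,k}(F\cup\operatorname{Cone}(n,E))=\det\partial_{\overline{E},F}^2/n^{\binom{n-2}{k}}$, so
\begin{align*}
\nu_{n,k}(T:\operatorname{Proj}(n,T)=F)
&=\frac{1}{n^{\binom{n-2}{k}}}\sum_{E:(F,E)\in\mathscr{F}_{n-1,k}^{\operatorname{root}}}\det\partial_{\overline{E},F}^2 .
\end{align*}
The inner sum is now recognized via Lemma \ref{bridge} (the second displayed identity there, applied with $n$ replaced by $n-1$, with $B=F$ and $A=\overline{E}$): it equals $(n-1)^{|F|}\det(P_{n-1,k})_{F,F}=(n-1)^{|F|}\nu_{n-1,k}(T':T'\supseteq F)$. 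Substituting and collecting the powers of $n$ and $n-1$ — namely $n^{-\binom{n-2}{k}}(n-1)^{|F|}=\nu_{n-1,k}(T':T'\supseteq F)\cdot(1-1/n)^{|F|}n^{|F|-\binom{n-2}{k}}$ after writing $(n-1)^{|F|}=n^{|F|}(1-1/n)^{|F|}$ — yields exactly the claimed formula.

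For the second identity I would run the mirror image: the $T$ with $\operatorname{Link}(n,T)=E$ are $F\cup\operatorname{Cone}(n,E)$ with $(F,E)\in\mathscr{F}_{n-1,k}^{\operatorname{root}}$, $F$ ranging; Corollary \ref{newdef} again gives the atom weight as $\det\partial_{\overline{E},F}^2/n^{\binom{n-2}{k}}$, and now the first displayed identity of Lemma \ref{bridge} (with $n\mapsto n-1$, $A=\overline{E}$) evaluates $\sum_{F:(F,E)\in\mathscr{F}_{n-1,k}^{\operatorname{root}}}\det\partial_{\overline{E},F}^2=(n-1)^{|\overline{E}|}\det(Q_{n-1,k-1})_{\overline{E},\overline{E}}=(n-1)^{|\overline{E}|}\nu_{n-1,k-1}(T'':T''\subseteq E)$; the same bookkeeping of powers finishes it. The one genuinely delicate point — and the step I expect to be the main obstacle — is getting the index matching in Lemma \ref{bridge} exactly right: one must check that "$(B,\overline{A})\in\mathscr{T}_{n,k}^{\operatorname{root}}$" in that lemma translates, under the substitution $n\mapsto n-1$ and the identifications $B=F$, $A=\overline{E}$, precisely to the condition "$(F,E)\in\mathscr{F}_{n-1,k}^{\operatorname{root}}$" that indexes the trees with the prescribed projection (resp. link), including that the size hypothesis $|F_k|=|\Delta_{k-1}\setminus R_{k-1}|$ of Lemma \ref{ABD} is automatically met for the relevant $F$ and $E$. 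Everything else is Cauchy--Binet already packaged in Lemma \ref{bridge} plus elementary manipulation of the exponents.
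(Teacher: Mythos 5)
Your proof is correct and is essentially the paper's own argument: the paper compresses exactly these steps---Corollary \ref{newdef} for the atom weights, the indexing of $\{T:\operatorname{Proj}(n,T)=F\}$ (resp.\ of $\{T:\operatorname{Link}(n,T)=E\}$) by rooted forests via Corollary \ref{maincor}, Lemma \ref{bridge} with Cauchy--Binet, and finally writing $(n-1)^{|F|}=n^{|F|}(1-1/n)^{|F|}$---into a single sentence. The only slip is the non-load-bearing aside: Corollary \ref{split} does not force $E$ to be a $(k-1)$--tree for general $F$, and the root-condition size constraint is $|F|=|\overline{E}_{k-1}|$ (equivalently $|F|+|E_{k-1}|=\binom{n-1}{k}$) rather than $|F|+|\overline{E}_{k-1}|=\binom{n-1}{k}$; your subsequent bookkeeping uses the correct sizes, so nothing downstream is affected.
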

\begin{proof}
By Lemma \ref{bridge}, Corollary \ref{newdef}, and the Cauchy--Binet formula, we have  $$(n-1)^{|F|}\nu_{n-1,k}\left(T:T\supseteq F\right)=\det(\partial^t\partial)_{F,F}=n^{{n-2}\choose k}\nu_{n,k}(T:\operatorname{Proj}(n,T)=F)$$
and 
$$(n-1)^{|\overline{E}|}\nu_{n-1,k-1}\left(T:T\subseteq E\right)=\det(\partial\partial^t)_{\overline{E},\overline{E}}=n^{{n-2}\choose k}\nu_{n,k}(T:\operatorname{Link}(n,T)=E).$$ 
\end{proof}

\begin{corollary}
Using the notation from Theorem \ref{main}, there are couplings $\pi_{n,k}$ and $\lambda_{n,k}$ of $\mathcal{T}_{n,k}$, $\mathcal{T}_{n-1,k}$, $\mathcal{Y}_{n-1,k}$ and  $\mathcal{T}_{n,k}$, $\mathcal{T}_{n-1,k-1}$, $\mathcal{Y}_{n-1,k-1}$ respectively such that, marginally, $\mathcal{T}_{n-1,k},\mathcal{T}_{n-1,k-1}$ are independent of $\mathcal{Y}_{n-1,k},\mathcal{Y}_{n-1,k-1}$ respectively, and 
$$\pi_{n,k}\left((T,T',Y'):\operatorname{Proj}(n,T)=T'\setminus Y'\right)=1=\lambda_{n,k}\left((T,T'',Y''):\operatorname{Link}(n,T)=T''\cup Y''\right).$$
Namely, 
$$\pi_{n,k}(T,T',Y'):=\mu_{n-1,k}(Y')\nu_{n-1,k}(T')\frac{\nu_{n,k}(T)\one\{\operatorname{Proj}(n,T)=T'\setminus Y'\}}{\nu_{n,k}(S:\operatorname{Proj}(n,S)=T'\setminus Y')}$$ and 
$$\lambda_{n,k}(T,T'',Y''):=\mu_{n-1,k-1}(Y'')\nu_{n-1,k-1}(T'')\frac{\nu_{n,k}(T)\one\{\operatorname{Link}(n,T)=T''\cup Y''\}}{\nu_{n,k}(S:\operatorname{Link}(n,S)=T''\cup Y'')}.$$
\end{corollary}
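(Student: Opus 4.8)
The plan is to verify directly that $\pi_{n,k}$ and $\lambda_{n,k}$ as written are genuine probability measures on the appropriate product spaces, and that they have the claimed marginals and support properties; the two cases are entirely symmetric, so I would present $\pi_{n,k}$ in full and remark that $\lambda_{n,k}$ follows by the same argument with $\operatorname{Proj}$ replaced by $\operatorname{Link}$, $k$ by $k-1$ on the lower complexes, and $F$ by $\overline{E}$ throughout (using the second identity of Lemma \ref{incr}). First I would observe that the formula for $\pi_{n,k}(T,T',Y')$ is, for each fixed pair $(T',Y')$ with $T' \in \mathscr{T}_{n-1,k}$ and $Y' \in \mathscr{C}_{n-1,k}$, the product of the number $\mu_{n-1,k}(Y')\nu_{n-1,k}(T')$ with the conditional law $\nu_{n,k}(T \mid \operatorname{Proj}(n,T) = T'\setminus Y')$; this is well-defined precisely when $\nu_{n,k}(S : \operatorname{Proj}(n,S) = T'\setminus Y') > 0$, and one must check it is harmless to leave $\pi_{n,k}$ zero when this denominator vanishes (i.e. the set of such bad $(T',Y')$ has $\mu_{n-1,k}\otimes\nu_{n-1,k}$-measure contributing nothing, because summing the numerator over $T$ gives $0$ there anyway).

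Next I would check that $\pi_{n,k}$ sums to $1$. Summing over $T$ first collapses the conditional law to $1$ on each admissible $(T',Y')$, leaving $\sum_{T',Y'} \mu_{n-1,k}(Y')\nu_{n-1,k}(T') = 1$ since $\mu_{n-1,k}$ and $\nu_{n-1,k}$ are each probability measures — here one uses that $\nu_{n-1,k}$ is supported on $\mathscr{T}_{n-1,k}$ and $\mu_{n-1,k}$ on $\mathscr{C}_{n-1,k}$, so all terms are accounted for. The key nontrivial input is that this sum genuinely telescopes correctly, which is where the first identity of Lemma \ref{incr} enters: it guarantees that $\nu_{n,k}(S:\operatorname{Proj}(n,S)=F)$ equals $\nu_{n-1,k}(T':T'\supseteq F)(1-1/n)^{|F|}n^{|F|-\binom{n-2}{k}}$, and I would need this to identify the $(T',Y')$-marginal of $\pi_{n,k}$ (before coupling) with the product measure, as follows. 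For the $\mathcal{T}_{n,k}$-marginal: summing $\pi_{n,k}(T,T',Y')$ over $(T',Y')$ with $T'\setminus Y' = \operatorname{Proj}(n,T)$ fixed, the numerator is constant in $(T',Y')$ and equals $\nu_{n,k}(T)$ times $\sum \mu_{n-1,k}(Y')\nu_{n-1,k}(T')$ over pairs with $T'\setminus Y' = F$; dividing by $\nu_{n,k}(S:\operatorname{Proj}(n,S)=F)$ I must get exactly $\nu_{n,k}(T)$, so I need
$$\sum_{\substack{T'\in\mathscr{T}_{n-1,k},\, Y'\in\mathscr{C}_{n-1,k} \\ T'\setminus Y' = F}} \mu_{n-1,k}(Y')\,\nu_{n-1,k}(T') = \nu_{n,k}\big(S:\operatorname{Proj}(n,S)=F\big).$$
Unwinding $\mu_{n-1,k}$ — the density of $\mathcal{Y}_k(n-1,n^{-1})$, so $\mu_{n-1,k}(Y') = n^{-|Y'|}(1-1/n)^{\binom{n-1}{k+1}-|Y'|}$ — and summing over $Y' \supseteq F$ (equivalently over $T' = F \cup (\text{anything}) \supseteq F$, with $Y'$ determined as $T' \setminus F$ on the faces outside... ) reduces to $\nu_{n-1,k}(T':T'\supseteq F)$ times a power of $1/n$ and $1-1/n$, which by Lemma \ref{incr} is exactly $\nu_{n,k}(S:\operatorname{Proj}(n,S)=F)$. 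The independence of $\mathcal{T}_{n-1,k}$ and $\mathcal{Y}_{n-1,k}$ under the $(T',Y')$-marginal is then immediate from the product form $\mu_{n-1,k}(Y')\nu_{n-1,k}(T')$ once the $T$-sum is carried out.

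The main obstacle is the bookkeeping in the displayed summation identity above: one has to be careful that as $Y'$ ranges over $k$-complexes and $T'$ over hypertrees with $T'\setminus Y' = F$, the pair is in bijection with $\{Y' : Y' \supseteq F_{\text{complement part}}\}$ in the right way so that the Linial–Meshulam density factor assembles into the $(1-1/n)^{|F|}n^{|F|-\binom{n-2}{k}}$ of Lemma \ref{incr} — in particular one uses that $T'$ is forced to have exactly $\binom{n-2}{k}$ top faces (\cite{Kalai}, Proposition 2), so $|Y'| = |T'| - |F| + |{T'}\cap\overline{F}|$-type counting is rigid. Everything else is formal: the support statement $\pi_{n,k}(\operatorname{Proj}(n,T)=T'\setminus Y')=1$ is built into the indicator, and the $\lambda_{n,k}$ case is the mirror image using $\det(\partial\partial^t)_{\overline E,\overline E}$, Corollary \ref{newdef}, and the second half of Lemma \ref{incr}.
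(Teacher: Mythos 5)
Your proposal is correct and takes essentially the paper's route: you verify the marginals of $\pi_{n,k}$ (and, by symmetry, $\lambda_{n,k}$) directly, with Lemma \ref{incr} together with the rigidity $|T'_k|=\binom{n-2}{k}$ for every $T'\in\mathscr{T}_{n-1,k}$ supplying exactly your displayed summation identity, which is the same computation the paper packages as the rewriting (\ref{givenT}) before summing over $Y'$ and then $T'$. The one spot to repair is your parenthetical about vanishing denominators, which as written is circular (the numerator summing to zero over $T$ would not rescue the $(T',Y')$--marginal if bad pairs carried positive product mass); in fact the denominator never vanishes, since $T'\setminus Y'$ is contained in the hypertree $T'$, so Lemma \ref{incr} gives $\nu_{n,k}\left(S:\operatorname{Proj}(n,S)=T'\setminus Y'\right)\geq\nu_{n-1,k}(T')\,(1-1/n)^{|T'\setminus Y'|}\,n^{|T'\setminus Y'|-\binom{n-2}{k}}>0$, and similarly for the $\operatorname{Link}$ case.
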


\begin{proof}
It suffices to show that $\pi_{n,k}$ has the correct marginal densities, as the proof for $\lambda_{n,k}$ is basically identical. Summing over all $T$ clearly produces the independent coupling of $\mathcal{T}_{n-1,k}$, $\mathcal{Y}_{n-1,k}$. For the remaining marginal, Corollary \ref{incr} gives us that \begin{equation}\label{givenT}
\pi_{n,k}(T,T',Y')=\nu_{n,k}(T)\frac{\nu_{n-1,k}(T')\one\{T'\supseteq\operatorname{Proj}(n,T)\}}{\nu_{n-1,k}(S:S\supseteq\operatorname{Proj}(n,T))}\frac{\mu_{n-1,k}(Y')\one\{\operatorname{Proj}(n,T)=T'\setminus Y'\}}{\mu_{n-1,k}(S:T'\setminus S=\operatorname{Proj}(n,T))}
\end{equation}
which, summed over $Y'$ and then $T'$, gives the desired marginal. 
\end{proof}

\begin{proof}[Proof of Theorem \ref{main}]
It suffices to show that 
\begin{equation}\label{density}
(T,T',T'',Y',Y'')\mapsto\frac{\pi_{n,k}(T,T',Y')\lambda_{n,k}(T,T'',Y'')}{\nu_{n,k}(T)}
\end{equation}
is a probability density with the claimed marginal densities. Considering expression (\ref{givenT}), summing (\ref{density}) over $Y'$ and $T'$ gives $\lambda_{n,k}(T,T'',Y'')$, which we know to have the desired marginal densities. One can deduce by a symmetric argument that the marginal densities with respect to $Y'$ and $T'$ are also correct.  
\end{proof}


\section{Spectral Estimates}
For this section, we use asymptotic notation, $o()$ and $O()$, to describe the behavior of a function of $n$ as $n\to\infty$. Let $A$ be the adjacency matrix of a random graph $\G=\G(n,p,E,M)$ on $[n]$ satisfying the following: 
\begin{enumerate}
    \item $\Pr[e\in\G]=p\in(0,1)$ for all $e\in{[n]\choose2}$.
    \item There is a fixed constant $E\geq1$ and an $M=M(n)>0$ such that, for every $t\in[0,M]^{[n]\choose2}$, we have $\E\exp\left(\sum_{1\leq i<j\leq n}t_{ij}A_{ij}\right)\leq E\prod_{1\leq i<j\leq n}\left(1-p+pe^{t_{ij}}\right)$.
\end{enumerate}
The choice to make $E$ a fixed constant is just for convenience. All of the results of this section can be easily adapted to work for $E=E(n)$ an arbitrary fixed polynomial in the variable $n$. 

\begin{proposition}\label{gspectrum}
Let $\G$ be defined as above with $p=\frac{\delta\log n}{n}$ ($\delta$ an arbitrary constant), and $M\geq\frac{3}{5}(n/p)^{\frac{1}{2}}$. Then, for any fixed $s>0$, we have with probability at least $1-o(n^{-s})$ that $$v^tAv=O(\sqrt{np})\text{ for all unit vectors }v\perp\one.$$
\end{proposition}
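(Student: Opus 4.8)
The plan is to bound the spectral radius of $A - pJ$ (restricted to the orthogonal complement of $\one$) by controlling the moment generating function of quadratic forms and then applying a standard net argument over the sphere. Since for any unit $v \perp \one$ we have $v^t A v = v^t(A - pJ)v$, and $A - pJ$ has independent-ish upper-triangular entries with mean zero, this reduces the problem to estimating $\sup_{v} v^t (A-pJ) v$ over a suitable $\varepsilon$-net $\mathcal{N}$ of the unit sphere in $\R^n$; by a standard argument the supremum over the sphere is at most $(1-2\varepsilon)^{-1}$ times the supremum over $\mathcal{N}$, and one can take $|\mathcal{N}| \leq (3/\varepsilon)^n = e^{O(n)}$.

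The core of the argument is a tail bound for a single quadratic form $v^t(A - pJ)v = \sum_{i<j} 2 v_i v_j (A_{ij} - p)$ for fixed $v$. Here is where hypothesis (2) does the work: writing $t_{ij} = \lambda \cdot 2 v_i v_j$ for a parameter $\lambda$ to be optimized (and noting $|2v_iv_j| \leq 1$ so $|t_{ij}| \leq \lambda \leq M$ provided we keep $\lambda \leq M$), hypothesis (2) gives
\begin{align*}
\E \exp\Bigl( \lambda \sum_{i<j} 2 v_i v_j A_{ij} \Bigr) \leq E \prod_{i<j}\bigl(1 - p + p e^{2\lambda v_i v_j}\bigr).
\end{align*}
Using $1 - p + pe^x \leq \exp\bigl(p(e^x - 1)\bigr) \leq \exp\bigl(px + p x^2\bigr)$ valid for $|x|$ bounded (say $|x| \leq 1$, which holds when $\lambda \leq 1/2$), together with $\sum_{i<j} v_i v_j = \tfrac12((\sum v_i)^2 - 1) = -\tfrac12$ (since $v \perp \one$ and $\|v\|=1$) and $\sum_{i<j}(2v_iv_j)^2 \leq (\sum_i v_i^2)^2 = 1$, the product is at most $\exp\bigl(-\lambda p + O(p\lambda^2)\bigr)$. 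After recentering by the mean $-\lambda p \cdot 2 = $ (accounting for the $pJ$ subtraction) this yields $\E \exp(\lambda \, v^t(A-pJ)v) \leq E \exp(O(p\lambda^2))$, and hence by Markov, $\Pr[v^t(A-pJ)v \geq \tau] \leq E \exp(O(p\lambda^2) - \lambda \tau)$. Optimizing $\lambda \asymp \tau/(np \cdot c)$ — and checking that the optimal $\lambda$ respects the constraint $\lambda \lesssim M^{-1}\cdot(\text{stuff})$, which is exactly what the hypothesis $M \geq \tfrac35(n/p)^{1/2}$ is engineered to guarantee for $\tau \asymp \sqrt{np}$ — gives a bound of the shape $E\exp(-c\tau^2/(np))$ for $\tau$ not too large.

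I would then take a union bound over the net: choosing $\tau = C\sqrt{np}\cdot\sqrt{n + s\log n}$ would be too weak, so instead I take $\tau = C\sqrt{np}$ with $C$ a large constant and observe that $np = \delta \log n$, so the per-vector bound $E e^{-c\tau^2/(np)} = E e^{-cC^2}$ is a small constant, but the net has size $e^{O(n)}$ — this does not immediately close. The resolution is that $\tau = C\sqrt{np}$ should be replaced in the exponent analysis by tracking that $n = np/\delta \cdot (\text{const}) $; more carefully, one needs $\tau \asymp \sqrt{np}\cdot K$ with $e^{-cK^2 \cdot np/(np)}\cdot e^{Cn}$ small, i.e. $cK^2 \geq$ something like $Cn/1 + s\log n$, forcing $\tau \asymp \sqrt{np}\cdot\sqrt{n}$... which contradicts the claimed $O(\sqrt{np})$. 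The honest statement is that this ``simple net'' bound only gives $O(\sqrt{np \log n}) = O(\log n)$ unless one uses a sharper technique; so the real proof must bootstrap — I would instead invoke the Kahn–Szemerédi / Feige–Ofek style argument (as in \cite{HKP}), splitting the quadratic form into a ``light pairs'' contribution (bounded by the MGF/net argument above, which does give $O(\sqrt{np})$ on the restricted range) and a ``heavy pairs'' contribution (bounded by a bounded-degree / discrepancy argument using that no vertex has degree much exceeding $np$ and no small set of edges is too dense, both of which follow from hypothesis (2) via the same Chernoff bound applied to $0/1$-valued $t_{ij}$).

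The main obstacle, then, is the heavy-pairs bound: one must show that with probability $1 - o(n^{-s})$, the graph $\G$ has no vertex of degree exceeding $O(np)$ (for the bounded-degree regime) and that edge counts $e(S,T)$ between vertex subsets satisfy the discrepancy inequality $e(S,T) \leq O\bigl(\mu(S,T) + \sqrt{\text{stuff}}\bigr)$ where $\mu(S,T) = p|S||T|$. Both of these are pure large-deviation statements about sums of the (essentially independent, by hypothesis (2)) edge indicators, so hypothesis (2) with the specific threshold $M \geq \tfrac35(n/p)^{1/2}$ is precisely what lets the Chernoff-type bounds reach down to probability $o(n^{-s})$ over all $2^n \cdot 2^n$ choices of $(S,T)$ while keeping the deviation term $O(\sqrt{np})$-sized after the net and combinatorial summations. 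Once both contributions are controlled, combining them with the triangle inequality over the sphere gives $|v^t A v| = |v^t(A - pJ)v| = O(\sqrt{np})$ for all unit $v \perp \one$, which is the claim.
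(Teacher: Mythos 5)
Your final argument---after correctly diagnosing that the naive net/MGF bound cannot reach $O(\sqrt{np})$---is exactly the route the paper takes: the Kahn--Szemer\'edi/Feige--Ofek decomposition into light couples (handled by a Bernstein-type bound derived from hypothesis (2), where the threshold $M\geq\tfrac{3}{5}(n/p)^{1/2}$ is used) and heavy couples (handled by the bounded-degree and discrepancy conditions of Lemma \ref{Conditions}, verified via the Chernoff bound of Lemma \ref{Chernoff}), combined through the grid discretization of Lemma \ref{claim24}. This matches the paper's proof in substance; the only minor quibble is that the paper notes the $M$-condition is needed only for the light-couple Bernstein step, not for the heavy-couple Chernoff estimates, where a constant lower bound on $M$ suffices.
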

\begin{proof}
This will follow from Lemmas \ref{claim24}, \ref{Light}, \ref{Conditions}, and \ref{mainlemma}. 
\end{proof}

\begin{corollary}\label{redlap}
Let $L:=\Id-D^{-\frac{1}{2}}AD^{-\frac{1}{2}}$ where $D$ is the degree matrix of  $\G=\G\left(n,\frac{\delta\log n}{n},O(1),\frac{3n}{5\sqrt{\delta\log n}}\right)$. Suppose there are positive integers $s=O(1)$ and $m\geq\sqrt{\log n}$ such that  $\Pr[\min_{i\in[n]}\deg_{\G}(i)\geq  m]=1-o(n^{-s})$. Let  $\lambda_1(L)\leq\lambda_2(L)\leq\cdots\leq\lambda_n(L)$ denote the eigenvalues of $L$. Then, with probability $1-o(n^{-s})$, we have $\lambda_1(L)=0$, and 
$$\lambda_2(L)=1-O\left(\frac{\sqrt{\log n}}{m}\right).$$
\end{corollary}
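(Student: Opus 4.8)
The plan is to transfer the operator-norm bound from Proposition \ref{gspectrum} to the normalized Laplacian $L$, using the degree lower bound $m$ to control the discrepancy between $D^{-1/2} A D^{-1/2}$ and $\frac{1}{np} A$ restricted to the orthocomplement of the relevant near-null vector. First I would note that $\lambda_1(L) = 0$ always holds with eigenvector $v_0 := D^{1/2}\one$, since $L D^{1/2}\one = D^{1/2}\one - D^{-1/2} A \one$ and $A\one$ is the degree vector $D\one$, so $D^{-1/2} A\one = D^{1/2}\one$; this needs no probabilistic input. For the spectral gap, $\lambda_2(L)$ is the minimum of the Rayleigh quotient $\langle Lv, v\rangle / \langle v, v\rangle$ over $v \perp v_0$. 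Writing $\langle Lv, v\rangle = \|v\|^2 - \langle A D^{-1/2} v, D^{-1/2} v\rangle$, I set $w := D^{-1/2} v$, so that $v \perp v_0$ becomes $w \perp D\one$, i.e. $\sum_i d_i w_i = 0$, and the quantity to bound is $1 - \langle Aw, w\rangle / \langle Dw, w\rangle$.

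The key step is to relate $\langle Aw, w\rangle$ to the bound from Proposition \ref{gspectrum}, which applies to vectors orthogonal to $\one$ in the \emph{un}normalized sense. I would decompose $w = c\one + u$ with $u \perp \one$; then $c = \langle w, \one\rangle / n$, and the constraint $\sum_i d_i w_i = 0$ together with $d_i = (1+o(1)) np$ uniformly (which holds on the event of Proposition \ref{gspectrum}, since degrees concentrate — this is implicit in the $M$-moment hypothesis, and in any case the minimum degree is $\geq m \geq \sqrt{\log n}$ by assumption) forces $|c|$ to be small relative to $\|u\|$. Concretely, $0 = \sum_i d_i (c + u_i) = cn\bar d + \sum_i (d_i - \bar d) u_i$ where $\bar d$ is the average degree, so $|c| \le \frac{1}{n \bar d}\big|\sum_i (d_i - \bar d) u_i\big| \le \frac{\|d - \bar d\one\| \|u\|}{n\bar d}$, and $\|d - \bar d \one\| = O(\sqrt{n \cdot np}) = O(n\sqrt{p})$ by the same concentration, giving $|c| = O(\|u\|/\sqrt{n})$. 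Then $\langle Aw, w\rangle = \langle Au, u\rangle + 2c\langle Au, \one\rangle + c^2 \langle A\one, \one\rangle$; the first term is $O(\sqrt{np})\|u\|^2$ by Proposition \ref{gspectrum}, the third is $c^2 \cdot O(n^2 p) = O(\|u\|^2 p) = o(\sqrt{np}\,\|u\|^2)$, and the cross term is $2c \cdot O(\sqrt{n}\cdot\sqrt{np}\,\|u\|)$ — wait, $\langle Au, \one\rangle = \langle u, A\one\rangle = \langle u, d\rangle = \langle u, d - \bar d\one\rangle = O(n\sqrt p)\|u\|$, so the cross term is $O(\|u\|/\sqrt n \cdot n\sqrt p \|u\|) = O(\sqrt{np}\,\|u\|^2)$, which is fine. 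Hence $\langle Aw, w\rangle = O(\sqrt{np})\|u\|^2 = O(\sqrt{np})\|w\|^2$ since $\|w\|^2 = c^2 n + \|u\|^2 = (1+o(1))\|u\|^2$. Finally $\langle Dw, w\rangle \geq m \|w\|^2$ by the minimum-degree bound, so $\langle Aw,w\rangle/\langle Dw,w\rangle = O(\sqrt{np}/m) = O(\sqrt{\log n}/m)$ after substituting $np = \delta\log n$, yielding $\lambda_2(L) \geq 1 - O(\sqrt{\log n}/m)$. The matching upper bound $\lambda_2(L) \leq 1 - \Omega(\ldots)$ is not actually claimed (the statement is an equality only in the $O$ sense of the error term), so a one-sided estimate plus the trivial $\lambda_2(L) \le 1 + O(\ldots)$, or simply exhibiting a test vector achieving a Rayleigh quotient within the error bound, suffices; I would use the test vector $w = e_i - e_j$ for a typical pair, or note $\lambda_2(L) \le \lambda_n(L) = 1 + O(\sqrt{np}/m)$ by applying the same argument to $-A$.

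The main obstacle I anticipate is bookkeeping the degree concentration cleanly: Proposition \ref{gspectrum} as stated only gives the quadratic-form bound on $\one^\perp$, and I need uniform two-sided control $d_i = (1+o(1))np$ (not just $d_i \geq m$) to make the $c$-vs-$\|u\|$ comparison and to replace $\langle Dw,w\rangle$ by $(1+o(1))np\|w\|^2$ if one wants the sharper constant. This upper concentration should follow from the moment hypothesis (2) on $\G(n,p,E,M)$ via a Chernoff/union bound, but it is not spelled out in the excerpt, so I would either cite the analogous lemma from \cite{HKP} or insert a short sub-lemma deriving $\Pr[\max_i d_i \le (1+\varepsilon)np] = 1 - o(n^{-s})$ from (2). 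Everything else is the routine Rayleigh-quotient manipulation sketched above.
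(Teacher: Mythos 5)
Your overall skeleton (Rayleigh quotient for $L$ on the orthocomplement of $D^{1/2}\one$, the substitution $w=D^{-1/2}v$, the decomposition $w=c\one+u$ with $u\perp\one$, Proposition \ref{gspectrum} for the $\one^{\perp}$ part, and the minimum--degree hypothesis to lower bound $w^tDw$) matches the paper's proof, but your handling of the cross and constant terms has a genuine gap. You estimate $|c|$ and $\one^tAu$ by Cauchy--Schwarz together with uniform two--sided degree concentration $d_i=(1+o(1))np$, and you propose to extract this from hypothesis (2) on $\G(n,p,E,M)$. But (2) bounds the moment generating function only for $t\in[0,M]$, so it controls only upper tails of degrees; the class $\G(n,p,E,M)$ carries no lower--tail information beyond the assumed event $\min_i\deg_{\G}(i)\geq m$, and $m$ is allowed to be as small as $\sqrt{\log n}\ll np=\delta\log n$. (This is exactly why, in the application, the paper proves a separate negative--$t$ MGF lemma for the specific graph $G=L_m^{k,k-1}(n)$ rather than invoking concentration.) Nor does Proposition \ref{gspectrum} itself give degree concentration, since it only controls the quadratic form on $\one^{\perp}$ and says nothing about $A\one$. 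If you replace the unavailable concentration by what (2) does yield, namely $\max_i d_i\le c_0np$ w.h.p., then $\sum_i(d_i-\bar d)^2$ can only be bounded by $c_0np\cdot\tr D$, your cross and constant terms become of order $(np/m)\|u\|^2$, and the final bound degrades to $O(\log n/m)$ rather than the claimed $O(\sqrt{\log n}/m)$.

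The repair is to use the constraint $w\perp D\one$ exactly instead of through concentration: it gives $u^td=-c\,\tr D$, hence $w^tAw=u^tAu-c^2\tr D\le u^tAu=O(\sqrt{np})\|u\|^2\le O(\sqrt{np})\|w\|^2$, while $w^tDw\ge m\|w\|^2$, so the cross and constant terms cancel (in fact help) and no upper bound or concentration on degrees is needed at all. This is precisely what the paper does: it writes the unit vector as $\cos\theta\,u+\sin\theta\,v$ with $v\perp\one$, shows $x^t(\lambda D-A)x=\sin^2\theta\,v^t(\lambda D-A)v+\cos^2\theta\,(1-\lambda)\tr D/n$, discards the nonnegative second term, and is left needing only $v^tAv\le\lambda v^tDv$, which follows from Proposition \ref{gspectrum} together with $v^tDv\ge m$. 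Your identification of $\lambda_1(L)=0$ via the eigenvector $D^{1/2}\one$ and your remark that only a one--sided estimate on $\lambda_2$ is required are fine.
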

\begin{proof}
Since $\G$ is connected with sufficiently high probability, we will treat $\G$ as though it were connected almost surely. As such, we know that $L$ has minimal eigenvalue $0$ with multiplicity one, and the corresponding eigenvector is $D^{\frac{1}{2}}\one$. Thus we are interested in bounding the quantity
$$\sup\left\{\frac{y^tD^{-\frac{1}{2}}AD^{-\frac{1}{2}}y}{y^ty}:0\neq y\perp D^{\frac{1}{2}}\one\right\}$$
from above by some $\lambda:=O(\sqrt{\log n}/m)$. Equivalently, we would like to show that 
$$x^tAx\leq\lambda x^tDx\text{ for all }x\perp D\one.$$
Without loss of generality, we can assume that $x$ is a unit vector. Let 
$$x=\cos\theta u+\sin\theta v\quad\text{where}\quad u=\frac{1}{\sqrt{n}}\one\text{, }v\perp\one\text{, and }|v|=1.$$
Noting that $u^tAx=u^tDx=0$ and $\cos\theta\frac{\tr D}{\sqrt{n}}=-\sin\theta v^tD\one$ (both of these follow from the assumption that $x\perp D\one$), we have 
$$x^tAx=\sin^2\theta v^tAv-\cos^2\theta\frac{\tr D}{n}\quad\text{and}\quad x^tDx=\sin^2\theta v^tDv-\cos^2\theta\frac{\tr D}{n}.$$
So we have 
\begin{align*}
x^t(\lambda D-A)x&=v^t(\lambda D-A)v\sin^2\theta+\frac{(1-\lambda)\tr D}{n}\cos^2\theta \\ 
&=v^t(\lambda D-A)v\frac{(\tr D)^2}{(\tr D)^2+n(v^tD\one)^2}+\frac{(1-\lambda)\tr D}{n}\frac{n(v^tD\one)^2}{(\tr D)^2+n(v^tD\one)^2}
\end{align*}
which we would like to show is positive. Solving for $v^tAv$, 
it suffices to show that 
$$v^tAv\leq(1-\lambda)\frac{(v^tD\one)^2}{\tr D}+\lambda v^tDv\quad\text{for all unit vectors }v\perp\one.$$
By our minimum degree assumption, it would even suffice to show that $v^tAv\leq\lambda m$. The result now follows from Proposition \ref{gspectrum}. 
\end{proof}

In order to prove Lemma \ref{Light}, we are going to need a version of Bernstein's inequality that works on weighted sums of centered edge indicators of $\G$. 
\begin{theorem}[Bernstein's inequality]
Let $\G$ be as above. Suppose $|c_{ij}|\leq c$ for some fixed $c$ and all $i<j$, and that $\varepsilon\geq0$ is such that $M\geq\frac{\varepsilon}{p\sum_{i<j}c_{ij}^2+2c\varepsilon/3}$. Then, for any $H\subseteq{[n]\choose2}$, we have
\begin{align*}
\Pr\left[\sum_{(i,j)\in H}c_{ij}(A_{ij}-p)\geq\varepsilon\right]&\leq E\exp\left(-\frac{\varepsilon^2}{2p\sum_{(i,j)\in H}c_{ij}^2+2c\varepsilon/3}\right). 
\end{align*} 
\end{theorem}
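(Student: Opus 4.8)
The plan is to reduce the stated Bernstein bound to the standard Bernstein inequality for bounded independent random variables, using hypothesis (2) on $\G$ to control the moment generating function and to legitimize the truncation of the exponential series up to the scale $M$. First I would fix $H\subseteq\binom{[n]}{2}$ and the coefficients $c_{ij}$ with $|c_{ij}|\le c$, and write $S:=\sum_{(i,j)\in H}c_{ij}(A_{ij}-p)$. The usual Chernoff approach gives, for any $\lambda\ge 0$, $\Pr[S\ge\varepsilon]\le e^{-\lambda\varepsilon}\,\E e^{\lambda S}=e^{-\lambda\varepsilon}e^{-\lambda p\sum_{H}c_{ij}}\,\E\exp\bigl(\sum_{H}\lambda c_{ij}A_{ij}\bigr)$. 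Setting $t_{ij}:=\lambda c_{ij}$ for $(i,j)\in H$ and $t_{ij}:=0$ otherwise, hypothesis (2) applies provided each $|t_{ij}|=\lambda|c_{ij}|\le\lambda c\le M$; it yields $\E\exp(\sum_H t_{ij}A_{ij})\le E\prod_H(1-p+pe^{\lambda c_{ij}})$. Combining, $\Pr[S\ge\varepsilon]\le E\exp\bigl(-\lambda\varepsilon+\sum_{(i,j)\in H}\bigl[\log(1-p+pe^{\lambda c_{ij}})-\lambda p c_{ij}\bigr]\bigr)$.

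Next I would bound each summand by the standard pointwise estimate $\log(1-p+pe^{x})-px\le \frac{px^2}{2}\cdot g(|x|)$ where $g(u)=\frac{e^{u}-1-u}{u^2/2}\cdot\frac{1}{\text{something}}$ — more cleanly, I would invoke the elementary inequality that for $|x|\le 3$ (say), $\log(1-p+pe^{x})-px\le \dfrac{px^2}{2(1-|x|/3)}$, or in the form best suited here, $\le \dfrac{px^2}{2}+\dfrac{p|x|^3}{6}+\cdots$ summed geometrically. With $x=\lambda c_{ij}$, $|x|\le\lambda c$, this gives $\sum_{(i,j)\in H}[\log(1-p+pe^{\lambda c_{ij}})-\lambda p c_{ij}]\le \dfrac{\lambda^2 p\sum_{H}c_{ij}^2}{2(1-\lambda c/3)}$ whenever $\lambda c<3$. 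Writing $\sigma^2:=p\sum_{(i,j)\in H}c_{ij}^2$, we obtain $\Pr[S\ge\varepsilon]\le E\exp\bigl(-\lambda\varepsilon+\tfrac{\lambda^2\sigma^2}{2(1-\lambda c/3)}\bigr)$, and then I would plug in the classical optimal choice $\lambda=\dfrac{\varepsilon}{\sigma^2+c\varepsilon/3}$, which indeed satisfies $\lambda c<3$, and simplify to get the bound $E\exp\bigl(-\dfrac{\varepsilon^2}{2\sigma^2+2c\varepsilon/3}\bigr)=E\exp\bigl(-\dfrac{\varepsilon^2}{2p\sum_{(i,j)\in H}c_{ij}^2+2c\varepsilon/3}\bigr)$, exactly as claimed.

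The one genuine subtlety — and the place I expect to have to be careful — is the admissibility condition $\lambda c\le M$ needed to apply hypothesis (2). With the optimizing value $\lambda=\varepsilon/(\sigma^2+c\varepsilon/3)$ this requires $\dfrac{c\varepsilon}{\,p\sum_{H}c_{ij}^2+c\varepsilon/3\,}\le M$, i.e. $M\ge\dfrac{c\varepsilon}{p\sum_{(i,j)\in H}c_{ij}^2+c\varepsilon/3}$. The hypothesis of the theorem, however, only assumes $M\ge\dfrac{\varepsilon}{p\sum_{i<j}c_{ij}^2+2c\varepsilon/3}$ — a sum over all pairs and with a slightly different denominator. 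So I would need to check that the stated hypothesis dominates the genuinely required inequality: since $c\ge\max|c_{ij}|$ one has $c\varepsilon\le$ (something), and since the full sum $\sum_{i<j}c_{ij}^2$ may be smaller or larger than $c^2\cdot|H|$, the cleanest route is to note $c\varepsilon/(p\sum_H c_{ij}^2+c\varepsilon/3)\le \varepsilon/(p\sum_{i<j}c_{ij}^2/c + \varepsilon/3)$ and then bound $p\sum_{i<j}c_{ij}^2/c$ from below appropriately, or simply to observe that in all intended applications $c=O(1)$ so the two conditions agree up to constants; I would state the book-keeping explicitly so that the constant in the hypothesis is seen to suffice. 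Everything else is the routine Chernoff/Bernstein computation.
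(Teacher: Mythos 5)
Your core computation is correct and follows essentially the paper's route: a Chernoff bound, hypothesis (2) to dominate the joint moment generating function by a product of Bernoulli factors, the elementary estimate $e^x\le 1+x+\frac{x^2}{2(1-x/3)}$, and an explicit choice of the exponential parameter. With your choice $\lambda=\varepsilon/(\sigma^2+c\varepsilon/3)$, where $\sigma^2:=p\sum_{(i,j)\in H}c_{ij}^2$, the exponent does evaluate to $-\varepsilon^2/(2\sigma^2+2c\varepsilon/3)$, as claimed.

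The genuine gap is exactly the point you flag and then leave open: the admissibility of your $\lambda$ under the stated hypothesis on $M$. Your argument needs $\lambda c_{ij}\le M$ for every pair, hence $M\ge c\varepsilon/(\sigma^2+c\varepsilon/3)$, and the assumption $M\ge\varepsilon/\bigl(p\sum_{i<j}c_{ij}^2+2c\varepsilon/3\bigr)$ does not imply this in general (take $c$ large); appealing to ``$c=O(1)$ in the intended applications'' does not prove the theorem as stated, and ``up to constants'' is not available because $M$ enters as a hard cutoff on the admissible exponential parameters, not as an order-of-magnitude quantity. The paper closes this by \emph{not} using the classical optimizer: it restricts the infimum to $t\in(0,M]$ and evaluates at $t=\varepsilon/(\sigma^2+2c\varepsilon/3)$, which is precisely the quantity the hypothesis bounds by $M$ (with the relevant $c_{ij}$ supported on $H$), and a short computation shows this seemingly suboptimal choice still yields the exponent $-\varepsilon^2/(2\sigma^2+2c\varepsilon/3)$; the remaining constraint $t\le 3/c$ coming from the series bound is automatic, being equivalent to $\varepsilon\ge-3\sigma^2/c$ with $\varepsilon\ge0$. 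So replacing your $\lambda$ by this $t$ makes your argument go through with no bookkeeping about $c$. One further caveat, which you share with the paper but should state explicitly: hypothesis (2) is assumed only for $t\in[0,M]^{\binom{[n]}{2}}$, so writing the condition as $|t_{ij}|\le M$ misreads it --- the argument requires the effective coefficients $\lambda c_{ij}$ (or $tc_{ij}$) to be nonnegative and at most $M$, which holds in the paper's application in Lemma \ref{Light} where the $c_{ij}$ are nonnegative and $c\le1$, but would need separate justification for genuinely signed coefficients.
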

\begin{proof}
By a Chernoff bound, the numerical bounds $1+x\leq e^x\leq1+x+\frac{3x^2}{6-2x}$ for $x\leq3$, and our assumptions about $\G$, we have for $t\leq\frac{3}{c}$ that 
\begin{align*}
\Pr\left[\sum_{(i,j)\in H}c_{ij}(A_{ij}-p)\geq \varepsilon\right]
&\leq E\inf_{t\in(0,M]}e^{-t\varepsilon}\prod_{(i,j)\in H}\left(1+\E\frac{3t^2c_{ij}^2(A_{ij}-p)^2}{6-2ct}\right) \\ 
&\leq E\inf_{t\in(0,M]}\exp\left(-t\varepsilon+\sum_{(i,j)\in H}\E\frac{3t^2c_{ij}^2(A_{ij}-p)^2}{6-2ct}\right) \\ 
&\leq E\inf_{t\in(0,M]}\exp\left(\frac{(p\sum_{(i,j)\in H}c_{ij}^2+2c\varepsilon/3)t^2-2\varepsilon t}{2-2ct/3}\right) \\ 
&=E\inf_{t\in(0,M]}\exp\left(\frac{(\sigma^2+2c\varepsilon/3)t^2-2\varepsilon t}{2-2ct/3}\right), 
\end{align*}
where $\sigma^2:=p\sum_{(i,j)\in H}c_{ij}^2$. 
Taking 
$t=\frac{\varepsilon}{\sigma^2+2c\varepsilon/3}$, we have
$2-2ct/3=\frac{2\sigma^2+2c\varepsilon/3}{\sigma^2+2c\varepsilon/3}$, and thus
\begin{align*}
\frac{(\sigma^2+2c\varepsilon/3)t^2-2\varepsilon t}{2-2ct/3}=((\sigma^2+2c\varepsilon/3)t-2\varepsilon)\frac{\varepsilon}{\sigma^2+2c\varepsilon/3}\frac{\sigma^2+2c\varepsilon/3}{2\sigma^2+2c\varepsilon/3}=-\frac{\varepsilon^2}{2\sigma^2+2c\varepsilon/3}. 
\end{align*}
Recalling that we were required to assume that $t\leq\frac{3}{c}$, what we have shown holds for our choice of $t$ as long as $\varepsilon\geq-3\sigma^2c^{-1}$.
\end{proof}

We will also want to be able to apply a near--optimal Chernoff bound to uniformly weighted sums of edge indicators. 
\begin{lemma}\label{Chernoff}
For $\G$ as above with $M\geq\log\frac{(1-p)\varepsilon}{1-p\varepsilon}$ where $\varepsilon\geq3$, we have 
$$\Pr\left[\sum_{(i,j)\in H}A_{ij}\geq\varepsilon p|H|\right]\leq E\exp\left(-\frac{\varepsilon\log\varepsilon}{3}p|H|\right).$$
\end{lemma}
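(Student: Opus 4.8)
The plan is to run a Chernoff/exponential--Markov argument of exactly the same shape as the proof of Bernstein's inequality just given, feeding it the truncated moment generating function bound in condition (2) of the standing hypotheses on $\G$. Fix $\varepsilon\ge 3$ and set $t:=\log\varepsilon>0$. First I would check that $t$ is an admissible exponent, i.e.\ $t\le M$: since $\varepsilon\ge 1$ we have $\frac{(1-p)\varepsilon}{1-p\varepsilon}\ge\varepsilon$ (the hypothesis on $M$ already presupposes $p\varepsilon<1$, so this quantity is well defined and positive), hence $\log\varepsilon\le\log\frac{(1-p)\varepsilon}{1-p\varepsilon}\le M$. Then apply Markov's inequality to $\exp\!\big(t\sum_{(i,j)\in H}A_{ij}\big)$, choosing the vector $(t_{ij})$ with $t_{ij}=t$ for $(i,j)\in H$ and $t_{ij}=0$ otherwise (so $(t_{ij})\in[0,M]^{[n]\choose 2}$, and the factors $1-p+pe^{0}=1$ off $H$ drop out), which by condition (2) gives
$$\Pr\Big[\textstyle\sum_{(i,j)\in H}A_{ij}\ge\varepsilon p|H|\Big]\le e^{-t\varepsilon p|H|}\,\E\exp\Big(\textstyle\sum_{i<j}t_{ij}A_{ij}\Big)\le E\,e^{-t\varepsilon p|H|}\,(1-p+pe^{t})^{|H|}.$$

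Next I would substitute $t=\log\varepsilon$ and bound $1-p+pe^{t}=1+p(\varepsilon-1)\le e^{p(\varepsilon-1)}$, obtaining
$$\Pr\Big[\textstyle\sum_{(i,j)\in H}A_{ij}\ge\varepsilon p|H|\Big]\le E\exp\big(p|H|(\varepsilon-1-\varepsilon\log\varepsilon)\big).$$
(One could instead retain the exact factor $(1-p+pe^{t})^{|H|}$ and minimize over $t\in(0,M]$; the unconstrained minimizer is $t=\log\frac{(1-p)\varepsilon}{1-p\varepsilon}$, which is precisely the threshold appearing in the hypothesis on $M$ — this is why that bound takes the form it does — but the cruder estimate above already suffices.) It then remains to compare exponents: I claim $\varepsilon-1-\varepsilon\log\varepsilon\le-\tfrac{\varepsilon\log\varepsilon}{3}$ for all $\varepsilon\ge 3$, equivalently $\varepsilon-1\le\tfrac23\varepsilon\log\varepsilon$, equivalently $3-3/\varepsilon\le 2\log\varepsilon$. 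This is a one--variable calculus check: the difference $2\log\varepsilon-(3-3/\varepsilon)$ is positive at $\varepsilon=3$ (as $2\log 3>2$) and has derivative $\tfrac{2\varepsilon-3}{\varepsilon^{2}}>0$ for $\varepsilon\ge 3$, so it stays positive. Substituting this estimate into the displayed bound yields $E\exp\!\big(-\tfrac{\varepsilon\log\varepsilon}{3}\,p|H|\big)$, as claimed.

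There is no serious obstacle here: the statement is a routine Chernoff bound, and the only two points that genuinely require attention are (a) confirming that the explicit exponent $t=\log\varepsilon$ lies inside the truncation window $(0,M]$ licensed by the hypothesis — which is exactly where the slightly awkward condition $M\ge\log\frac{(1-p)\varepsilon}{1-p\varepsilon}$ (the Chernoff--optimal exponent for a Bernoulli MGF, and an upper bound for $\log\varepsilon$ when $\varepsilon\ge 1$) earns its keep — and (b) the elementary inequality $\varepsilon-1-\varepsilon\log\varepsilon\le-\tfrac{\varepsilon\log\varepsilon}{3}$, which fails for $\varepsilon$ just above $1$ and is the reason the hypothesis restricts to $\varepsilon\ge 3$.
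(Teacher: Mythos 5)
Your proof is correct and follows essentially the same Chernoff argument as the paper: the paper plugs in the optimal exponent $t=\log\frac{(1-p)\varepsilon}{1-p\varepsilon}$ and then relaxes, while you plug in $t=\log\varepsilon$ directly, but both routes arrive at the same intermediate bound $E\exp\left(-(\varepsilon\log\varepsilon-\varepsilon+1)p|H|\right)$ and finish with the same elementary inequality valid for $\varepsilon\geq3$. Your verification that $t=\log\varepsilon\leq M$ under the stated hypothesis is the only additional step, and it is handled correctly.
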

\begin{proof}
As with the previous proof, 
\begin{align*}
\Pr\left[\sum_{(i,j)\in H}A_{ij}\geq \varepsilon p|H|\right]&\leq E\inf_{t\in(0,M]}e^{-t\varepsilon p|H|}(1-p+pe^t)^{|H|} \\ 
&=E\left(\frac{(1-p)\varepsilon}{1-p\varepsilon}\right)^{-\varepsilon p|H|}\left(\frac{1-p}{1-p\varepsilon}\right)^{|H|} \\ 
&=E\varepsilon^{-\varepsilon p|H|}\left(1+\frac{(\varepsilon-1)p}{1-p\varepsilon}\right)^{(1-p\varepsilon)|H|} \\ 
&\leq E\exp\left(-(\varepsilon\log\varepsilon-\varepsilon+1)p|H|\right).
\end{align*} 
For $\varepsilon\geq3$, this is bounded above by $E\exp\left(-\frac{\varepsilon\log\varepsilon}{3} p|H|\right)$. 
\end{proof}

To prove Proposition \ref{gspectrum}, we will adapt the Kahn--Szemer\'erdi argument. Let 
$$S:=\left\{v\in\R^n:|v|=1\text{ and }v\perp\one\right\}\quad\text{and}\quad T:=\left\{x\in\frac{1}{\sqrt{4n}}\Z^n:|x|\leq1\text{ and }x\perp\one\right\}.$$
The following lemma is a special case of Claim 2.4 in \cite{FO}. 
\begin{lemma}\label{claim24}
Suppose $|x^tAy|\leq c$ for all $x,y\in T$. Then $|v^tAv|\leq4c$ for all $v\in S$. 
\end{lemma}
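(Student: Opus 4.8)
The plan is to run the standard $\varepsilon$--net plus telescoping argument of the Kahn--Szemer\'edi method, of which Claim 2.4 of \cite{FO} is an instance. The only ingredient beyond elementary manipulation is a \emph{covering estimate}: every $v\in S$ lies within Euclidean distance $\tfrac12$ of some point of $T$. The mesh $\tfrac{1}{\sqrt{4n}}$ is chosen precisely so that coordinatewise rounding of $v$ produces a scaled--lattice vector $x$ with $|x_i|\le|v_i|$ and $|v-x|^2\le n\cdot(\tfrac{1}{\sqrt{4n}})^2=\tfrac14$, whence $|x|\le 1$ and $|v-x|\le\tfrac12$; the only genuine work is to repair the orthogonality defect $\sum_i x_i=\sum_i(x_i-v_i)$ introduced by rounding (this is why $x\perp\one$ is imposed in the definition of $T$) without spoiling these two bounds, and that bookkeeping is exactly what \cite{FO} carries out.

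Granting the covering estimate, I would argue as follows. Fix $v\in S$ and choose $x_0\in T$ with $|v-x_0|\le\tfrac12$. If $v\neq x_0$, then $v_1:=(v-x_0)/|v-x_0|$ is a unit vector, and it is orthogonal to $\one$ because both $v$ and $x_0$ are; hence $v_1\in S$ and we may choose $x_1\in T$ with $|v_1-x_1|\le\tfrac12$. Iterating (and stopping if some remainder vanishes) produces vectors $x_k\in T$ and scalars $s_k:=\prod_{j=1}^{k}|v_{j-1}-x_{j-1}|$ with $s_0=1$ and $s_k\le 2^{-k}$, such that $v=\sum_{k\ge 0}s_k x_k$, the series converging absolutely in $\R^n$ since $s_{k}|v_{k}|\le 2^{-k}\to 0$.

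Finally, expanding the quadratic form along this representation and using the hypothesis $|x_j^t A x_k|\le c$ for all $x_j,x_k\in T$,
\[
|v^t A v|=\Bigl|\sum_{j,k\ge 0}s_j s_k\, x_j^t A x_k\Bigr|\le c\Bigl(\sum_{k\ge 0}s_k\Bigr)^{2}\le c\Bigl(\sum_{k\ge 0}2^{-k}\Bigr)^{2}=4c,
\]
which is the claim. The main obstacle is the covering estimate: meeting all three constraints defining $T$ (scaled lattice, norm at most $1$, and orthogonality to $\one$) simultaneously at distance at most $\tfrac12$ from an arbitrary $v\in S$; once that is in hand, only the short telescoping computation above remains, and the constant $4$ is exactly $(\sum_k 2^{-k})^2$.
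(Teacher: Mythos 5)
You should first note that the paper does not prove this lemma at all---it is imported verbatim as a special case of Claim 2.4 of \cite{FO}---so the comparison is between your reconstruction and the Kahn--Szemer\'edi/Feige--Ofek argument itself. Your telescoping step is fine: granting that every $v\in S$ lies within Euclidean distance $\tfrac12$ of $T$, the expansion $v=\sum_k s_kx_k$ with $s_k\le 2^{-k}$ and the bilinear hypothesis do give $|v^tAv|\le c\bigl(\sum_k2^{-k}\bigr)^2=4c$. The genuine gap is exactly the point you wave off as ``bookkeeping.'' After rounding toward zero the defect $\sum_ix_i$ is a multiple $\tfrac{m}{\sqrt{4n}}$ of the mesh with $|m|$ possibly of order $n$, and it must be removed by bumping $|m|$ coordinates by one mesh step each. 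Bumping a coordinate in the defect-reducing direction either pushes $x_i$ past $v_i$ away from zero---in which case $|x|^2$ can gain $\sum_{\text{bumped}}|v_i|/\sqrt n+|m|/(4n)$, which for $|m|\asymp n$ is of order $1$, destroying $|x|\le1$---or pushes $x_i$ further from $v_i$ toward or past zero, in which case the per-coordinate error can double and $|v-x|$ can approach $1$, destroying the $\tfrac12$ bound. So the three constraints defining $T$ cannot all be preserved by the naive repair; some real geometric input about the sum-zero lattice is needed, and it is not supplied.

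The clean way to close the gap (and, as best I recall, essentially how Claim 2.4 of \cite{FO} runs, which also explains why the constant is $4=2^2$) is to show that $\tfrac12v\in\operatorname{conv}(T)$ for every $v\in S$. Set $p:=\sqrt{4n}\cdot\tfrac12v$, so $\sum_ip_i=0$, and consider the polytope $P=\{y\in\R^n:\lfloor p_i\rfloor\le y_i\le\lceil p_i\rceil,\ \sum_iy_i=0\}$. Every vertex of $P$ is integral: at a vertex at least $n-1$ box constraints are tight, and the remaining coordinate is minus an integer sum, hence an integer. Therefore $p$ is a convex combination of integer points $z$ with $\sum_iz_i=0$ and $|z_i-p_i|\le1$; rescaling, $\tfrac12v=\sum_j\lambda_jx_j$ with $x_j\in\tfrac{1}{\sqrt{4n}}\Z^n$, $x_j\perp\mathbf{1}$, and $|x_j|\le|\tfrac12v|+\tfrac12\le1$, i.e.\ $x_j\in T$. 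Then $|v^tAv|=4\,\bigl|\sum_{i,j}\lambda_i\lambda_j\,x_i^tAx_j\bigr|\le4c$, with no iteration needed. Alternatively your net scheme can be salvaged by rounding toward the shrunken target $\tfrac34v$ and invoking the covering radius (at most $\tfrac{\sqrt n}{2}$) of the lattice $\{z\in\Z^n:\sum_iz_i=0\}$ inside the hyperplane $\mathbf{1}^\perp$, but that is a nontrivial lattice fact, not bookkeeping; either supply one of these arguments or simply cite Claim 2.4 of \cite{FO} as the paper does.
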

We now write 
$$\sum_{(i,j)\in[n]^2}|x_iA_{ij}y_i|=\sum_{(i,j)\in\mathcal{L}}|x_iA_{ij}y_j|+\sum_{(i,j)\in\mathcal{H}}|x_iA_{ij}y_j|$$
where $\mathcal{L}:=\left\{(i,j)\in[n]^2:(x_iy_j)^2\leq\frac{p}{n}\right\}$ and $\mathcal{H}:=[n]^2\setminus\mathcal{L}$. 
\subsection{Light Couples}

\begin{lemma}\label{Light}
Suppose $M\geq\frac{3}{5}(n/p)^{\frac{1}{2}}$ in the definition of $\G$. For any constant $s>0$, we have 
$$\Pr\left[\sum_{(i,j)\in\mathcal{L}}|x_iA_{ij}y_j|\geq 7\sqrt{np}\text{ for some }x,y\in T\right]\leq E(18e^{-3})^n=o(n^{-s}).$$
\end{lemma}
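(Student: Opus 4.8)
The plan is to run the standard Kahn--Szemer\'edi ``light couples'' argument: bound the exponential moment of $\sum_{(i,j)\in\mathcal{L}}|x_iA_{ij}y_j|$ for a \emph{fixed} pair $x,y\in T$, and then pay for a union bound over all of $T$, whose cardinality is at most $(18)^n$ or so by a crude lattice-point count. Fix $x,y\in T$. Since the summand is not centered, first write $|x_iA_{ij}y_j| = |x_iy_j|A_{ij}$ (as $A_{ij}\in\{0,1\}$), so the sum is $\sum_{(i,j)\in\mathcal{L}}|x_iy_j|(A_{ij}-p) + p\sum_{(i,j)\in\mathcal{L}}|x_iy_j|$. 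The deterministic part is controlled by Cauchy--Schwarz: $p\sum_{(i,j)}|x_iy_j| = p(\sum_i|x_i|)(\sum_j|y_j|)\le p\cdot\sqrt n\cdot\sqrt n = np$ since $|x|,|y|\le1$, which is $\le\sqrt{np}$ for the range of $p$ at hand (indeed $np=\delta\log n=o(\sqrt{np}\cdot\sqrt{np})$; more carefully $np\le\sqrt{np}$ fails, so I will instead absorb $np$ into the $7\sqrt{np}$ budget using $np = o(\sqrt{np})\cdot$... — actually $np/\sqrt{np}=\sqrt{np}\to\infty$, so this is wrong; see below).

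Let me restructure. The coefficients $c_{ij}:=|x_iy_j|$ on $\mathcal{L}$ satisfy $c_{ij}^2\le p/n$ by definition of $\mathcal{L}$, so $c:=\sqrt{p/n}$ is the uniform bound, and the variance proxy is $\sigma^2 := p\sum_{(i,j)\in\mathcal{L}}c_{ij}^2 \le p\sum_{(i,j)\in[n]^2}|x_iy_j|^2 = p|x|^2|y|^2\le p$. Now apply the Bernstein inequality proved above to the centered sum $\sum_{(i,j)\in\mathcal{L}}c_{ij}(A_{ij}-p)$ with $\varepsilon$ chosen to be a constant multiple of $\sqrt{np}$, say $\varepsilon = 5\sqrt{np}$: the bound is $E\exp\left(-\varepsilon^2/(2\sigma^2+2c\varepsilon/3)\right)$. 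Here $\sigma^2\le p$ and $c\varepsilon = \sqrt{p/n}\cdot 5\sqrt{np}=5p$, so the denominator is at most $2p + 10p/3 < 6p$, giving a tail of at most $E\exp(-25np/6) \le E\exp(-4np)=E\exp(-4\delta\log n)$... but we need $e^{-\Theta(n)}$, not $n^{-\Theta(1)}$. So the centered fluctuation is genuinely much smaller than $\sqrt{np}$ — it is of order $\sqrt p$ up to $\mathrm{polylog}$ — and the real issue is that the \emph{mean} $p\sum_{(i,j)\in\mathcal L}|x_iy_j|$ can be as large as $np$, which is $\gg\sqrt{np}$. The resolution, which is the heart of the Kahn--Szemer\'edi bound, is that $\mathcal L$ is not all of $[n]^2$: on $\mathcal L$ one has the better estimate $\sum_{(i,j)\in\mathcal L}|x_iy_j|\cdot\mathbf{1} \le \sum_{(i,j)}|x_iy_j|\cdot\frac{|x_iy_j|\sqrt n}{\sqrt p} = \frac{\sqrt n}{\sqrt p}\sum_{(i,j)}(x_iy_j)^2 = \frac{\sqrt n}{\sqrt p}$, using $(x_iy_j)^2\le p/n$ on $\mathcal L$ only to bound... no: on $\mathcal L$, $|x_iy_j|\le\sqrt{p/n}$ so $|x_iy_j| = |x_iy_j|^2/|x_iy_j| \ge |x_iy_j|^2\sqrt{n/p}$, which is the wrong direction. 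Correctly: $\sum_{(i,j)\in\mathcal L}|x_iy_j| \le \sqrt{n/p}\sum_{(i,j)\in\mathcal L}(x_iy_j)^2 \cdot (\text{no})$. The right inequality is $|x_iy_j|^2\le\frac pn|x_iy_j|\cdot\sqrt{n/p}$... Let me just state it cleanly: on $\mathcal L$, $|x_iy_j|\le\sqrt{p/n}$, hence $\sum_{(i,j)\in\mathcal L}|x_iy_j|\le\sqrt{p/n}\cdot|\{(i,j):x_iy_j\ne0\}|$, and also $\sum_{(i,j)\in\mathcal L}|x_iy_j| = \sum_{(i,j)\in\mathcal L}\frac{(x_iy_j)^2}{|x_iy_j|}\ge\sqrt{n/p}\sum_{(i,j)\in\mathcal L}(x_iy_j)^2$ — again wrong direction for an upper bound. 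The genuinely correct bound is: $p\sum_{(i,j)\in\mathcal L}|x_iy_j| \le p\sum_{(i,j)}|x_iy_j|\le np$, AND we instead bound the whole quantity $\sum_{(i,j)\in\mathcal L}|x_iA_{ij}y_j|$ directly (not centered) via a Chernoff/Bernstein bound on $\sum|x_iy_j|A_{ij}$ with mean $p\sum|x_iy_j|\le np$ and then the tail at level $7\sqrt{np}$... but $7\sqrt{np}<np$, so one cannot deviate \emph{below} the mean to the target.

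\textbf{Correct plan.} The actual Kahn--Szemer\'edi light-couple bound, following Feige--Ofek \cite{FO} Claim (the source of Lemma \ref{claim24}), partitions $\mathcal L$ dyadically by the size of $|x_i|$ and $|y_j|$ and counts, for each dyadic block, how many nonzero coordinates it can have (since $\sum x_i^2\le1$, few coordinates can be large). First I would fix $x,y\in T$ and decompose $\sum_{(i,j)\in\mathcal L}|x_iA_{ij}y_j|$ into $O(\log^2 n)$ groups according to which dyadic scales $|x_i|\in(2^{-a-1},2^{-a}]$ and $|y_j|\in(2^{-b-1},2^{-b}]$ the indices fall in; within each group the coefficients are essentially constant, the number of available rows/columns is at most $4^a$ resp.\ $4^b$ by the $\ell^2$ constraint, and the membership in $\mathcal L$ forces $2^{-a-b}\lesssim\sqrt{p/n}$. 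For a single group I apply the Bernstein inequality proved above (with the uniform coefficient bound $c\approx 2^{-a-b}$ and variance proxy from the $\le4^{a+b}$ terms) to get that the group sum exceeds its mean plus $t\sqrt{np}/\log^2 n$ with probability $\le E e^{-\Omega(n)}$ — the $e^{-\Omega(n)}$ comes because on each group the number of potential nonzero entries is controlled and the relevant deviation-to-variance ratio ends up linear in $n$; then sum the means over all groups, which telescopes to $O(\sqrt{np})$ precisely because of the $\mathcal L$-constraint $2^{-a-b}\le\sqrt{p/n}$. Finally, union bound over $|T|\le 18^n$ (the crude count: coordinates in $\frac{1}{\sqrt{4n}}\Z$ with $\ell^2$-norm $\le1$ means each coordinate is one of $\le 4\sqrt n+1$ values but the norm bound cuts this to an $e^{O(n)}$ total; I will use the standard volumetric estimate $|T|\le(C)^n$ with the constant tuned so the product $E\cdot 18^n\cdot e^{-3n}=E(18e^{-3})^n$ comes out as claimed). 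The main obstacle — and the only genuinely delicate point — is bookkeeping the dyadic decomposition so that (i) each of the $O(\log^2 n)$ per-group Bernstein applications legitimately has its hypothesis $M\ge\varepsilon/(p\sum c_{ij}^2+2c\varepsilon/3)$ satisfied under the assumption $M\ge\frac35\sqrt{n/p}$, and (ii) the sum of the per-group means and the sum of the per-group failure probabilities both come in under the stated budgets $7\sqrt{np}$ and $E(18e^{-3})^n$ respectively; everything else is the routine Cauchy--Schwarz and counting that makes Claim 2.4-style arguments work. I would cite \cite{FO} for the structure and reproduce only the steps where the nonstandard moment hypothesis on $\G$ (rather than literal independence) is used, since that is where the Bernstein inequality of this section, rather than the classical one, is needed.
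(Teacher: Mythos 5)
Your final plan has a genuine gap, and it sits exactly at the point you flagged as "the heart" of the argument. You correctly noticed that the mean $p\sum_{(i,j)\in\mathcal{L}}|x_iy_j|$ can be of order $np\gg\sqrt{np}$, but the proposed fix — dyadic decomposition by coordinate size, with the per-block means "telescoping to $O(\sqrt{np})$ because of the $\mathcal{L}$-constraint" — is false. Take $x=y$ to be the balanced vector with entries $\pm1/\sqrt n$: it lies in (a perturbation of) $T$, every pair is light since $|x_iy_j|=1/n\le\sqrt{p/n}$, all pairs sit in one dyadic block, and the sum of block means is $p\cdot n^2\cdot\frac1n=np$. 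No decomposition of the \emph{absolute-value} sum can bring its mean down to $O(\sqrt{np})$. The mechanism that actually controls the mean is the one your plan never invokes: the sign structure together with $x,y\perp\one$. Since $\sum_{i,j}x_iy_j=(\one^tx)(\one^ty)=0$, and on heavy pairs $|x_iy_j|>\sqrt{p/n}$ forces $\sum_{(i,j)\in\mathcal{H}}|x_iy_j|\le\sqrt{n/p}\sum_{i,j}(x_iy_j)^2\le\sqrt{n/p}$, one gets $\bigl|p\sum_{(i,j)\in\mathcal{L}}x_iy_j\bigr|=\bigl|p\sum_{(i,j)\in\mathcal{H}}x_iy_j\bigr|\le\sqrt{np}$. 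This is the content of Lemma 2.6 of \cite{FO}, which is precisely what the paper cites for the mean; without it (or some substitute use of $x,y\perp\one$) a deviation bound alone cannot prove the lemma, because you would be asking the sum to fall below its own expectation. (Your dyadic-by-coordinate-size bookkeeping is really the heavy-couples machinery, which in this paper is delegated to Lemmas \ref{Conditions} and \ref{mainlemma}.)

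Ironically, your abandoned first attempt is essentially the paper's proof, and you discarded it because of an arithmetic slip: with $\varepsilon\asymp\sqrt{np}$, $\sigma^2\le p$ and $c=\sqrt{p/n}$, the Bernstein exponent is $\varepsilon^2/(2\sigma^2+2c\varepsilon/3)\asymp np/p=n$ — in your numbers $25n/6$, not $25np/6$ — so concentration at scale $\sqrt{np}$ really is $e^{-\Theta(n)}$, strong enough to absorb the $|T|\le18^n$ union bound. The paper's proof is exactly this: symmetrize to $B_{ij}:=\bigl(|x_iy_j|\one[(i,j)\in\mathcal{L}]+|x_jy_i|\one[(j,i)\in\mathcal{L}]\bigr)(A_{ij}-p)$ with $B_{ij}^2\le4p/n$ and $\sum_{i<j}\E B_{ij}^2\le2p$, apply the section's Bernstein inequality once at deviation $6\sqrt{np}$ to get $Ee^{-3n}$ (checking the hypothesis $M\ge6\sqrt{np}/(4p+8p\cdot\tfrac{2}{3}\cdot\tfrac{3}{4})$, i.e.\ $M\ge\tfrac35\sqrt{n/p}$, once — no per-block verification is needed), add the mean bound $\sqrt{np}$ from \cite{FO}, and union bound over $T$ to get $E(18e^{-3})^n$. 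To repair your writeup, drop the dyadic scheme, restore the one-shot Bernstein computation with the corrected exponent, and supply the mean estimate via the orthogonality/heavy-pair cancellation above.
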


\begin{proof}
We can bound the contribution from the light couples as follows: It is known (\cite{FO}, Claim 2.9) that $|T|\leq18^{n}$. So, by applying a union bound, we have 
$$\Pr\left[\sum_{(i,j)\in\mathcal{L}}|x_iA_{ij}y_j|\geq7\sqrt{np}\text{ for some }x,y\in T\right]\leq18^n\sup_{x,y\in T}\Pr\left[\sum_{(i,j)\in\mathcal{L}}|x_iA_{ij}y_j|\geq7\sqrt{np}\right].$$
Towards applying Bernstein's inequality, define centered random variables 
$$B_{ij}:=\left(|x_iy_j|\one\{(i,j)\in\mathcal{L}\}+|x_jy_i|\one\{(j,i)\in\mathcal{L}\}\right)(A_{ij}-p)$$ 
so that 
$$\sum_{\{i,j\}:(i,j)\in\mathcal{L}\text{ or }(j,i)\in\mathcal{L}}B_{ij}=\sum_{(i,j)\in\mathcal{L}}|x_iA_{ij}y_j|-\E\sum_{(i,j)\in\mathcal{L}}|x_iA_{ij}y_j|.$$
Note that each $B_{ij}^2\leq\frac{4p}{n}$ a.s., and $\sum_{i<j}\E B_{ij}^2\leq p\sum_{i<j}2\big((x_iy_j)^2+(x_jy_i)^2\big)\leq2p$ by taking advantage of the fact that $|x|,|y|\leq1$. Thus, by Bernstein's inequality, \begin{align*}
\Pr\left[\sum_{{\{i,j\}:(i,j)\in\mathcal{L}\text{ or }(j,i)\in\mathcal{L}}}B_{ij}\geq6\varepsilon\sqrt{np}\right]\leq E\exp\left(-\frac{(6\varepsilon)^2np}{4p+6\frac{4\sqrt{p}}{3\sqrt{n}}\sqrt{np}\varepsilon}\right)=E\exp\left(\frac{-9\varepsilon^2}{1+2\varepsilon}n\right). 
\end{align*}
Taking $\varepsilon=1$, this shows that $\sum_{(i,j)\in\mathcal{L}}|x_iA_{ij}y_j|<6\sqrt{np}+\E\sum_{(i,j)\in\mathcal{L}}|x_iA_{ij}y_j|$ with probability at least $1-Ee^{-3n}$. By Lemma 2.6 of \cite{FO}, we also have $\E\sum_{(i,j)\in\mathcal{L}}|x_iA_{ij}y_j|\leq\sqrt{np}$, thus giving us the desired bound. Our use of Bernstein's inequality  requires us to have $M\geq\frac{6\sqrt{np}}{2p+\frac{8\sqrt{p}}{\sqrt{n}}\sqrt{np}}=\frac{3}{5}(n/p)^{\frac{1}{2}}$. \end{proof}

\subsection{Heavy Couples}
For $B,C\subseteq[n]$, let 
$e(B,C):=|\{(i,j)\in\G:i\in B,\,j\in C\}|$ and $\mu(B,C):=p|B||C|$. 
The following is a weakened form of Lemma 9.1 in \cite{HKP}. 
\begin{lemma}\label{Conditions}
Suppose we have constants $c_0,c_1,c_2>1$ and a graph $G$ on $[n]$ with $\max_{i\in[n]}\deg_G(i)\leq c_0np$, and, for all $B,C\subseteq[n]$, one or more of the following hold:
\begin{itemize}
    \item $e(B,C)\leq c_1\mu(B,C)$  
    \item $e(B,C)\log\frac{e(B,C)}{\mu(B,C)}\leq c_2(|B|\vee|C|)\log\frac{n}{|B|\vee|C|}$ 
\end{itemize}
Then $\sum_{(i,j)\in\mathcal{H}}|x_iA_{ij}y_j|= O(\sqrt{np})$. 
\end{lemma}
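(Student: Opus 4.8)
The plan is to run the ``heavy couples'' half of the Kahn--Szemer\'edi argument in the form used by Feige--Ofek \cite{FO} (this lemma is essentially Lemma~9.1 of \cite{HKP}), with the two hypotheses playing the role of the discrepancy estimates, and to keep all constants uniform in $x,y\in T$. First I would discretize: for fixed $x,y\in T$, bucket coordinates dyadically, putting $i\in X_s$ when $2^{s-1}\le 2\sqrt n\,|x_i|<2^s$ (every nonzero coordinate lands in some bucket since entries of a $T$-vector are integer multiples of $1/(2\sqrt n)$), and defining $Y_t$ from $y$ analogously; set $\alpha_s:=|X_s|$, $\beta_t:=|Y_t|$. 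From $|x|\le 1$ one gets $\sum_s 4^s\alpha_s\le 16n$ and likewise $\sum_t 4^t\beta_t\le 16n$, and from $|x_i|\le1$ one gets $2^s\le 4\sqrt n$ (likewise $2^t$), so only $O(\log n)$ buckets are nonempty on each side. Since $|x_i|\le 2^s/(2\sqrt n)$ for $i\in X_s$ and similarly for $y$,
\[
\sum_{(i,j)\in\mathcal H}|x_iA_{ij}y_j|\ \le\ \sum_{(s,t)}\frac{2^{s+t}}{4n}\,e(X_s,Y_t),
\]
the sum running over bucket pairs $(s,t)$ that contain a heavy couple; any such pair must satisfy $2^{s+t}\ge 4\sqrt{np}$, since otherwise $(x_iy_j)^2<p/n$ throughout $X_s\times Y_t$. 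This ``activity'' inequality, together with the $\ell^2$-type constraints $\sum_s 4^s\alpha_s\le 16n$ and $\sum_t 4^t\beta_t\le 16n$, is what produces all the cancellation.

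Next, for each active pair invoke the hypothesis: either \emph{(1)} $e(X_s,Y_t)\le c_1\mu(X_s,Y_t)=c_1p\,\alpha_s\beta_t$, or \emph{(2)} $e(X_s,Y_t)\log\frac{e(X_s,Y_t)}{\mu(X_s,Y_t)}\le c_2(\alpha_s\vee\beta_t)\log\frac{n}{\alpha_s\vee\beta_t}$; sort each active pair into ``bounded'' or ``discrepant'' accordingly (calling it bounded if both hold). The bounded pairs are handled cleanly: writing $2^{s+t}\alpha_s\beta_t=(4^s\alpha_s)(4^t\beta_t)/2^{s+t}$, then using $2^{s+t}\ge 4\sqrt{np}$ on active pairs, then the two $\ell^2$-constraints,
\[
\sum_{\text{bounded }(s,t)}\frac{2^{s+t}}{4n}\,e(X_s,Y_t)\ \le\ \frac{c_1p}{4n}\sum_{(s,t)\text{ active}}\frac{(4^s\alpha_s)(4^t\beta_t)}{2^{s+t}}\ \le\ \frac{c_1p}{4n}\cdot\frac{(16n)^2}{4\sqrt{np}}\ =\ 16c_1\sqrt{np}.
\]

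The discrepant pairs are the main obstacle, and here I would follow the bookkeeping of \cite{FO},\cite{HKP} closely. The point of the logarithmic shape of \emph{(2)} is that when $e(X_s,Y_t)\gg\mu(X_s,Y_t)$ the edge count is forced to be small; moreover, combining \emph{(2)} with the failure of \emph{(1)} (so $e(X_s,Y_t)>c_1\mu(X_s,Y_t)$) and dividing by $\alpha_s\vee\beta_t$ forces the \emph{smaller} of the two buckets in a discrepant pair to have size $O((\log n)/p)$, while the degree bound $e(X_s,Y_t)\le c_0np\,(\alpha_s\wedge\beta_t)$ (summing degrees over the smaller side) then caps $e(X_s,Y_t)$. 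One then splits the discrepant pairs once more --- for instance by whether $e(X_s,Y_t)\le\rho_0\,\mu(X_s,Y_t)$ for a fixed constant $\rho_0$ (these fold into the ``bounded'' estimate above with $c_1$ replaced by $\rho_0$) or $e(X_s,Y_t)>\rho_0\,\mu(X_s,Y_t)$ (so that $\log(e/\mu)\ge\log\rho_0$ is available in the denominator of \emph{(2)}) --- and bounds each weight $\frac{2^{s+t}}{4n}e(X_s,Y_t)$ using \emph{(2)}, the degree bound, and $\frac{2^{s+t}}{4n}\le 4/\sqrt{\alpha_s\beta_t}$, finally summing over the $O(\log n)$ relevant values of $s$ and $t$ and exploiting the geometric decay forced by $\sum_s 4^s\alpha_s\le 16n$ and by the activity bound $2^{s+t}\ge 4\sqrt{np}$. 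The delicate point --- and the step I expect to be hardest --- is making this last summation yield $O(\sqrt{np})$ rather than $O(\sqrt{np}\cdot\mathrm{polylog}\,n)$: it genuinely needs the full $1/\log(e/\mu)$ savings from \emph{(2)}, not just the crude bound $e(X_s,Y_t)=O((\alpha_s\vee\beta_t)\log n)$. Granting that (the content of Lemma~9.1 of \cite{HKP}), the total over discrepant pairs is $O(\sqrt{np})$ with constant depending only on $c_0,c_1,c_2$ --- in particular not on $x,y\in T$ --- and adding the two contributions gives the claim.
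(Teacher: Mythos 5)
The paper does not actually prove this lemma: it is quoted verbatim (in weakened form) from Lemma~9.1 of \cite{HKP}, which in turn is the ``heavy couples'' half of the Kahn--Szemer\'edi argument as formalized by Feige--Ofek \cite{FO}. So your decision to reconstruct that argument is the right route, and the parts you carry out are correct: the dyadic bucketing of $T$-vectors, the constraints $\sum_s 4^s\alpha_s\le 16n$ and $2^s\le 4\sqrt n$, the activity condition $2^{s+t}\ge 4\sqrt{np}$ for any bucket pair containing a heavy couple, and the complete $16c_1\sqrt{np}$ bound for the pairs satisfying the first hypothesis are all sound, as is your observation that failure of the first condition together with the second forces $\alpha_s\wedge\beta_t=O((\log n)/p)$ and that the max-degree hypothesis caps $e(X_s,Y_t)$ by $c_0np(\alpha_s\wedge\beta_t)$.

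However, as a self-contained proof your proposal has a genuine gap exactly where you say it does: the summation over discrepant pairs. That step is the entire technical content of the lemma --- in \cite{FO} it is a case analysis into roughly half a dozen subcases comparing $2^s\alpha_s$, $2^t\beta_t$, $e(X_s,Y_t)/\mu(X_s,Y_t)$, and $\sqrt{np}$, each exploiting the $1/\log(e/\mu)$ savings in a different regime to avoid the polylogarithmic loss you correctly identify as the danger --- and you do not carry it out; instead you ``grant'' it by citing Lemma~9.1 of \cite{HKP}, which is precisely the statement being proved. Since the paper itself offers only that citation, your treatment is no less complete than the paper's, and your partial re-derivation is arguably more informative; but if the goal was an independent proof, the discrepant-pairs bookkeeping must actually be executed (or the appeal to \cite{FO}/\cite{HKP} acknowledged as the proof, as the paper does), not both sketched and assumed.
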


\begin{lemma}\label{mainlemma}
Suppose that $p=\frac{\delta\log n}{n}$ for some fixed but arbitrary $\delta>0$, and $M\geq\frac{3}{5}(n/p)^{\frac{1}{2}}$. For any $s>0$, there are fixed constants $c_0,c_1,c_2>1$ so that the conditions of Lemma \ref{Conditions} hold for $\G$ with probability at least $1-o(n^{-s})$.  
\end{lemma}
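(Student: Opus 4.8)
The plan is to fix, for a given $s>0$, large enough constants $c_0, c_1, c_2 > 1$ and then bound, via a union bound over all pairs $(B,C)$ of vertex subsets, the probability that some pair violates \emph{both} bulleted conditions of Lemma \ref{Conditions}. First I would dispose of the maximum-degree hypothesis: since $\deg_\G(i)$ is a sum of $n-1$ independent $\Bernoulli(p)$ variables with mean $(n-1)p = \Theta(\log n)$, the Chernoff-type bound of Lemma \ref{Chernoff} (applied with $H$ the star at $i$ and a suitable constant $\varepsilon = c_0$) gives $\Pr[\deg_\G(i) > c_0 np] \leq E\exp(-\tfrac{c_0\log c_0}{3}(n-1)p)$; choosing $c_0$ large enough that $\tfrac{c_0 \log c_0}{3}\delta > s+1$ and union-bounding over $i \in [n]$ kills this event with probability $1 - o(n^{-s})$. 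The hypothesis $M \geq \tfrac35(n/p)^{1/2}$ is comfortably larger than the $\log\frac{(1-p)\varepsilon}{1-p\varepsilon}$ threshold needed to invoke Lemma \ref{Chernoff}, so this is legitimate.

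For the main event, I would stratify the pairs $(B,C)$ by the sizes $b := |B|$, $c := |C|$, and assume WLOG $b \leq c$ (so $|B|\vee|C| = c$). For fixed $b,c$ there are at most $\binom{n}{b}\binom{n}{c} \leq \big(\tfrac{en}{b}\big)^b\big(\tfrac{en}{c}\big)^c$ choices. For a fixed pair, $e(B,C) = \sum_{i\in B, j\in C} A_{ij}$ is a sum of (at most $bc$, counting each unordered edge at most twice) edge indicators, so Lemma \ref{Chernoff} applies with $H$ the relevant edge set and $|H| \leq bc$: for any $\lambda \geq 3$, $\Pr[e(B,C) \geq \lambda\mu(B,C)] \leq E\exp(-\tfrac{\lambda\log\lambda}{3}\,p bc)$ — again the $M$ threshold is met since $\log\frac{(1-p)\lambda}{1-p\lambda} \leq \log\lambda = O(\log\log n) \ll (n/p)^{1/2}$ once $\lambda$ is bounded (and in the regime where $\lambda$ grows we use it only up to the point where the tail is already negligible). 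Now observe the key dichotomy: if a pair violates the first bullet it has $e(B,C) > c_1 \mu(B,C)$, so set $\lambda := e(B,C)/\mu(B,C) > c_1$; if it also violates the second bullet then $e(B,C)\log\lambda > c_2\, c\log\tfrac{n}{c}$, i.e. $\lambda\log\lambda \cdot pbc > c_2\, c\log\tfrac nc$, so $\lambda\log\lambda > c_2 \log\tfrac nc/(pb)$. Plugging this into the exponent of the Chernoff bound makes $\tfrac{\lambda\log\lambda}{3}pbc > \tfrac{c_2}{3}\, c\log\tfrac nc$, which beats the entropy term $b\log\tfrac{en}{b} + c\log\tfrac{en}{c} \leq 2c\log\tfrac{en}{c} = 2c\log\tfrac nc + 2c$ provided $c_2 > 6$ (with a little room to absorb the additive $2c$ and the extra $-s$ factor). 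Summing the resulting geometric-type series over all $b \leq c$ and all $c \in [n]$ gives total failure probability $o(n^{-s})$, as desired.

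The step I expect to be the real obstacle is making the Chernoff bound of Lemma \ref{Chernoff} legitimately cover the full range of $\lambda = e(B,C)/\mu(B,C)$ that can arise: Lemma \ref{Chernoff} as stated requires $M \geq \log\frac{(1-p)\varepsilon}{1-p\varepsilon}$, and when $\varepsilon = \lambda$ is allowed to grow with $n$ this threshold $\approx \log\lambda$ must stay below $M = \tfrac35(n/p)^{1/2} = \Theta(\sqrt{n/\log n})$. Since $\lambda \leq 1/p = \Theta(n/\log n)$ always (as $e(B,C) \leq |H| \leq n^2$ and $\mu(B,C) = p|B||C|$, with care near the boundary), we get $\log\lambda = O(\log n) \ll M$, so the threshold is satisfied throughout; but verifying this uniformly — and checking that the $\varepsilon \geq 3$ hypothesis of Lemma \ref{Chernoff} is compatible with taking $c_1$ (hence $\lambda$) bounded below by a constant exceeding $3$ — is the bookkeeping that must be done carefully. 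Everything else is a standard Kahn–Szemerédi-style union bound with the entropy/concentration tradeoff balanced by the choice of $c_1, c_2$.
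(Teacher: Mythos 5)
There is a genuine gap in the regime where $|B|\vee|C|$ is close to $n$. Your union bound sums over \emph{all} $c\in[n]$ and uses only the second--bullet violation to get the tail bound $E\exp\bigl(-\tfrac{c_2}{3}c\log\tfrac nc\bigr)$; but the quantity $c\log\tfrac nc$ degenerates as $c\to n$ (it is exactly $0$ at $c=n$ and $O(1)$ for $c=n-O(1)$), while the number of choices of $B$ with $b\le c$ is still exponential (e.g.\ $b\approx n/2$ gives $\binom nb\approx 2^n$). So the claimed inequality ``the exponent $\tfrac{c_2}3 c\log\tfrac nc$ beats $b\log\tfrac{en}b+c\log\tfrac{en}c$ provided $c_2>6$'' is false once $\log\tfrac nc<1$: the additive $2c$ (and indeed the whole entropy term) cannot be absorbed into $c\log\tfrac nc$ for any fixed $c_2$ when $c\ge n/e$, and at $c=n$ your probability bound is just $E$. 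This is precisely why the paper splits off the case $|B|\vee|C|\ge n/e$ and handles it \emph{deterministically}: on the already--controlled event $\max_i\deg_\G(i)\le c_0np$ one has $e(B,C)\le c_0np(|B|\wedge|C|)$, hence $e(B,C)/\mu(B,C)\le c_0n/(|B|\vee|C|)\le ec_0$, so the \emph{first} bullet holds simultaneously for all such pairs with $c_1:=ec_0$, no union bound needed; the per--pair Chernoff/entropy computation is then carried out only for $|B|\vee|C|<n/e$, where $\log\tfrac n{|B|\vee|C|}\ge1$ and your bookkeeping (with $c_2$ of order $s$) goes through essentially as the paper does it ($c_2\ge 3s+21$). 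Your argument could perhaps be repaired differently (switching to the first--bullet tail $E\exp(-\tfrac{c_1\log c_1}{3}pbc)$ for large $c$ together with the sharper entropy bound $\binom nc=\binom n{n-c}$), but as written the large--$c$ range is not covered.

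Two smaller points. First, you apply Lemma \ref{Chernoff} with the random threshold $\varepsilon=\lambda=e(B,C)/\mu(B,C)$, which is not literally legitimate; the paper's fix is the one you implicitly need: by monotonicity of $x\log x$, violation of the second bullet is equivalent to $e(B,C)>r_1\mu(B,C)$ for the \emph{deterministic} $r_1$ solving $r_1\log r_1=c_2(|B|\vee|C|)\log\tfrac n{|B|\vee|C|}/\mu(B,C)$, and Chernoff is applied at $\varepsilon=r_1$ (with $c_1$ chosen $\ge3$ so the hypothesis $\varepsilon\ge3$ is met). Second, your worry about the $M$ threshold is a non--issue: as the paper remarks, Lemma \ref{Chernoff} only needs $M$ above a fixed constant here, so $M\ge\tfrac35(n/p)^{1/2}$ is overkill. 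Otherwise your treatment of the maximum degree matches the paper's.
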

In terms of probabilistic bounds, the proof of this will only rely on Lemma \ref{Chernoff}. We note then that the condition $M\geq\frac{3}{5}(n/p)^{\frac{1}{2}}$ is overkill since Lemma \ref{Chernoff} only ever requires that we have $M$ be greater than a fixed constant.  
\begin{proof}
Assume without loss of generality that $c_0\geq3$. 
First note that, due to the monotonicity of $x\log x$ for $x\geq1$, the second bulleted condition is equivalent to the statement $e(B,C)\leq r_1\mu(B,C)$ where $r_1\geq1$ solves $r_1\log r_1=\frac{c_2(|B|\vee|C|)\log\frac{n}{|B|\vee|C|}}{\mu(B,C)}$. So we can rewrite the two bulleted conditions as the single condition $e(B,C)\leq r\mu(B,C)$ where $r:=r_1\vee c_1$. 
By Lemma \ref{Chernoff} and a union bound, we have 
$$\Pr[\max_{i\in[n]}\deg_{\G}(i)>c_0np]\leq n\Pr[\deg_{\G}(n)\geq c_0np]\leq En\exp\left(-\frac{\delta c_0\log c_0}{3}\log n\right)$$
and 
$$\Pr[e(B,C)>r\mu(B,C)]\leq\Pr[e(B,C)>r_1\mu(B,C)]\leq E\exp\left(-\frac{c_2(|B|\vee|C|)\log\frac{n}{|B|\vee|C|}}{3}\right).$$
Thus we can choose a constant $c_0$ large enough to make 
$\Pr[\max_{i\in[n]}\deg_{\G}(i)>c_0np]=o(n^{-s})$. 
Using this fact and the resulting (high probability) inequality 
$$\frac{e(B,C)}{\mu(B,C)}\leq\frac{c_0(|B|\wedge|C|)np}{p|B||C|}=\frac{c_0n}{|B|\vee|C|},$$ 
we have, in the case $|B|\vee|C|\geq n/e$, that 
$$\Pr[\exists B,C\text{ s.t. }e(B,C)>ec_0\mu(B,C)\text{ and }|B|\vee|C|\geq n/e]\leq\Pr[\max_{i\in[n]}\deg_{\G}(i)>c_0np]=o(n^{-s}).$$
So suppose that $|B|\vee|C|<n/e$. By a union bound over all possible pairs $\{i,j\}\subset\left[\lfloor n/e\rfloor\right]$ ($i\leq j$, without loss of generality) of sizes for the sets $B,C$, it suffices to show that
\begin{align*}
&{n\choose i}{n\choose j} \exp\left(-\frac{c_2j\log\frac{n}{j}}{3}\right)\leq n^{-s-3}.  
\end{align*}
Recalling that ${n\choose j}\leq\left(\frac{ne}{j}\right)^j$ for all $j\in[n]$, this can be done by showing that  
$$(s+3)\log n+j\left(1+\log\frac{n}{j}\right)+i\left(1+\log\frac{n}{i}\right)\leq\frac{c_2}{3}j\log\frac{n}{j}$$
for all $1\leq i\leq j<n/e$. Indeed, since $j\log\frac{n}{j}$ is monotone increasing for $1\leq j<n/e$ (Lemma 2.12 of \cite{FO}), we have 
\begin{align*}
(s+3)\log n+j\left(1+\log\frac{n}{j}\right)+i\left(1+\log\frac{n}{i}\right)\leq(s+3)\log n+4j\log\frac{n}{j}\leq(s+7)j\log\frac{n}{j}. 
\end{align*}
Taking $c_2\geq3s+21$ therefore gives the desired bound. 
\end{proof}

\subsection{Link unions}
Let $T_1,T_2,\dots,T_m$ be jointly independent copies of $\mathcal{T}_{n,k-j}$ with $k$ fixed. Then, if $T_1',T_2'\dots,T_m'$ are jointly independent copies of $\mathcal{T}_{n+j,k}$, we can use Theorem \ref{Link} to couple these random trees so that 
\begin{align*}
L_m^{k,j}(n):&=\operatorname{Link}\left(\{n+1,\dots,n+j\},\bigcup_{i\in[m]}T_i'\right) \\ &=\bigcup_{i\in[m]}\operatorname{Link}\left(\{n+1,\dots,n+j\},T_i'\right) \\ 
&=\mathcal{Y}_{k-j}(n,p)\cup\bigcup_{i\in[m]}T_i,
\end{align*}
where $p:=1-\left(1-\frac{j}{n+j}\right)^m\sim\frac{jm}{n}$. Since each $T_i$ is determinantal, the set of $k$--faces in each $T_i$ are negatively associated (to be abbreviated NA). Moreover, the set of $k$--faces in $T:=\bigcup_{i\in[m]}T_i$ are also NA, as 
$$\one\{f\in T\}=1-\prod_{i\in[m]}(1-\one\{f\in T_i\})$$
is increasing in $T$ as a function $\mathscr{C}_{n,k}\to\R$, and any set of increasing functions defined on disjoint subsets of an NA set is itself an NA set \cite{JP}. By the same reasoning and the fact that sets of jointly independent random variables are NA sets, we also have that the set of $k-j$--faces of $L_m^{k,j}(n)$ is NA. We now narrow our focus to the case $j=k-1$, in which $$G:=L_m^{k,k-1}(n)$$ is a graph with negatively associated edges each appearing with probability $q:=1-(1-p)(1-\frac{2}{n})^m\sim\frac{(k+1)m}{n}$. This implies that $G$ is of type $\G\left(n,q,1,\infty\right)$. Toward applying Corollary \ref{redlap} to this $G$, we will assume that $m=\lceil\delta\log n\rceil$ for some arbitrary constant $\delta>0$. In order to get a sufficiently strong lower bound on the minimum degree of $G$, we need to have a very strong bound on the moment generating function of $\deg_G(n)$ for negative inputs. 
\begin{lemma}
Let $p_0:=1-\left(1-\frac{k-1}{n+k-1}\right)^m\left(1-\frac{1}{n}\right)^m\sim\frac{km}{n}$. Then for all $t<0$ we have  
\begin{align*}
\E\left[\exp\left(t\deg_G(n)\right)\right]&\leq e^{mt}\left(1+p_0(e^t-1)\right)^{n-1-m}\exp\left(\frac{m^2e^{1-t}}{2n}\right). 
\end{align*}
\end{lemma}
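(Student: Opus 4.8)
The plan is to realize $G$, together with the neighborhood of the vertex $n$ in it, through explicit independent families of Bernoulli variables, and then to condition on one distinguished neighbor of $n$ contributed by each of the $m$ spanning trees.

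By the coupling described above, $G$ has the law of $\mathcal Y_1(n,p)\cup\bigcup_{\ell=1}^m T_\ell$, where $T_1,\dots,T_m$ are independent copies of $\mathcal T_{n,1}$, i.e.\ independent uniform spanning trees of $K_n$ (recall $|\tilde H_0(T)|=1$ and $n^{\binom{n-2}1}=n^{n-2}$ is Cayley's count), and $1-p=(1-\tfrac{k-1}{n+k-1})^m$, with $\mathcal Y_1(n,p)$ independent of the trees. I would first record the classical fact (via Prüfer sequences) that for a uniform spanning tree $T$ of $K_n$ one has $\deg_T(n)-1\sim\Binomial(n-2,1/n)$, and that conditionally on this degree the neighborhood of $n$ is uniform among the subsets of $[n-1]$ of that size. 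Together these yield that the neighborhood $N_\ell$ of $n$ in $T_\ell$ has the same law as $\{v_\ell\}\cup\{i\in[n-1]\setminus\{v_\ell\}:\eta^{(\ell)}_i=1\}$, with $v_\ell$ uniform on $[n-1]$ and the $\eta^{(\ell)}_i$ i.i.d.\ $\Bernoulli(1/n)$, all jointly independent; similarly the neighborhood $N_0$ of $n$ in $\mathcal Y_1(n,p)$ equals $\{i\in[n-1]:\eta^{(0)}_i=1\}$ with i.i.d.\ $\Bernoulli(p)$ coordinates, independent of everything else. (One checks the identity for $N_\ell$ by verifying that $\Pr[N_\ell=S]$ depends only on $|S|$ and agrees with the uniform-spanning-tree value; the joint statement is immediate since the pieces $\mathcal Y_1(n,p),T_1,\dots,T_m$ are mutually independent.)

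Next I would set $V:=\{v_1,\dots,v_m\}$ and condition on $V$. Every $i\in V$ is automatically a neighbor of $n$ in $G$, while for $i\in[n-1]\setminus V$ one has $\{i,n\}\in G$ if and only if $\eta^{(\ell)}_i=1$ for some $\ell\in\{0,1,\dots,m\}$. Since these events depend on pairwise disjoint blocks of the $\eta$'s, which are moreover independent of $V$, the events $\{\{i,n\}\in G\}$ for $i\notin V$ are conditionally independent given $V$, each of conditional probability $1-(1-p)(1-1/n)^m=p_0$. Hence, given $V$, $\deg_G(n)$ is distributed as $|V|+\Binomial(n-1-|V|,p_0)$, so
$$\E\!\big[e^{t\deg_G(n)}\mid V\big]=e^{t|V|}\big(1+p_0(e^t-1)\big)^{\,n-1-|V|}.$$
Writing $|V|=m-C$ with $C\ge0$ the number of repeats among $v_1,\dots,v_m$, and using that $0\le1+p_0(e^t-1)\le1$ for $t<0$ (so $(1+p_0(e^t-1))^{C}\le1$), taking expectations gives
$$\E\!\big[e^{t\deg_G(n)}\big]\le e^{mt}\big(1+p_0(e^t-1)\big)^{\,n-1-m}\,\E\!\big[e^{-tC}\big].$$

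It then remains to bound $\E[e^{-tC}]=\E[e^{|t|C}]$, and here I would write $C=\sum_{\ell=1}^m Y_\ell$ with $Y_\ell:=\mathbf 1\{v_\ell\in\{v_1,\dots,v_{\ell-1}\}\}$ and peel off factors successively, using $\E[e^{|t|Y_\ell}\mid v_1,\dots,v_{\ell-1}]=1+(e^{|t|}-1)\tfrac{|\{v_1,\dots,v_{\ell-1}\}|}{n-1}\le1+(e^{|t|}-1)\tfrac{\ell-1}{n-1}$. Combining this with $1+x\le e^x$, $\sum_{\ell=1}^m(\ell-1)=\binom m2$, $e^{|t|}-1\le e^{|t|}=e^{-t}$ and $\tfrac{n}{n-1}\le e$ produces $\E[e^{|t|C}]\le\exp\!\big(\tfrac{m^2e^{1-t}}{2n}\big)$, which is exactly the asserted error factor, finishing the proof. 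The step carrying the real content — and the one to get right — is the realization: one must pin down the joint law of the neighborhood of $n$ in a uniform spanning tree precisely enough that exactly one guaranteed neighbor per tree can be split off, leaving independent $\Bernoulli(1/n)$ residuals, because it is the superposition of these with the $\Bernoulli(p)$ coordinates of $\mathcal Y_1(n,p)$ that produces the parameter $p_0$ (rather than the $q$ governing the unconditioned edge marginals); the strength of the $e^{mt}$ factor for large $|t|$ hinges on this. Everything after that is routine accounting for the collisions among the $m$ distinguished neighbors.
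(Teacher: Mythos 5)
Your proof is correct, and its core is the same as the paper's: you realize the link of $n$ in $G$ as the union of one uniformly chosen "guaranteed" neighbor per tree together with independent $\Bernoulli(1/n)$ residuals and the independent $\Bernoulli(p)$ coordinates of $\mathcal{Y}_1(n,p)$, condition on the set $V=\{v_1,\dots,v_m\}$ (the paper's $R$), obtain $\E[e^{t\deg_G(n)}\mid V]=e^{t|V|}\bigl(1+p_0(e^t-1)\bigr)^{n-1-|V|}$, and use $t<0$ to trade $|V|$ for $m$ at the cost of $\E[e^{-t(m-|V|)}]$. Two points differ. First, you justify the key realization of $\operatorname{Link}(n,\mathcal{T}_{n,1})$ as $\{v\}\cup\Binomial([n-1],1/n)$ via Pr\"ufer sequences and the generalized Cayley count (one checks $\Pr[N=S]=|S|(n-1)^{n-2-|S|}/n^{n-2}$ for both laws), whereas the paper simply quotes the $k=1$ case of Theorem \ref{main}; both are valid, the paper's route just keeps the argument self-contained within its own machinery. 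Second, and more substantively, your handling of the correction factor is a genuine shortcut: you bound $\E[e^{-tC}]$ (with $C=m-|V|$ the number of collisions) directly by the sequential conditioning estimate $\E[e^{-tY_\ell}\mid v_1,\dots,v_{\ell-1}]\le 1+(e^{-t}-1)\tfrac{\ell-1}{n-1}$ evaluated at the parameter $-t$ itself, then use $1+x\le e^x$, giving $\exp\bigl(\tfrac{m(m-1)(e^{-t}-1)}{2(n-1)}\bigr)\le\exp\bigl(\tfrac{m^2e^{1-t}}{2n}\bigr)$. The paper derives the very same sequential mgf bound but then detours through optimized Chernoff tail bounds $\Pr[|R_m|\le m-j]\le(em^2/(2jn))^j$ and a summation-by-parts reassembly to recover $\E[e^{-t(m-|R|)}]\le\exp(m^2e^{1-t}/(2n))$; your direct substitution reaches the same (in fact marginally sharper) factor with less work, so this step of yours is a clean simplification rather than a gap.
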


\begin{proof}
Let $\ell:=\operatorname{Link}(n,\cup_{i\in[m]}T_i)$, and let $B_i:=\one\{(i,n)\in\G(n,p)\}$, where $\G(n,p)$ is as above and, in particular, independent of $\ell$. Then 
\begin{align*}
\E\left[\exp\left(t\deg_G(n)\right)\big|\ell\right]
&=\prod_{i\in[n-1]}\E\left[\exp\left(t\left(\one\{i\in\ell\}+\one\{i\notin\ell\}B_i\right)\right)\big|\ell\right] \\ 
&=\prod_{i\in[n-1]}\left((1-p)\exp\left(t\one\{i\in\ell\}\right)+pe^t\right).
\end{align*}
Note that $\ell\lawequals\Binomial\left([n-1],p'\right)\cup R$
where $p':=1-(1-\frac{1}{n})^m$, and $R$ is a random subset of $[n-1]$ formed by independently and uniformly choosing (with replacement) $m$ vertices from $[n-1]$---this follows from the $k=1$ case of Theorem \ref{main}. Thus 
\begin{align*}
\E\left[\exp\left(t\deg_G(n)\right)\big|R\right] 
&=\prod_{i\in[n-1]}\E\left[(1-p)\exp\left(t\one\{i\in\ell\}\right)+pe^t\big|R\right]. 
\end{align*}
Now note that 
\begin{align*}
\E\left[\exp\left(t\one\{i\in\ell\}\right)|R\right]&=\E\left[\exp\left(t(\one\{i\in R\}+\one\{i\notin R\}\Bernoulli(p'))\right)|R\right] \\ 
&=\exp\left(t\one\{i\in R\}\right)\left(1-p'+p'\exp\left(t\one\{i\notin R\}\right)\right) \\ 
&=(1-p')\exp\left(t\one\{i\in R\}\right)+p'e^t.
\end{align*}
Thus 
\begin{align*}
\E\left[\exp\left(t\deg_G(n)\right)\big|R\right]&=\prod_{i\in[n-1]}\left((1-p)\left((1-p')\exp\left(t\one\{i\in R\}\right)+p'e^t\right)+pe^t\right) \\ 
&=\prod_{i\in[n-1]}\left((1-p_0)\exp\left(t\one\{i\in R\}\right)+p_0e^t\right). 
\end{align*}
We therefore have 
\begin{align*}
\E\left[\exp\left(t\deg_G(n)\right)\right]&=\sum_{j\in[m]}e^{jt}\left(1-p_0+p_0e^t\right)^{n-1-j}\Pr[|R|=j] \\ 
&=e^{mt}\left(1-p_0+p_0e^t\right)^{n-1-m}\sum_{j\in[m]}e^{(j-m)t}\left(1-p_0+p_0e^t\right)^{m-j}\Pr[|R|=j].
\end{align*}
Since $t<0$, we have $1-p_0+p_0e^t\leq1$, and thus 
\begin{align*}
\E\left[\exp\left(t\deg_G(n)\right)\right]\leq e^{mt}\left(1-p_0+p_0e^t\right)^{n-1-m}\sum_{j\in[m]}e^{-t(m-j)}\Pr[|R|=j].
\end{align*}

Toward controlling the size of $R$, let $R=R_m$, and let $R_j$ be defined as $R$ after only the first $j$ independent samplings of $[n-1]$. 
Then, letting $v_1,v_2,...,v_m$ be the random vertex selections, we have 
$$|R_{j+1}|=|R_j|+\one\{v_{j+1}\notin R_j\}.$$
Thus, for $s>0$, 
\begin{align*}
\E\left[\exp\left(-s|R_{j+1}|\right)\big|R_j\right]&=\exp\left(-s|R_j|\right)\left(\frac{|R_j|}{n-1}+\frac{n-1-|R_j|}{n-1}e^{-s}\right) \\ &\leq\exp\left(-s|R_j|\right)\left(\frac{j}{n-1}+\frac{n-1-j}{n-1}e^{-s}\right)=\exp\left(-s|R_j|-s\right)\left(1+\frac{j(e^s-1)}{n-1}\right).
\end{align*}
Taking expectations and iterating then gives 
$$\E\exp\left(-s|R_m|\right)\leq e^{-ms}\prod_{i=1}^{m-1}\left(1+\frac{i(e^s-1)}{n-1}\right).$$ 
Thus, by Chernoff's inequality, we have 
$$\Pr[|R_m|\leq m-j]\leq e^{-js}\prod_{i=1}^{m-1}\left(1+\frac{i(e^s-1)}{n-1}\right)\leq\exp\left(-js+{m\choose 2}\frac{e^s-1}{n-1}\right)$$
for any $s>0$. Taking $s=\log\frac{2j(n-1)}{m(m-1)}$, gives 
$$\Pr[|R_m|\leq m-j]\leq\exp\left(j-\frac{m(m-1)}{2(n-1)}\right)\left(\frac{m(m-1)}{2j(n-1)}\right)^j\leq\left(\frac{em^2}{2jn}\right)^j.$$
Thus $|R|\geq m-j+1$ with probability at least  $1-\left(\frac{em^2}{2jn}\right)^j$. Recall the summation by parts formula: 
$$\sum_{j\in[m]}f_jg_j=f_{m}\sum_{j\in[m]}g_j-\sum_{j\in[m]}(f_j-f_{j-1})\sum_{i\in[j-1]}g_i.$$
Setting $f_j=e^{-t(m-j)}$ and $g_j=\Pr[|R|=j]$, we have 
\begin{align*}
\sum_{j\in[m]}e^{-t(m-j)}\Pr[|R|=j]&=1+\left(e^{-t}-1\right)\sum_{j\in[m]}\Pr[|R|<j]e^{-t(m-j)} \\ 
&\leq1+\left(e^{-t}-1\right)\sum_{j\in[m]}\left(\frac{em^2}{2(m-j+1)n}\right)^{m-j+1}e^{-t(m-j)} \\ 
&=1+\left(1-e^t\right)\sum_{j\in[m]}\left(\frac{e^{1-t}m^2}{2jn}\right)^j \\ 
&\leq1+\left(1-e^t\right)\left(\exp\left(\frac{m^2e^{1-t}}{2n}\right)-1\right), 
\end{align*}
where the last inequality uses the bound $j^j\geq j!$. 
Thus, for $t<0$ we have 
\begin{align*}
\E\left[\exp\left(t\deg_G(n)\right)\right]&\leq e^{mt}\left(1-p_0+p_0e^t\right)^{n-1-m}\sum_{j\in[m]}e^{-t(m-j)}\Pr[|R|=j] \\ 
&\leq e^{mt}\left(1+p_0(e^t-1)\right)^{n-1-m}\left(1+(1-e^t)\left(\exp\left(\frac{m^2e^{1-t}}{2n}\right)-1\right)\right) \\ 
&\leq e^{mt}\left(1+p_0(e^t-1)\right)^{n-1-m}\exp\left(\frac{m^2e^{1-t}}{2n}\right). 
\end{align*}
\end{proof}

\begin{lemma}
For $0\leq j\leq m$, we have $\Pr[\min_{i\in[n]}\deg_G(i)\leq m-j]\leq(1+o(1))n^{1-j}e^{-km}$.
\end{lemma}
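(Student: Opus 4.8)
The plan is to turn this into a single--vertex lower--tail estimate via a union bound, and then estimate $\Pr[\deg_G(n)\le m-j]$ using the structural description of $\deg_G(n)$ obtained in the proof of the preceding lemma. For the reduction, note that the determinantal measure on $\mathcal{T}_{n+k-1,k}$ is invariant under all permutations of its vertex set $[n+k-1]$, and $G$ is the link of the \emph{fixed} face $\{n+1,\dots,n+k-1\}$ inside a union of jointly independent copies of $\mathcal{T}_{n+k-1,k}$; hence the law of $G$ is invariant under permutations of $[n]$, and in particular $\deg_G(i)\lawequals\deg_G(n)$ for every $i\in[n]$. A union bound then gives $\Pr[\min_{i\in[n]}\deg_G(i)\le m-j]\le n\,\Pr[\deg_G(n)\le m-j]$, so it suffices to prove $\Pr[\deg_G(n)\le m-j]\le(1+o(1))\,n^{-j}e^{-km}$.

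For the one--vertex estimate I would re--use the decomposition set up in the previous proof: conditionally on the (with--replacement) set $R$ of $m$ sampled vertices of $[n-1]$, one has $\deg_G(n)\lawequals|R|+\Binomial(n-1-|R|,p_0)$, and in particular $\deg_G(n)\ge|R|$. Thus the event $\{\deg_G(n)\le m-j\}$ forces $|R|\le m-j$, and conditioning on $|R|=r$,
\[
\Pr[\deg_G(n)\le m-j]=\sum_{r\le m-j}\Pr[|R|=r]\,\Pr\big[\Binomial(n-1-r,p_0)\le m-j-r\big].
\]
The term $r=m-j$ dominates: there the binomial must vanish, contributing $(1-p_0)^{\,n-1-(m-j)}$, and since $p_0\sim km/n$ with $m=\lceil\delta\log n\rceil$ (so $m^2=o(n)$) one gets $(1-p_0)^{\,n-1-(m-j)}=(1+o(1))e^{-km}$. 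Combining this with $\Pr[|R|=m-j]$, which up to a $(1+o(1))$ factor is the probability of exactly $j$ collisions among $m$ uniform samples (of order $n^{-j}$), and multiplying by the union factor $n$, produces a bound of the stated order.

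The main obstacle will be showing that the terms with $r<m-j$ are negligible: each additional collision costs another factor of order $m^2/n$ in $\Pr[|R|=r]$, while buying only a factor of order $km$ in the Chernoff bound for $\Pr[\Binomial(n-1-r,p_0)\le m-j-r]$, so consecutive terms shrink by a factor $O(km^3/n)=o(1)$ and their sum is $o(1)$ times the $r=m-j$ term; this geometric--decay estimate is the part that needs to be carried out carefully. As an alternative, more uniform route one could apply the moment generating function bound of the previous lemma directly, getting $\Pr[\deg_G(n)\le m-j]\le e^{tj}\big(1+p_0(e^t-1)\big)^{n-1-m}\exp\!\big(\tfrac{m^2e^{1-t}}{2n}\big)$ for every $t<0$ and optimising over $t$; the subtlety there is that pushing $e^{tj}$ down to order $n^{-j}$ forces $t\approx-\log n$, which inflates the correction factor $\exp\!\big(\tfrac{m^2e^{1-t}}{2n}\big)$, so one must choose $t$ to balance these two effects and then verify that the leftover factors are of lower order when $m=\Theta(\log n)$.
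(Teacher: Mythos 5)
Your exchangeability--plus--union--bound reduction is the same first step the paper takes, and your ``alternative route'' is in fact the paper's own proof: it applies a Chernoff bound to $\exp(t\deg_G(n))$ using the moment generating function estimate of the preceding lemma, with the specific choice $t=-r\log n$ for a \emph{fixed} $r\in(0,1)$, so that the correction factor $\exp(m^2e^{1-t}/(2n))=\exp(em^2/(2n^{1-r}))=1+o(1)$ and $(n-1-m)p_0(1-n^{-r})=km-o(1)$; this yields $(1+o(1))\,n^{1-jr}e^{-km}$ for each fixed $r<1$, and the stated exponent is then obtained by letting $r\to1$ at the end. So the ``subtlety'' you flag (that $t\approx-\log n$ inflates the correction term) is precisely where the paper's argument lives, and it is resolved not by a single optimized $t$ but by this limiting device.

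Your primary route, however, has a concrete quantitative gap: the probability that $m$ uniform samples from $[n-1]$ produce exactly $j$ coincidences is of order $(m^2/(2n))^j/j!$, not $n^{-j}$ --- each coincidence costs a factor $1/n$ but also buys a combinatorial factor of order $m^2$, and with $m=\lceil\delta\log n\rceil$ this is $n^{-j}(\log n)^{2j}$ up to constants. Hence the dominant term $r=m-j$ contributes about $(\delta^2\log^2 n/(2n))^j(j!)^{-1}e^{-km}$ to $\Pr[\deg_G(n)\le m-j]$, and after the union bound your route can only deliver bounds of the form $n^{1-j}(\log n)^{2j}e^{-km}$, missing the stated $(1+o(1))n^{1-j}e^{-km}$ by a polylogarithmic factor; the geometric decay of the terms with $r<m-j$ that you outline is fine (each extra coincidence costs $O(m^2/n)$ while relaxing the binomial event gains only $O(km)$), but it does not repair the order of the leading term. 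Note that your computation actually shows $n\Pr[\deg_G(n)\le m-j]$ is genuinely of this larger order, so the literal constant in the lemma is only reached through the paper's non-uniform $r\to1$ step; for the downstream application in Lemma \ref{forapp} either form is ample, since a $(\log n)^{O(j)}$ loss is absorbed by increasing $j$ by one. If you want the lemma exactly as stated, argue through the exponential moment bound with $t=-r\log n$ rather than through sharp asymptotics for $|R|$.
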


\begin{proof}
For any $t>0$, we have by a union bound that 
\begin{align*}
\Pr[\min_{i\in[n]}\deg_G(i)\leq m-j]&\leq n\Pr[\exp\left(-t\deg_G(n)\right)\geq e^{(j-m)t}] \\ 
&\leq ne^{-jt}\left(1+p_0(e^{-t}-1)\right)^{n-1-m}\exp\left(\frac{m^2e^{1+t}}{2n}\right) \\ 
&\leq\exp\left(\log n-jt+(n-1-m)p_0(e^{-t}-1)+\frac{m^2e^{1+t}}{2n}\right). 
\end{align*}
Letting $t=r\log n$ for any $r\in(0,1)$ gives 
\begin{align*}
\Pr[\min_{i\in[n]}\deg_G(i)\leq m-j]&\leq\exp\left((1-jr)\log n-(n-1-m)p_0(1-n^{-r})+\frac{em^2}{2n^{1-r}}\right) \\ &\leq(1+o(1))\exp\left((1-jr)\log n-km\right).
\end{align*}
Taking $r\to1$ from the left then gives $(1+o(1))n^{1-j}e^{-km}$. 
\end{proof}

\begin{lemma}\label{forapp}
Let $L$ be the reduced Laplacian of $G$ as defined above with $m=\lceil\delta\log n\rceil$ and $\delta>0$ an arbitrary constant (so that $\Pr[\min_{i\in[n]}\deg_G(i)\geq m-j+2]=1-o(n^{-j})$ for any fixed $j$). Then for any fixed $s$, we have $\lambda_1(L)=0$ and 
$\lambda_2(L)=1-O\left(1/\sqrt{\log n}\right)$
with probability $1-o(n^{-s})$. 
\end{lemma}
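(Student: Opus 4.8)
The plan is to read the statement off from Corollary~\ref{redlap}, applied to $G=L_m^{k,k-1}(n)$ with $m=\lceil\delta\log n\rceil$, after checking that corollary's hypotheses. Recall from the discussion above that $G$ is a graph on $[n]$ whose edge indicators are negatively associated, each edge being present with probability $q=1-(1-p)(1-2/n)^m\sim\frac{(k+1)\delta\log n}{n}$. Since $x\mapsto e^{tx}$ is increasing for $t\geq0$, negative association gives $\E\exp\big(\sum_{i<j}t_{ij}A_{ij}\big)\leq\prod_{i<j}\big(1-q+qe^{t_{ij}}\big)$ for every entrywise-nonnegative $t$, so $G$ is of type $\G(n,q,1,\infty)$. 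In particular $nq=\Theta(\log n)$ and $M=\infty\geq\tfrac35(n/q)^{1/2}$. A quick inspection shows that the proofs of Proposition~\ref{gspectrum} and of Lemmas~\ref{Light}, \ref{Conditions}, \ref{mainlemma} use only $nq=\Theta(\log n)$ rather than the literal value $\tfrac{\delta\log n}{n}$ --- the arbitrary constant permitted there absorbs the factor $k+1$ --- so Proposition~\ref{gspectrum}, hence Corollary~\ref{redlap}, is available for $G$. (Also $G$ contains a copy of $\mathcal{T}_{n,1}$, which is a spanning tree and so connected almost surely; thus $\lambda_1(L)=0$ has multiplicity one and there is no need to condition on connectivity.)

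It remains to hand Corollary~\ref{redlap} a minimum-degree threshold. We may assume $s$ is a positive integer, since $o(n^{-\lceil s\rceil})\subseteq o(n^{-s})$. Put $m_0:=\lceil\delta\log n\rceil-s+2$; this is a positive integer with $m_0\sim\delta\log n$, hence $m_0\geq\sqrt{\log n}$ for all large $n$. Applying the minimum-degree bound stated in the hypothesis of this lemma with $j=s$ gives $\Pr[\min_{i\in[n]}\deg_G(i)\geq m_0]=1-o(n^{-s})$. Therefore Corollary~\ref{redlap} applies to the reduced Laplacian $L$ of $G$ with its parameter ``$m$'' set equal to $m_0$, and it yields: with probability $1-o(n^{-s})$, $\lambda_1(L)=0$ and $\lambda_2(L)=1-O\big(\sqrt{\log n}/m_0\big)$. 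Since $m_0=\Theta(\log n)$, we get $\sqrt{\log n}/m_0=O(1/\sqrt{\log n})$, which is the claimed bound.

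I expect the only real difficulty to be bookkeeping rather than mathematics: reconciling the rigid template $\G\big(n,\tfrac{\delta\log n}{n},O(1),\tfrac{3n}{5\sqrt{\delta\log n}}\big)$ appearing in the statement of Corollary~\ref{redlap} with the actual parameters of $G$ (edge probability $q\sim\frac{(k+1)\delta\log n}{n}$, infinite $M$), and keeping the two roles of ``$m$'' --- the number of superimposed determinantal trees versus the minimum-degree threshold fed to Corollary~\ref{redlap} --- from colliding. All the genuine probabilistic content (the moment-generating-function control of $\deg_G(n)$ and the Kahn--Szemer\'edi argument behind Proposition~\ref{gspectrum}) has already been carried out in the preceding lemmas, so nothing new is needed here.
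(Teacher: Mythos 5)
Your proposal is correct and follows essentially the same route as the paper: the paper's proof is a one-line citation of the preceding minimum-degree lemma together with Corollary~\ref{redlap}, with the fact that $G$ is of type $\G(n,q,1,\infty)$ (via negative association) and that the constant $k+1$ is absorbed into the arbitrary $\delta$ already recorded in the discussion preceding the lemma. Your version merely makes the bookkeeping (choice of the min-degree threshold $m_0$, the two roles of $m$, and $M=\infty$) explicit, which matches the intended argument.
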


\begin{proof}
This follows from the previous lemma and Corollary \ref{redlap}. 
\end{proof}

\newpage 

\printbibliography[heading=bibliography]

\end{document}